\documentclass[leqno,11pt,a4paper]{amsart}
\usepackage[full]{textcomp}
\usepackage{newpxtext}
\usepackage{cabin} 
\usepackage[varqu,varl]{inconsolata} 
\usepackage[bigdelims,vvarbb]{newpxmath} 
\usepackage[cal=cm,bb=esstix,bbscaled=1.05]{mathalfa} 
\usepackage[margin=2.4cm]{geometry}
\usepackage{savesym}

\usepackage{mathabx}
\usepackage[usenames,dvipsnames]{color}
\usepackage{hyperref}
\hypersetup{
 colorlinks,
 linkcolor={red!50!black},
 citecolor={blue!50!black},
 urlcolor={blue!80!black}
}
\usepackage{tikz}
\usetikzlibrary{cd}
\usepackage{graphicx}
\usepackage[all,cmtip]{xy}
\usepackage{mleftright}
\usepackage{booktabs}
\usepackage{cite}
\usepackage{enumitem}
\usepackage{mathdots}

\setlist[enumerate]{labelsep=*, leftmargin=1.5pc}
\setlist[enumerate]{label=\normalfont(\roman*), ref=\roman*}
\newtheorem{thm}{Theorem}[section]
\newtheorem{lemma}[thm]{Lemma}
\newtheorem{cor}[thm]{Corollary}
\newtheorem{prop}[thm]{Proposition}

\newtheorem{conjecture}[thm]{Conjecture}
\theoremstyle{definition}
\newtheorem{example}[thm]{Example}
\newtheorem{remark}[thm]{Remark}
\newtheorem{definition}[thm]{Definition}
\newtheorem{question}[thm]{Question}

\newtheorem{conventions}[thm]{Conventions} 
\numberwithin{equation}{section}
\newcommand{\Z}{\mathbb{Z}}
\newcommand{\Q}{\mathbb{Q}}
\newcommand{\R}{\mathbb{R}}
\newcommand{\C}{\mathbb{C}}

\newcommand{\cN}{\mathcal{N}}

\newcommand{\bK}{\mathbb{K}}

\newcommand{\mO}{\mathcal{O}}

\DeclareMathOperator{\GL}{GL}
\newcommand{\pr}{\mathbb{P}}

\newcommand{\intr}[1]{{#1}^\circ}

\renewcommand{\dim}[1]{\operatorname{dim}\mleft({#1}\mright)}

\renewcommand{\min}[1]{\operatorname{min}\mleft\{{#1}\mright\}}

\newcommand{\on}{\operatorname}
\newcommand{\ol}{\overline}
\newcommand{\wt}{\widetilde}

\newcommand{\mat}{\left(\begin{array}}
\newcommand{\tam}{\end{array}\right)}
\newcommand{\NE}{\ol{\text{\textnormal{NE}}}}

\newcommand{\calg}{c^\text{\textnormal{alg}}}
\newcommand{\cech}{c^\text{\textnormal{ECH}}}

\newcommand{\lf}{\lfloor}
\newcommand{\rf}{\rfloor}
\newcommand{\lc}{\left\lceil}
\newcommand{\rc}{\right\rceil}
\newcommand{\ip}{\text{\textnormal{IP}}}
\newcommand{\neb}{\ol{\on{NE}}}
\newcommand{\SW}{\on{SW}}
\renewcommand{\P}{\mathbb{P}}
\newcommand{\Ind}{\on{I}}
\newcommand{\CZ}{\on{CZ}}
\newcommand{\ECH}{\on{ECH}}

\newcommand{\Gr}{\on{Gr}}

\definecolor{dark green}{rgb}{0.0, 0.6, 0.0}

\graphicspath{{images/}}
\begin{document}
\author[J.~Chaidez]{J.~Chaidez}
\address{Department of Mathematics\\University of California at Berkeley\\Berkeley, CA\\94720\\USA}
\email{jchaidez@berkeley.edu}
\author[B.\,Wormleighton]{B.~Wormleighton}
\address{Department of Mathematics\\Washington University in St.~Louis\\St.~Louis, MO\\63130\\USA}
\email{benw@wustl.edu}
\title[ECH Embedding Obstructions For Rational Surfaces]{ECH Embedding Obstructions For Rational Surfaces}
\maketitle

\begin{abstract}
Let $(Y,A)$ be a smooth rational surface or a possibly singular toric surface with ample divisor $A$. We show that a family of ECH-based, algebro-geometric invariants $\calg_k(Y,A)$ proposed in \cite{bwo} obstruct symplectic embeddings into $Y$. Precisely, if $(X,\omega_X)$ is a $4$-dimensional star-shaped domain and $\omega_Y$ is a symplectic form Poincar\'e dual to $[A]$ then
\[(X,\omega_X)\text{ embeds into }(Y,\omega_Y)\text{ symplectically } \implies \cech_k(X,\omega_X) \le \calg_k(Y,A)\]
We give three applications to toric embedding problems: (1) these obstructions are sharp for embeddings of concave toric domains into toric surfaces; (2) the Gromov width and several generalizations are monotonic with respect to inclusion of moment polygons of smooth (and many singular) toric surfaces; and (3) the Gromov width of such a toric surface is bounded by the lattice width of its moment polygon, addressing a conjecture of Averkov--Hofscheier--Nill in \cite{ahn}.
\end{abstract}

\section{Introduction} \label{sec:intro} A symplectic embedding of symplectic manifolds $(X,\omega) \to (X',\omega')$ of the same dimension is a smooth embedding $\varphi:X \to X'$ that intertwines the symplectic form, i.e. $\varphi^*\omega' = \omega$. The study of symplectic embeddings has been a major topic in symplectic geometry ever since Gromov proved his eponymous non-squeezing theorem, stating that
\[ B^{2n}(r) \text{ symplectically embeds into } B^2(R) \times \C^{n-1} \quad \iff \quad r \le R\]
Symplectic capacities provide the primary tool for obstructing symplectic embeddings. Roughly speaking, a symplectic capacity $c$ is a numerical invariant associated to a symplectic manifold (usually in a restricted class, e.g. exact) such that $c(X) \le c(X')$ whenever $X$ symplectically embeds into $X'$. The most famous example is the \emph{Gromov width} of $X$, defined by
\begin{equation} c_G(X) := \sup\{\pi r^2 \; : \; B(r) \text{ symplectically embeds into }X\}\end{equation}
Capacities like $c_G$ have been used to great effect to provide complete solutions to many symplectic embedding problems.

\vspace{4pt}

One family of capacities that have been applied with particular success in dimension 4 are the \emph{ECH capacities} $\cech_k$ (one for each integer $k \ge 1$) introduced by Hutchings in \cite{mh4}. These capacities are defined using embedded contact homology (or ECH for short), a version of Floer homology for contact $3$-manifolds with a deep connection to Seiberg-Witten theory. They also provide sharp embedding obstructions for several 4-dimensional symplectic embedding problems, such as ellipsoids into ellipsoids \cite{md} and (more generally) of concave toric domains into convex toric domains \cite{cg}. This paper is about symplectic embedding obstructions derived using ECH.

\subsection{ECH capacities via algebraic geometry} Our present story begins with the work of Wormleighton (the second author of this paper) in \cite{bwo}, which we now review in some detail.

\vspace{4pt}

Recall that a \emph{toric domain} $X_\Omega$ is the inverse image $\mu^{-1}(\Omega)$ of a compact subset $\Omega \subset [0,\infty)^2$ with open interior under the standard moment map on $\C^2$.
\[\mu:\C^2 \to \R^2 \qquad (z_1,z_2) \mapsto (\pi |z_1|^2,\pi |z_2|^2)\]
The region $\Omega$ is called the \emph{moment image}. A toric domain $X_\Omega$ is \emph{convex} if $\Omega = K \cap [0,\infty)^2$ where $K \subset \R^2$ is a convex set and $0 \in K$. Likewise, $X_\Omega$ is \emph{concave} if $\Omega = C \cap [0,\infty)^2$ where $\R^2 \setminus C$ is convex and $0 \in C$. Finally, a \emph{rational} toric domain is a convex toric domain where $\Omega$ is the convex hull of finitely many rational points in $[0,\infty)^2$.

\vspace{4pt}

The ECH capacities of toric domains have been studied extensively (c.f. \cite{mh2,mh3,ccfhr,cg}). For rational toric domains, the ECH capacities can be combinatorially computed using the moment polytope $\Omega$, and these computations bear a remarkable resemblance to calculations arising in the algebraic geometry of $\Q$-line bundles over toric surfaces. This observation was first leveraged (for ellipsoids) in the work of Cristofaro-Gardiner--Kleinman \cite{cgk}. In \cite{bwo}, Wormleighton formalized it as a theorem.

\vspace{4pt}

To state this theorem we observe that, given a moment polytope $\Omega$, there is in addition to $X_\Omega$, an associated projective algebraic surface $Y_\Omega$ described by the inner normal fan of $\Omega$. This surface can be singular, and may alternately be viewed as a toric, symplectic orbifold with moment polytope $\Omega$. It comes equipped with a canonical ample $\R$-divisor $A_\Omega$ on $Y_\Omega$. 

\begin{thm}[{\cite[Thm.~1.5]{bwo}}] \label{thm:bwo_main} Let $X_\Omega$ be a rational toric domain and $(Y_\Omega,A_\Omega)$ be the corresponding polarized toric surface. Then
\begin{equation} \label{eqn:bwo_main} \cech_k(X_\Omega) = \inf_{D\in\on{nef}(Y_\Omega)_\Q}\{D\cdot A_\Omega:h^0(D) \ge k + 1\}\end{equation}
\end{thm}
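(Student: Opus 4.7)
My plan is to translate both sides of \eqref{eqn:bwo_main} into the combinatorics of convex rational polygons in $\R^2$, then verify that the two infima coincide. The dictionary on each side is standard; the matching is what requires care.

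On the algebro-geometric side, I apply the standard toric dictionary: a nef $\Q$-divisor $D$ on $Y_\Omega$ corresponds to a rational convex polygon $P_D \subset \R^2$ whose inner normal fan is refined by that of $\Omega$. Under this correspondence, Demazure vanishing on $Y_\Omega$ gives $h^0(D) = |P_D \cap \Z^2|$, and the intersection number admits the closed form
\begin{equation*}
D \cdot A_\Omega \;=\; \sum_{e \in \on{edges}(\Omega)} h_{P_D}(n_e) \, |e|_\Z,
\end{equation*}
where $n_e$ is the primitive inner normal of $e$, $|e|_\Z$ is its lattice length, and $h_{P_D}$ is the support function of $P_D$. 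The right-hand side of \eqref{eqn:bwo_main} thus becomes the infimum of this weighted perimeter over rational convex polygons $P$ whose normal fan is refined by that of $\Omega$ and satisfying $|P \cap \Z^2| \ge k+1$.

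On the symplectic side, I would invoke the combinatorial formulas for ECH capacities of rational convex toric domains established by Choi--Cristofaro-Gardiner--Frenkel--Hutchings--Ramos in \cite{ccfhr} (see also \cite{mh2,mh3}), which compute $\cech_k(X_\Omega)$ as a minimum of an $\Omega$-weighted length over convex lattice paths enclosing at least $k+1$ lattice points. After closing a path into a polygon $P$ and collecting edges, this length is precisely the same sum $\sum_e h_P(n_e)\,|e|_\Z$ as above, but taken over \emph{all} convex lattice polygons, not just those whose normal fan is refined by $\Omega$'s. To reconcile the two infima, one replaces a general minimizing $P$ by the larger polygon $P' := \bigcap_{e} \{x : \langle x, n_e \rangle \le h_P(n_e)\}$, where $e$ ranges over edges of $\Omega$: this $P'$ has normal fan refined by $\Omega$'s, equal support values $h_{P'}(n_e) = h_P(n_e)$ for each such $e$ (so the functional is unchanged), and $P' \supseteq P$ so $|P' \cap \Z^2| \ge |P \cap \Z^2|$.

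The subtlety I expect to be the main obstacle is the passage between integer lattice polygons (natural on the ECH side) and rational polygons parametrizing $\on{nef}(Y_\Omega)_\Q$ (natural on the algebro-geometric side). One must verify that allowing rational scalings does not strictly decrease the infimum. This follows from homogeneity of $\sum_e h_P(n_e)\,|e|_\Z$ (degree $1$ in $P$) together with an asymptotic Ehrhart-type count of $|nP \cap \Z^2|$ as $n \to \infty$, combined with the density of rationals; alternatively, one can argue directly that the infimum on the algebro-geometric side is attained at a $\Q$-point whose denominator clears to give a lattice polygon realizing the ECH bound.
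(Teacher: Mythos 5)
Your proposal follows essentially the same route as the proof this paper relies on: Theorem \ref{thm:bwo_main} is imported from \cite{bwo}, whose argument the paper itself describes as ``largely combinatorial,'' amounting to checking that the two quantities agree using previously known explicit formulas --- precisely your strategy of matching the lattice-path formula for $\cech_k$ of convex toric domains against the polytope dictionary for nef divisors (your $P\mapsto P'$ reconciliation of the two normal-fan conditions included). The one step to tighten is the $\Q$-versus-$\Z$ passage at the end: the clean argument is to round down (replace $P_D$ by the convex hull of its lattice points, cf.\ Lemma \ref{lem:round_down_divisor}), which preserves the section count and can only decrease the weighted perimeter, whereas the Ehrhart-asymptotic and denominator-clearing arguments you sketch both scale the polygon up and hence increase the area functional.
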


\noindent Here the infimum is over all nef $\Q$-divisors in $Y_\Omega$. For the more symplectically minded reader, a nef divisor may be thought of as a homology class that is represented by a disconnected $J$-curve, and which has non-negative intersection with any other $J$-curve. For example, in $\P^2$ this is every non-negative multiple of the hyperplane class $[\P^1]$, while in $\P^1 \times \P^1$ this is every non-negative combination of $[\P^1 \times \text{pt}]$ and $[\text{pt} \times \P^1]$.

 Theorem \ref{thm:bwo_main} allows one to leverage the computational tools developed for toric geometry to perform calculations, and implies a number of nice results about the asymptotics of the ECH capacities as $k \to \infty$. See \cite{bwo} for more results.

\subsection{Geometric explanation} \label{subsec:intuitive_argument} The proof of Theorem \ref{thm:bwo_main} in \cite{bwo} is largely combinatorial, and amounts to checking that the two quantities agree using previously known explicit formulas. Thus, it is natural to wonder if there is some deeper geometric phenomenon at play. We now sketch a heuristic argument suggesting that this is indeed the case. 

\vspace{4pt}

To start, given a moment polytope $\Omega$, we observe that the surface with divisor $(Y_\Omega,A_\Omega)$ and domain $X_\Omega$ are related. Indeed, the interior $X^\circ_\Omega$ of $X_\Omega$ and the complement $Y_\Omega \setminus A_\Omega$ are equivariantly symplectomorphic and one can write down a ``collapsing map'' $\pi:\partial X_\Omega \to A_\Omega$ whose fibers are generically circles. If $Y_\Omega$ is smooth, we can (roughly speaking) write 
\begin{equation} \label{eqn:Y_X_N_splitting} Y_\Omega = X_\Omega \cup_Z N_\Omega\end{equation}
where $N_\Omega$ is a very thin neighborhood of $A_\Omega$ and $Z$ is the boundary of $N_\Omega$. Thus we have the following picture.

\vspace{-10pt}
\begin{figure}[h]
\centering
\caption{The relationship between $Y_\Omega$ and $X_\Omega$.}
\vspace{10pt}
\includegraphics[width=.8\textwidth]{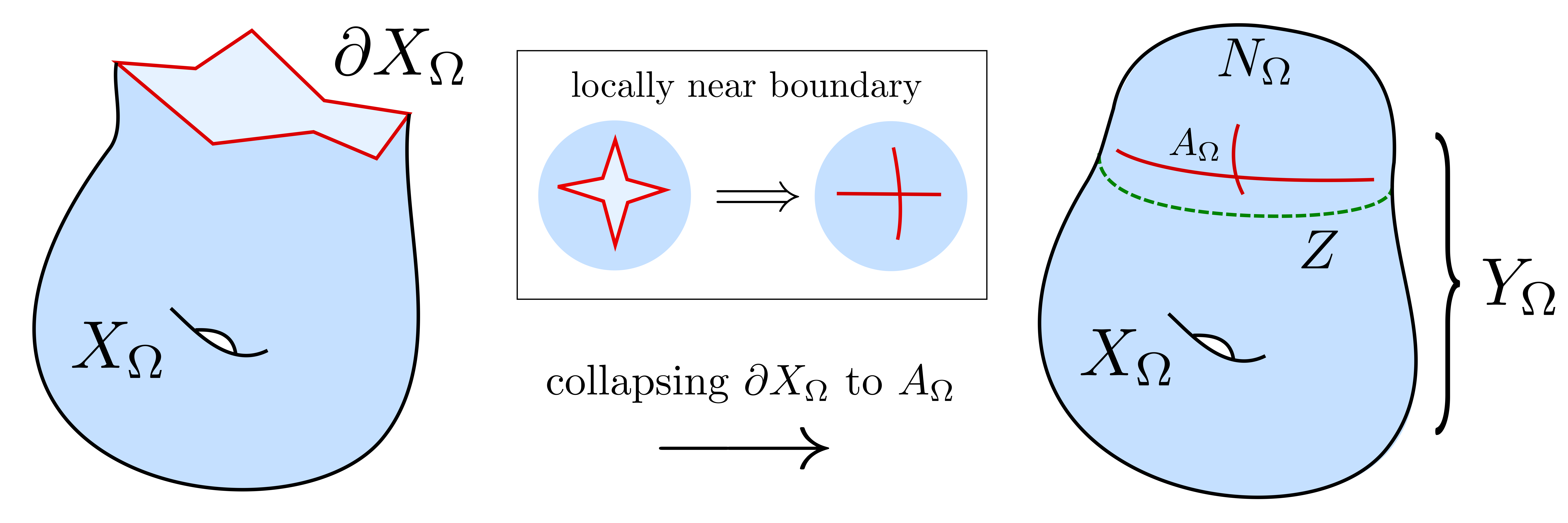}
\label{fig:Xomega_and_Yomega}
\end{figure}

Now we return to a discussion of capacities. Dissecting the construction of $\cech_k$, we find that the 1st ECH capacity of $X_\Omega$ is (again, roughly speaking) computed as the minimum area of certain disconnected holomorphic curves $u$ in $\widehat{Z} = \R \times Z$ satisfying some conditions. First, each component $C$ of $u$ is embedded, cylindrical at $\pm\infty$ and comes with an integer weight $n_C \in \Z_+$. Second, $u$ must pass through a point $p \in \widehat{Z}$ (fixed for all $u$). The $k$th ECH capacity of $X_\Omega$ is given by sequences $u_i$ of $k$ such curves with matching ends at $\pm \infty$.

\vspace{4pt}

One way that sequences $u_i$ of this form arise naturally is by neck stretching $Y_\Omega$ along the hypersurface $Z$. Namely, a disconnected curve $D \subset Y_\Omega$ with embedded components that is equipped with integer weights on its components and that passes through $k$ generic points in $Y_\Omega$ will (if it survives the stretching process) produce a sequence $u_i$ as above.

\vspace{-10pt}
\begin{figure}[h]
\centering
\caption{Neck stretching divisors to acquire ECH curves.}
\vspace{10pt}
\includegraphics[width=.8\textwidth]{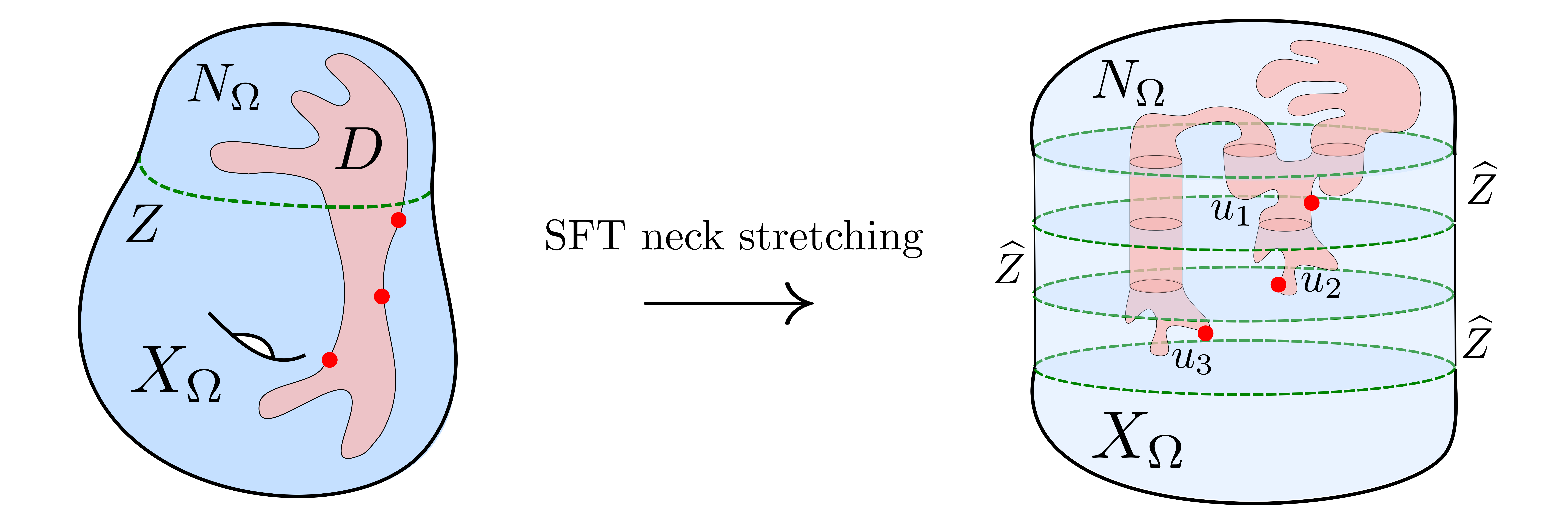}
\label{fig:intro_neck_stretching}
\end{figure}

\noindent The curve $D$ is essentially an effective, integral Weil divisor. If $D$ passes through $k$ points, then we expect the moduli of divisors $\mathcal{M}_D$ in the class of $D$ to satisfy $\dim{\mathcal{M}_D} \ge 2k$. Furthermore, the area of $D$ in $Y_\Omega$ is given by $A_\Omega \cdot D$ since $A_\Omega$ is Poincare dual to the Kahler form on $Y_\Omega$.

\vspace{4pt}

The above discussion leads us to expect an inequality of the following form, which strongly resembles one direction of the equality (\ref{eqn:bwo_main}).
\[
\cech_k(X_\Omega) \le \text{min}\{ A_\Omega \cdot D \; | \;\text{effective divisors $D$ with $\dim{\mathcal{M}_D} \ge 2k$ + more (?))}\}
\] 
Note that, in the above discussion, we did not reference the fact that $X_\Omega$ and $Y_\Omega$ arose via toric geometry or that $X_\Omega = Y_\Omega \setminus A_\Omega$. In fact, the entire argument seems sensible if $(Y,A)$ is an arbitrary projective surface with ample divisor and $X \subset Y$ is an embedded exact symplectic sub-domain.

\begin{remark} A more precise perspective on the curve $D$ in $Y_\Omega$ is that it arises in the moduli space count used to define the \emph{Gromov-Taubes invariant} of a symplectic 4-manifold \cite{tb,md2}. This neck stretching phenomenon is, morally speaking, the reason that ECH is the Floer theory categorifying the Gromov-Taubes invariants. 

In practice, this fact is formalized using the isomorphism of ECH with a variant of Seiberg-Witten-Floer homology \cite{tb2}, and the equivalent of the Gromov-Taubes invariants with the Seiberg-Witten invariants \cite{tb3}. In order to make the discussion of this section (\S \ref{subsec:intuitive_argument}) rigorous, we will make use of these equivalences via a result of Hutchings (see Theorem \ref{thm:ECH_as_SFT} in \S \ref{subsec:ECH_to_SW}).
\end{remark}

\subsection{Main results} \label{subsec:main_results} We are now ready to state the main theorem of this paper, which formalizes the discussion of \S \ref{subsec:intuitive_argument}. First we recall the notion of algebraic capacity from \cite{bwo,bwt,bwr}.

\begin{definition}[Definition \ref{def:alg_cap}] The \emph{$k$th algebraic capacity} $\calg_k(Y,A)$ of a rational projective surface $Y$ with ample $\R$-divisor $A$ is 
\[\calg_k(Y,A):=\inf_{D\in\on{Nef}(Y)_\Z}\{D\cdot A:\chi(D)\geq k+\chi(\mO_Y)\}\]
Here $\on{Nef}(Y)_\Z$ denotes the set of nef $\Z$-divisors on $Y$.
\end{definition}

Recall that a \emph{star-shaped domain} $X \subset \C^2$ is a codimension $0$ sub-manifold with boundary possessing a point $p \in X$ with the property that any other point $q \in X$ is connected to $p$ by a line segment in $X$. We do not require $X$ to have smooth boundary.

\begin{thm} (Theorem \ref{thm:main}) \label{thm:main_intro} Let $X \to Y$ be a symplectic embedding of a star-shaped domain $X$ into a smooth rational projective surface $(Y,\omega_A)$ with a ample $\R$-divisor $A$ with $[\omega_A] = \text{PD}[A]$. Then
\begin{equation} \label{eqn:main} \tag{$\star$}
\cech_k(X)\leq\calg_k(Y,A)
\end{equation}
\end{thm}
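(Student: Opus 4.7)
The plan is to make the heuristic argument of \S\ref{subsec:intuitive_argument} precise. Fix a nef integer divisor $D$ on $Y$ with $\chi(D) \geq k + \chi(\mO_Y) = k+1$; it suffices to prove $\cech_k(X) \leq A \cdot D$, since taking the infimum over such $D$ then gives the theorem. Because $Y$ is a smooth rational surface and $D$ is nef, Kawamata--Viehweg vanishing forces $h^i(Y,D) = 0$ for $i \geq 1$, so $h^0(D) \geq k+1$ and the complete linear system $|D|$ contains a divisor through any prescribed $k$-tuple of points. After a $C^0$-small perturbation of $X$, justified by $C^0$-continuity of ECH capacities on star-shaped domains, one may assume $\partial X = Z$ is a smooth contact-type hypersurface, and then choose $k$ marked points $p_1,\ldots,p_k$ in the interior of $X$.

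Next I would realize $[D]$ by a pseudoholomorphic curve adapted to neck-stretching. Equip $Y$ with a Kahler form $\omega_A$ Poincare dual to $A$ and an $\omega_A$-tame almost complex structure $J$ that is cylindrical in a collar of $Z$. The equivalence between the algebraic count of effective divisors in class $[D]$ through $k$ generic points and the Gromov--Taubes invariant, combined with Taubes's $\on{SW} = \Gr$ correspondence, produces for generic such $J$ an embedded (possibly disconnected and multiply weighted) $J$-holomorphic curve $C \subset Y$ in class $[D]$ passing through $p_1,\ldots,p_k$, of symplectic area $\int_C \omega_A = A \cdot D$.

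I would then stretch the neck of $J$ along $Z$ and invoke SFT compactness to extract from $C$ a holomorphic building. The level living in the symplectic completion of $X$ is a punctured $J$-curve meeting all $k$ marked points whose positive asymptotics form an ECH orbit set on $Z$ of total action at most $A \cdot D$. Applying Theorem~\ref{thm:ECH_as_SFT} of Hutchings then converts this SFT data into the existence of an ECH generator certifying the bound $\cech_k(X) \leq A \cdot D$, and the theorem follows by taking the infimum over admissible $D$.

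The principal obstacle I anticipate is the neck-stretching step: one must ensure that in the SFT limit (i) all $k$ point constraints survive on the $X$-side rather than being absorbed by bubbles in $Y \setminus X$, (ii) component multiplicities transfer correctly so that the resulting ECH orbit set has action exactly $A \cdot D$ rather than strictly less, and (iii) the resulting generator is nonzero in the precise ECH class used to define $\cech_k$. Rationality of $Y$ together with the $\on{SW} = \Gr$ existence theorem keep the algebraic side transparent, so essentially all of the technical work occurs on the symplectic side and is exactly what Theorem~\ref{thm:ECH_as_SFT} is designed to package.
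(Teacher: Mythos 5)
Your outline is the heuristic of \S\ref{subsec:intuitive_argument}, but it leaves open exactly the point that the paper has to work to establish: why a curve in class $[D]$ \emph{survives} neck-stretching. The existence of effective algebraic divisors in $|D|$ through $k$ points does not imply that the Gromov--Taubes invariant $\Gr_Y([D])$ is nonzero -- the integrable $J$ is not generic, the count is signed, and since a rational surface has $b^+ = 1$ the invariant is chamber-dependent. What is actually needed is $\SW_Y([D]) = 1 \bmod 2$ in the symplectic chamber, and the paper proves this in Proposition \ref{prop:rational_surfaces}: every effective class with $I \ge 0$ on a smooth rational surface is Seiberg--Witten nonzero, via the existence of a psc metric (so $\SW^-$ vanishes) together with the Li--Liu wall-crossing formula $\SW = \SW^- \pm 1$. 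Since nef integral divisors lie in $\neb(Y)$ and $\chi(D) \ge k+1$ is equivalent to $I(D) \ge 2k$ by Noether's formula, this places every admissible nef $D$ in $\SW(Y)$, which is the easy inequality (\ref{eqn:inf_SW_vs_NEF}) of Theorem \ref{thm:sw_nef}. Your appeal to Kawamata--Viehweg and to $h^0(D) \ge k+1$ is a detour that does not substitute for this step (and KV vanishing requires nef \emph{and big}, which you do not have).

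The second issue is that the obstacles you flag at the end -- that the point constraints survive on the $X$-side, that the limit represents the right relative class, and that the resulting element is nonzero in the right ECH group -- are not resolved by a direct SFT compactness argument in the paper, and resolving them by hand would be genuinely hard. Instead, Proposition \ref{prop:embedding_ECH_bound} runs the argument entirely through the formal structure of Theorem \ref{thm:ECH_as_SFT}: axiom (d) identifies the composition $\ECH(X;[0]) \circ U^k \circ \ECH(N;[S])[\emptyset]$ with $\sum \SW_Y([C])$ over classes with prescribed restrictions; since $X$ is a ball, $H_2(Y) \to H_2(X,\partial X)$ is zero and $H_2(Y) \to H_2(N,\partial N)$ is an isomorphism, so the sum has the single term $\SW_Y([\Sigma]) = 1 \bmod 2$ and the composition is nonzero. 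A separate grading computation (using axiom (a) and the additivity of the ECH index) shows the image lies in $\ECH_0(S^3;[0]) = \Z/2[\emptyset]$, forcing $U^k\iota_{A+\epsilon}\eta = [\emptyset]$ and hence $\cech_k(X) \le \langle\omega_A,[\Sigma]\rangle = A\cdot D$. So your skeleton is right, but the two load-bearing ingredients -- the wall-crossing computation of $\SW(Y)$ for rational surfaces, and the replacement of ad hoc neck-stretching by the Seiberg--Witten composition axiom plus a grading argument -- are missing.
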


\begin{remark} Methods of algebraic geometry have been applied extensively to symplectic embedding problems for rational and toric surfaces, and our result is just one more perspective on this story. We refer the reader to the work of McDuff \cite{md3}, McDuff-Polterovich \cite{mp}, Anjos-Lalonde-Pinsonnault \cite{alp}, Casals-Vianna \cite{cv} and Christofaro-Gardiner-Holm-Mandini-Pires \cite{chmp} for just a few examples. Likewise, rationality is a key assumption in many embedding results (even those that use purely symplectic methods). See, for example, the work of Buse-Hind \cite{bh} and Opshtein \cite{op}. Note that our references here are not at all exhaustive. \end{remark}

\begin{remark} The formula (\ref{eqn:main}) provides a new computational tool for studying the ECH capacities of star shaped domains living within divisor complements. Indeed, the nef cones of surfaces are very well studied and many structural results exist which may be brought to bear while studying $\cech$ via Theorem \ref{thm:main}. Furthermore, the nef cone is often polyhedral, and thus methods from convex optimization can be utilised to compute $\calg$. We hope to explore the combinatoral and computational implications of (\ref{eqn:main}) in future work.\end{remark}

\vspace{4pt}

Although we were originally motivated to prove Theorem \ref{thm:main_intro} in order to study non-toric surfaces, many interesting implications appear even in the toric setting. In particular, \cite[Thm.~1.5]{bwo} implies that the inequality in Theorem \ref{thm:main_intro} is an equality for certain divisor complements, and this is key to our applications. We will now discuss the three results on symplectic embeddings into smooth toric surfaces that we will prove. 

\vspace{4pt}

For our first application, we prove that these obstructions are sharp for embeddings of concave toric domains into toric surfaces.

\begin{thm} \label{thm:app_1} (Theorem \ref{thm:concave_to_tor_surface}) Let $X_\Delta$ be a concave toric domain with interior $X^\circ_\Delta \subset X_\Delta$, and let $(Y_\Omega,A_\Omega)$ be a smooth toric surface. Then
\[X^\circ_\Delta \text{ symplectically embeds into }Y_\Omega \quad \iff \quad \cech_k(X_\Delta) \le \calg_k(Y_\Omega,A_\Omega)\]
\end{thm}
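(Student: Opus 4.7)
The plan is to prove the two directions separately: the forward direction follows from Theorem \ref{thm:main_intro} after a scaling argument, and the backward direction reduces to the known sharpness result of Cristofaro-Gardiner for embeddings of concave into convex toric domains.

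For the forward implication, suppose $\varphi \colon X_\Delta^\circ \hookrightarrow Y_\Omega$ is a symplectic embedding. Since $X_\Delta$ is concave, it is star-shaped at the origin, and so is each rescaling $\lambda X_\Delta$ for $\lambda \in (0,1)$. Each $\lambda X_\Delta$ sits compactly inside $X_\Delta^\circ$ and therefore embeds into $Y_\Omega$ via $\varphi$. Applying Theorem \ref{thm:main_intro} gives $\cech_k(\lambda X_\Delta) \le \calg_k(Y_\Omega, A_\Omega)$, and the conformal rescaling identity $\cech_k(\lambda X_\Delta) = \lambda^2 \cech_k(X_\Delta)$ lets us send $\lambda \nearrow 1$ to conclude.

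For the backward implication, the goal is to identify $\calg_k(Y_\Omega, A_\Omega)$ with the ECH capacity $\cech_k(X_\Omega)$ of the convex rational toric domain $X_\Omega$ associated to the moment polygon $\Omega$ of $Y_\Omega$. Since $Y_\Omega$ is smooth, Demazure vanishing gives $h^i(D) = 0$ for $i > 0$ and any nef divisor $D$, so $h^0(D) = \chi(D)$, matching the cohomological condition in Definition \ref{def:alg_cap} with the one in Theorem \ref{thm:bwo_main}. After an argument to pass between the $\Q$-divisor infimum of Theorem \ref{thm:bwo_main} and the $\Z$-divisor infimum defining $\calg_k$, the hypothesis becomes $\cech_k(X_\Delta) \le \cech_k(X_\Omega)$ for all $k$. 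By the theorem of Cristofaro-Gardiner in \cite{cg}, this family of inequalities is equivalent to the existence of a symplectic embedding $X_\Delta^\circ \hookrightarrow X_\Omega^\circ$. Composing with the standard symplectomorphism $X_\Omega^\circ \cong Y_\Omega \setminus A_\Omega$ recalled in \S\ref{subsec:intuitive_argument} yields the desired embedding $X_\Delta^\circ \hookrightarrow Y_\Omega$.

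The principal obstacle is the identification $\calg_k(Y_\Omega, A_\Omega) = \cech_k(X_\Omega)$ for smooth toric $(Y_\Omega, A_\Omega)$, since the formulas in Definition \ref{def:alg_cap} and Theorem \ref{thm:bwo_main} involve different cohomological invariants and different divisor integrality conventions. Once this equality is in hand, the proof is a clean assembly of Theorem \ref{thm:main_intro} (providing the obstruction), the Cristofaro-Gardiner sharpness result (providing the embedding), and the toric symplectomorphism $X_\Omega^\circ \cong Y_\Omega \setminus A_\Omega$ (linking the two target spaces).
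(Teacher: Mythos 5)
Your proposal is correct and follows essentially the same route as the paper: the forward direction is the paper's exhaustion of $X^\circ_\Delta$ by star-shaped domains (you just use the explicit rescaling exhaustion plus the scaling property of $\cech_k$), and the backward direction is exactly the paper's combination of the identification $\calg_k(Y_\Omega,A_\Omega)=\cech_k(X_\Omega)$ (axiom (e) of Theorem \ref{thm:axioms_of_calg}, proved there via \cite{bwo}, Demazure vanishing, and rounding nef $\Q$-divisors to nef $\Z$-divisors) with Cristofaro-Gardiner's sharpness theorem and the compactification symplectomorphism $X_\Omega^\circ \simeq Y_\Omega\setminus\on{supp}(A_\Omega)$. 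The only detail you elide is the preliminary scaling and $\Z$-affine normalization putting $\Omega$ into the position of a convex domain polygon in $[0,\infty)^2$ (which uses a smooth vertex and axiom (a)); the paper carries this out explicitly, but it is routine.
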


\noindent This result uses a similar result of Christofaro-Gardiner in \cite{cg}, for embeddings of concave domains into convex domains. Theorem \ref{thm:app_1} essentially shows that the extra freedom provided by gluing the divisor $A_\Omega$ into $X^\circ_\Omega$ makes no difference for embeddings of concave domains.

\vspace{4pt}

For our next application, we prove the following result that includes a folk conjecture about the Gromov width. Let $\Xi$ be the moment polygon of a concave toric domain and define the $\Xi$\emph{-width} by
$$c_\Xi(X):=\sup\{r:X_{r\Xi}\text{ symplectically embeds in $X$}\}$$
When $\Xi$ is the triangle with vertices $(0,0),(1,0),(0,1)$ the $\Xi$-width $c_\Xi$ is just the Gromov width $c_G$.

\begin{thm} \label{thm:app_2} (Corollary \ref{co:gromov_monotonicity} + Corollary \ref{cor:Xi_width_monotonicity}) Let $\Xi$ be the moment polygon of a concave toric domain. Suppose $\Omega \subset \Delta$ is an inclusion of moment polytopes of smooth toric projective surfaces. Then
\[c_\Xi(Y_\Omega) \le c_\Xi(Y_\Delta)\]
In particular, the Gromov widths satisfy
\[c_G(Y_\Omega) \le c_G(Y_\Delta)\]
\end{thm}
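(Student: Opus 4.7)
The plan is to convert the geometric monotonicity into an algebraic one via Theorem \ref{thm:app_1}, and then prove the algebraic statement by a toric inclusion argument. First, for each smooth toric surface $Y$ appearing in the hypothesis, Theorem \ref{thm:app_1} yields the characterization
\[
c_\Xi(Y)=\sup\bigl\{r>0 : \cech_k(X_{r\Xi})\le\calg_k(Y,A)\text{ for all }k\ge 1\bigr\},
\]
where I use that $X_{r\Xi}$ and $X^\circ_{r\Xi}$ differ only by a measure-zero boundary, so that the closed and open versions of the embedding problem produce the same supremum. Consequently it suffices to establish the algebraic monotonicity
\[
\calg_k(Y_\Omega,A_\Omega)\le\calg_k(Y_\Delta,A_\Delta) \quad\text{for all } k\ge 1. \quad(\star)
\]

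To prove $(\star)$, I would use Theorem \ref{thm:bwo_main} to identify each algebraic capacity with an ECH capacity of a toric domain. Place the moment polytopes so that $\Omega\subset\Delta\subset[0,\infty)^2$ and both are realized as rational toric domains in the sense of Section \ref{sec:intro} (for instance by translating so that a common reference vertex of $\Delta$ sits at the origin; since $\calg_k$ depends only on the intrinsic pair $(Y,A)$ up to isomorphism, the choice of representative in $\R^2$ does not affect $(\star)$). Then $X_\Omega=\mu^{-1}(\Omega)\subset\mu^{-1}(\Delta)=X_\Delta$ as subsets of $\C^2$, and monotonicity of ECH capacities under symplectic inclusion gives $\cech_k(X_\Omega)\le\cech_k(X_\Delta)$. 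Combined with the identifications $\calg_k(Y_\Omega,A_\Omega)=\cech_k(X_\Omega)$ and $\calg_k(Y_\Delta,A_\Delta)=\cech_k(X_\Delta)$ from Theorem \ref{thm:bwo_main}, this yields $(\star)$ and hence $\Xi$-width monotonicity. The Gromov width statement is recovered by specializing to $\Xi=\{(x,y):x,y\ge 0,\ x+y\le 1\}$, the moment polygon of the unit ball $B^4(1)$ (a concave toric domain), since $c_G=c_\Xi$ for this $\Xi$.

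The subtlest step I expect is arranging a simultaneous placement of $\Omega$ and $\Delta$ in $[0,\infty)^2$ as \emph{rational} toric domains (i.e.\ both containing the origin), which is what the ECH identification in Theorem \ref{thm:bwo_main} requires. If $\Omega$ and $\Delta$ share no common vertex, one cannot simultaneously put a vertex of each at the origin; a fallback is to prove $(\star)$ purely algebraically by passing to a common smooth toric refinement $\tilde{Y}$ of $Y_\Omega$ and $Y_\Delta$, showing that $\pi_\Delta^{*}A_\Delta-\pi_\Omega^{*}A_\Omega$ is effective on $\tilde{Y}$ after suitably normalizing support functions, and invoking birational invariance of $\calg_k$ under toric blow-downs via a polytope/Riemann--Roch pushforward argument on nef divisors.
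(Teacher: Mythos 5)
Your overall strategy is the same as the paper's: reduce $\Xi$-width monotonicity to the algebraic monotonicity $\calg_k(Y_\Omega,A_\Omega)\le\calg_k(Y_\Delta,A_\Delta)$ via the sharpness result for concave domains (Theorem \ref{thm:app_1}), and prove the algebraic inequality by identifying each $\calg_k$ with an ECH capacity of a toric domain and invoking monotonicity of $\cech_k$ under inclusion. This is exactly how the paper proceeds (the algebraic inequality is axiom (b) of Theorem \ref{thm:axioms_of_calg}, and the corollaries chain it with the embedding axiom and Theorem \ref{thm:concave_to_tor_surface}).

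The one genuine issue is the step you yourself flag: your primary argument for $(\star)$ requires placing a vertex of \emph{both} $\Omega$ and $\Delta$ at the origin so that both become convex toric domains in the sense of Theorem \ref{thm:bwo_main}, and this is impossible when they share no vertex — which is the generic situation. Your proposed fallback (common toric resolution, effectivity of the difference of pullbacks) is not carried out and is considerably heavier than necessary. The paper's fix is much simpler: translate both polytopes into the \emph{open} positive quadrant $(0,\infty)^2$, so that $X_\Omega\subset X_\Delta$ are \emph{free} toric domains (disjoint from the axes), and use the fact that the identity $\cech_k(X_\Omega)=\calg_k(Y_\Omega,A_\Omega)$ holds for free polytopes as well as for domain polytopes — this is Theorem 4.18 of \cite{bwo}, packaged in the paper as axiom (e) of Theorem \ref{thm:axioms_of_calg}. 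No common vertex is needed, since the free version of the correspondence does not require the origin to lie in the moment image. With that substitution your argument closes; as written, relying only on the convex-domain version of Theorem \ref{thm:bwo_main}, it does not. The remaining open-versus-closed bookkeeping in your reduction (replacing $X_{r\Xi}$ by $X^\circ_{r\Xi}$ and exhausting by star-shaped domains) matches what the paper does and is fine.
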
 

\noindent In fact, we prove Theorem \ref{thm:app_2} (and Theorem \ref{thm:app_1}) for all projective surfaces (even singular ones) that possess a smooth fixed point. Note that any smooth symplectic toric $4$-manifold is a smooth projective toric surface (c.f. \cite{et}) so Theorem \ref{thm:app_2} may be stated in those terms as well.

\begin{remark} There have been previous results (c.f. \cite[Thm. 1.2]{chmp}) indicating that a ball (or more generally, ellipsoid) embeds into a toric domain if and only if it embeds into the corresponding toric surface. These results are related to Theorem \ref{thm:app_2}, and can actually be used to recover some cases. See \S \ref{subsec:emb_to_toric} for more discussion. \end{remark}

\vspace{4pt}

Finally, we prove an estimate of the Gromov width of a toric surface in terms of the lattice width of its moment polygon. This result is \cite[Conjecture 3.12]{ahn}.

\begin{definition} The \emph{lattice width} $w(\Omega)$ of a moment polytope is defined by
\[w(\Omega) := \underset{l \in \Z^n \setminus 0}{\text{min}}\Big( \underset{p,q \in \Omega}{\text{max}} \; \langle l,p - q\rangle\Big)\]
\end{definition}

\begin{thm} \label{thm:app_3} (Corollary \ref{cor:gromov_combo_bound}) Let $\Omega$ be a moment polygon with a smooth vertex. Then
$$c_G(Y_\Omega) \le w(\Omega)$$
In particular, this holds when $\Omega$ is Delzant or, equivalently, when the toric surface $Y_\Omega$ is smooth.
\end{thm}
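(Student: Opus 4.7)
The plan is to contain $\Omega$ in a Delzant rectangle $\Omega'$ whose associated toric surface is $\P^1\times\P^1$ of known Gromov width, and then apply the monotonicity of the Gromov width (Theorem \ref{thm:app_2}) in the extended form that allows the smaller surface to be singular with merely a smooth fixed point (which is exactly the hypothesis of the theorem).

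Choose a primitive lattice vector $l\in\Z^2$ realizing the lattice width, so that $\max_{p,q\in\Omega}\langle l,p-q\rangle = w(\Omega)$; such a primitive $l$ exists because widths scale linearly under scaling of $l$, so the minimum is attained on primitive vectors and only finitely many primitive vectors give width below any fixed bound. Extend $l$ to a $\Z$-basis $\{l,m\}$ of $\Z^2$. The $SL(2,\Z)$ change of variables sending $(l,m)\mapsto(e_2,e_1)$, together with a translation, preserves lattice widths, smoothness of vertices, and the associated toric surface up to isomorphism; after it, we may assume $\Omega\subset [0,a]\times [0,w]$, where $w := w(\Omega)$ and $a := \max_{p,q\in\Omega}\langle e_1,p-q\rangle$. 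By minimality of $w$ over primitive lattice directions, $a\ge w$.

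Set $\Omega' := [0,a]\times[0,w]$. Since $\Omega'$ is Delzant, $Y_{\Omega'}\cong \P^1\times\P^1$ is smooth, and $\Omega\subset\Omega'$. Invoking the extension of Theorem \ref{thm:app_2} to polygons $\Omega$ with a smooth vertex, we obtain $c_G(Y_\Omega)\le c_G(Y_{\Omega'})$. The polarization $A_{\Omega'}$ gives $\P^1\times\P^1$ the product symplectic form with factor areas $a$ and $w$, whose Gromov width is classically $\min(a,w) = w$. Equivalently, this value can be read off directly from Theorem \ref{thm:main_intro} applied with the nef divisor $D_{e_1}$ (a fiber of the first projection), which satisfies $\chi(D_{e_1}) = 2$ and $D_{e_1}\cdot A_{\Omega'} = w$, so $\calg_1(Y_{\Omega'},A_{\Omega'}) = w$. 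Combining the inequalities, $c_G(Y_\Omega)\le w(\Omega)$.

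The main obstacle is verifying that the monotonicity $c_G(Y_\Omega)\le c_G(Y_{\Omega'})$ remains valid when $Y_\Omega$ is singular with only a smooth fixed point, rather than Delzant; this is exactly the scope of the generalization of Theorem \ref{thm:app_2} highlighted in the remark following that theorem. Once that extension is granted, the argument reduces to the elementary observation that any polygon is trapped between two parallel lattice hyperplanes of distance $w(\Omega)$ apart, and hence sits inside a Delzant rectangle whose short side realizes the lattice width.
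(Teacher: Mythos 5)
Your proposal is correct and follows essentially the same route as the paper: reduce by a $\GL_2(\Z)$ change of coordinates so the lattice width is achieved along a coordinate direction, trap $\Omega$ in a lattice rectangle whose short side is $w(\Omega)$, apply the monotonicity of Corollary \ref{co:gromov_monotonicity} (which, as stated, already covers polygons with merely a smooth vertex), and evaluate the Gromov width of the resulting $\P^1\times\P^1$. The only cosmetic difference is that the paper takes the rectangle's long side to be very large rather than the exact bounding box, and your observation that $a\ge w$ by minimality makes your version equally valid.
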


\noindent Theorem \ref{thm:app_3} follows from Theorem \ref{thm:app_2} and a rigorous version of a heuristic argument from \cite{ahn}. 

\begin{remark} The assumption that the moment polytope has a smooth vertex in Theorems \ref{thm:app_1}, \ref{thm:app_2} and \ref{thm:app_3} is an technical assumption that may be removable with different methods. \end{remark}

\subsection{Future directions} \label{subsec:future_directions} There are a number of interesting research directions along the lines of \cite{bwo} and this paper that are worth exploring. We will comment on these now.

\vspace{4pt}

First, Theorem \ref{thm:bwo_main} in \cite{bwo} gives an equality for the ECH capacities, and it is natural to ask when Theorem \ref{thm:main} can be upgraded to an equality as well. Here is a guess along those lines. 

\begin{conjecture}[ECH of divisor complements] \label{conj:equality} Let $(Y,\omega_A)$ be a rational projective surface with an ample $\R$-divisor $A$ such that $\on{sing}(Y) \subseteq\on{supp}(A)$ and suppose $Y\setminus\on{supp}(A)$ is deformation-equivalent to a ball. Then,
\[\cech_k(Y\setminus\on{supp}(A)) = \calg_k(Y,A)\]
\end{conjecture}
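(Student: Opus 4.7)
The direction $\cech_k(Y\setminus\on{supp}(A))\leq\calg_k(Y,A)$ is an essentially immediate application of Theorem \ref{thm:main_intro}: the hypothesis that $Y\setminus\on{supp}(A)$ is deformation-equivalent to a ball, combined with $\on{sing}(Y)\subseteq\on{supp}(A)$, lets us regard $Y\setminus\on{supp}(A)$ (after choosing a compatible Liouville structure) as a symplectic embedding of a star-shaped domain into the smooth locus of $Y$, to which the main theorem applies. So the content of the conjecture lies in the reverse inequality $\calg_k(Y,A)\leq\cech_k(Y\setminus\on{supp}(A))$, and this is where my effort would go.

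My plan is to invert the neck-stretching heuristic of \S\ref{subsec:intuitive_argument}. Fix $\eps>0$. First, I would use the ECH-to-SW correspondence of Theorem \ref{thm:ECH_as_SFT} to extract an ECH generator $\alpha$ on $\bdry{(Y\setminus\on{supp}(A))}$ with $\Ind(\alpha)=2k$ and action at most $\cech_k(Y\setminus\on{supp}(A))+\eps$, then feed it into Seiberg--Witten/Gromov--Taubes theory on $Y$ with $k$ generic marked points to produce a $J$-holomorphic representative $D\subset Y$ of some effective class $[D]\in H_2(Y;\Z)$ passing through those points. The neck-stretched limit of $D$ should match $\alpha$ at infinity, which would give $A\cdot D\leq\cech_k(Y\setminus\on{supp}(A))+\eps$; the ECH index constraint, via the index formula and adjunction on $Y$, should translate into $D^2-K_Y\cdot D\geq 2k$, whence $\chi(D)\geq k+\chi(\mO_Y)$ by Riemann--Roch, provided $h^1(D)=h^2(D)=0$.

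The main obstacle, I expect, is that the $D$ produced this way is only \emph{a priori} effective, whereas $\calg_k$ minimises over nef $\Z$-divisors. My first attempt would be to pass to the Zariski decomposition $D=P+N$: the positive part $P$ is nef, and $P\cdot A\leq D\cdot A$ because $N$ is effective and $A$ is ample, so I would only need to verify that $\chi(P)\geq k+\chi(\mO_Y)$, which should follow if the $k$ generic marked points force the relevant curves to avoid the fixed negative locus of $|D|$. Failing that, an alternative would be to degenerate $(Y,A)$ to a nearby toric pair $(Y_\Omega,A_\Omega)$ in a one-parameter family, argue upper-semicontinuity of both $\cech_k$ and $\calg_k$, and reduce to the known equality of Theorem \ref{thm:bwo_main}. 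The cohomology vanishing $h^1(D)=h^2(D)=0$ needed for Riemann--Roch is secondary but nontrivial, and would likely have to come from nefness (via Kawamata--Viehweg) or from an \emph{a priori} moduli-theoretic estimate.
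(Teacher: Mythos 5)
The statement you are trying to prove is stated in the paper as Conjecture \ref{conj:equality}; the paper offers no proof of it, and indeed remarks that a proof ``would require either a clever argument for packing $X$ or a very delicate understanding of the ECH and Reeb dynamics of smoothings of $X$.'' So there is nothing in the paper to compare your argument against except the upper bound $\cech_k(Y\setminus\on{supp}(A))\leq\calg_k(Y,A)$, which (modulo an exhaustion of the open set $Y\setminus\on{supp}(A)$ by compact star-shaped domains, and a resolution argument to handle the singularities of $Y$, neither of which is automatic from Theorem \ref{thm:main} as stated for smooth $Y$) is the direction you correctly identify as the easy one. Your sketch of the reverse inequality is a research plan, not a proof, and it has a genuine gap at its core.

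The gap is this: all of the Seiberg--Witten/neck-stretching machinery in the paper runs in one direction only. Proposition \ref{prop:embedding_ECH_bound} produces, from an SW-nonzero class $[\Sigma]$, a filtered ECH class $\eta$ of action at most $\langle\omega,[\Sigma]\rangle$ with $U^k\iota\,\eta=[\emptyset]$, hence an \emph{upper} bound on $c_k$. To get the lower bound $\calg_k\leq\cech_k$ you must go the other way: from the class realizing $c_k$ (which is defined intrinsically on $\partial X$ and need not arise as the image of anything under a cobordism map from $Y$) you must manufacture a nef $\Z$-divisor $D$ with $\chi(D)\geq k+\chi(\mO_Y)$ and $A\cdot D\leq \cech_k+\eps$. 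Even granting that Gromov--Taubes hands you a $J$-curve in some class $[\Sigma]$, the identity $\langle\omega,[\Sigma]\rangle=\rho[S]+\rho[\Sigma\cap N]$ only gives $\rho[S]\leq\langle\omega,[\Sigma]\rangle$; controlling the area $\rho[\Sigma\cap N]$ of the part of the curve living in the collar around $\on{supp}(A)$, and showing it can be made arbitrarily small for the \emph{minimizing} class, is precisely the delicate smoothing/Reeb-dynamics problem the authors flag, and your sketch does not address it. Secondary but also real: in your Zariski decomposition step, $h^0(P)=h^0(D)$ is standard but $\chi(P)\geq k+\chi(\mO_Y)$ does not follow without a vanishing theorem (the paper's own passage from effective to nef classes, via the isoparametric transform and Lemma \ref{lem:iso_index_pref}, has to fight exactly this index-loss problem and needs the $(-1)$-curve induction); and the proposed degeneration to a toric pair is unavailable for a general rational surface and would in any case require an upper-semicontinuity statement for $\cech_k$ of divisor complements that is nowhere established.
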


\noindent Note that $Y \setminus A$ can still be viewed as the interior of a star shaped domain with corners $X$. Proving Conjecture \ref{conj:equality} would require either a clever argument for packing $X$ or a very delicate understanding of the ECH and Reeb dynamics of smoothings of $X$. 

\vspace{4pt} 

Beyond the ECH capacities, there are finer obstructions defined (by Hutchings in \cite{mh3}) for embeddings of convex toric domains into other convex toric domains. These invariants are still poorly understood. The hope is that they could help solve some of the more obstinate embedding problems, such as the problem of embedding polydisks into ellipsoids.

\begin{question} Let $\Delta$ and $\Omega$ be rational moment polytopes. Is there a framework for treating the obstructions of \cite{mh3} to embeddings $X_\Delta \to X_\Omega$ in terms of the algebraic geometry of $Y_\Delta$ and $Y_\Omega$?
\end{question}

Finally, our proof of Theorem \ref{thm:app_2} for the Gromov width requires only a family of capacities that provide sharp obstructions for embeddings of the ball into convex toric domains, and an extension of these invariants to closed toric surfaces satisfying a set of axioms (see Proposition \ref{thm:axioms_of_calg}). It is interesting to ask if the proof of Theorem \ref{thm:app_2} can be ported to higher dimensions using another family of holomorphic curve based capacities, such as the $S^1$-equivariant symplectic homology capacities of Gutt-Hutchings \cite{gh} or the rational SFT capacities of Siegel \cite{ks}.

\noindent 

\subsection*{Outline} This concludes {\bf \S 1}, the introduction. The rest of the paper is organized as follows.

\vspace{4pt}

In {\bf \S 2}, we cover preliminaries in Seiberg-Witten theory (\ref{subsec:SW_invariants}) and embedded contact homology (\ref{subsec:ECH}). We then prove an important estimate of the ECH capacities of a star-shaped domain in terms of a minimum area over Seiberg-Witten non-zero classes. We should note that this is where the ``neck stretching'' part of the argument is made formal.

\vspace{4pt}

In {\bf \S 3}, we discuss the algebraic capacities in earnest (\S \ref{subsec:cons_alg_cap}). We then prove Theorem \ref{thm:main_intro} using the results of \S 2 and methods from algebraic geometry (\S \ref{subsec:ech_alg}).  

\vspace{4pt} 

In {\bf \S 4}, we discuss the applications to toric surfaces. We start with a review of toric surfaces (\ref{subsec:toric_surfaces}) and toric domains (\ref{subsec:toric_domains}). We then show that the algebraic capacities of a (possibly singular) surface satisfy a set of nice axioms (\ref{subsec:axioms_of_calg}). Finally, we apply the axioms to prove Theorems \ref{thm:app_1}-\ref{thm:app_3}.

\subsection*{Acknowledgements} We would like to thank Michael Hutchings for sharing \cite{mh} with us and suggesting that its contents were relevant to the arguments in \S \ref{subsec:intuitive_argument}. We would also like to thank David Eisenbud and Sam Payne for helpful conversations. JC was supported by the NSF Graduate Research Fellowship under Grant No.~1752814.

\section{ECH capacities and Seiberg--Witten theory} \label{sec:preliminaries} In this section, we review some aspects of Seiberg--Witten theory (\S \ref{subsec:SW_invariants}) and embedded contact homology (\S \ref{subsec:ECH}). Our goal is to prove an estimate for the ECH capacities in terms of the Seiberg--Witten invariants in \S \ref{subsec:ECH_to_SW}. 

\subsection{Seiberg--Witten invariants} \label{subsec:SW_invariants} The Seiberg--Witten invariant of a closed $4$-manifold $X$ with $b^+(X) \ge 1$ and a spin-c structure $\mathfrak{s}$ is an integral smooth invariant denoted by
\[\SW_X(\mathfrak{s}) \in \Z\]
A symplectic manifold $X$ has a canonical spin-c structure $\mathfrak{s}_X$. Since spin-c structures on $X$ are a torsor for $H_2(X;\Z)$, $\SW_X$ in the symplectic setting can be viewed as map
\begin{equation}
\SW_X:H_2(X;\Z) \to \Z \qquad A \mapsto \SW_X(A) := \SW_X(\mathfrak{s}_X + A)
\end{equation}
In later sections (e.g. \S \ref{subsec:ech_alg}), we will often refer to the set of mod 2 Seiberg-Witten non-zero classes
\begin{equation}
\SW(X) := \{A \in H^2(X;\Z) \; : \; \SW_X(A) = 1 \mod 2\} \subset H^2(X;\Z)
\end{equation}
In this section, we discusss several properties of the these invariants that we will apply in later sections. See \cite{jm,sm} for a more detailed review. 

\vspace{3pt}

Let us briefly recall the construction of $\SW_X$ for $X$ symplectic and spin-c structure $\mathfrak{s} = \mathfrak{s}_X + A$. Choose a metric $g$ and a self-dual $2$-form $\mu$. Given this data, we can consider the Seiberg-Witten equations for a pair $(a,\psi)$ of a spin-c connection $a$ on $\mathfrak{s}$ and a spinor $\psi \in \Gamma(S^+)$.
\begin{equation}
D_a\psi = 0 \qquad F_a^+ + \sigma(\psi) = \mu
\end{equation}
\begin{prop} For generic $(g,\mu)$, the moduli space $\mathcal{M}(A)$ of pairs $(a,\psi)$ modulo a natural $C^\infty(X;S^1)$ action is a closed manifold of dimension
\begin{equation}
\Ind(A) = c_1(X) \cdot A + A^2
\end{equation} \end{prop}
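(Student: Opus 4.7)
The plan is to combine three standard ingredients from gauge theory: (a) a Sard--Smale transversality argument showing that the moduli space is smooth for generic perturbation data; (b) an Atiyah--Singer index calculation giving the virtual dimension; and (c) a Weitzenböck-based a priori estimate giving compactness. Together these show that $\M(A)$ is a closed manifold of the claimed dimension.

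First I would set up the deformation complex at a solution $(a,\psi)$. Let $\mathcal{G}=C^\infty(X;S^1)$ act on the configuration space $\mathcal{C}$ of pairs $(a,\psi)$, and let $\mathcal{F}(a,\psi) = (D_a\psi,\; F_a^+ + \sigma(\psi) - \mu)$ be the SW map. The linearization $d\mathcal{F}$, stacked together with the adjoint of the infinitesimal gauge action, gives a Fredholm operator
\[
L_{(a,\psi)} : \Omega^1(X;i\R) \oplus \Gamma(S^+) \longrightarrow \Omega^0(X;i\R) \oplus \Omega^{2+}(X;i\R) \oplus \Gamma(S^-).
\]
For transversality, I would enlarge to the parametrized space $\widetilde{\M} = \{(a,\psi,\mu) : \mathcal{F}(a,\psi)=0\}/\mathcal{G}$ and verify that the differential with respect to $\mu$ is surjective at every irreducible solution; since $b^+(X) \ge 1$ (this is the standing hypothesis for $\SW$ to be defined) one can further arrange that reducibles are avoided for generic $\mu$. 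Sard--Smale then yields a residual set of $\mu$ for which $\M(A)$ is cut out transversely.

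Second, I would compute the virtual dimension as the index of $L_{(a,\psi)}$. Decomposing $L$ into its Dirac piece and its extended ASD piece gives
\[
\Ind(A) \;=\; \operatorname{ind}_\R(D_a) \;-\; \bigl(b^0(X) - b^1(X) + b^+(X)\bigr).
\]
The real index of the spin-c Dirac operator is $\tfrac{1}{4}\bigl(c_1(\mathfrak{s})^2 - \sigma(X)\bigr)$ by Atiyah--Singer, while $b^0 - b^1 + b^+ = \tfrac{1}{2}(\chi(X) + \sigma(X))$. Hence
\[
\Ind(A) \;=\; \frac{c_1(\mathfrak{s})^2 - 2\chi(X) - 3\sigma(X)}{4}.
\]
Now substitute $\mathfrak{s} = \mathfrak{s}_X + A$, so $c_1(\mathfrak{s}) = c_1(X) + 2A$, expand $c_1(\mathfrak{s})^2 = c_1(X)^2 + 4c_1(X)\cdot A + 4A^2$, and apply the almost-complex identity $c_1(X)^2 = 2\chi(X) + 3\sigma(X)$. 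The $\chi,\sigma$ terms cancel and one is left with $c_1(X)\cdot A + A^2$, as claimed.

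Finally, to get a closed (rather than merely smooth) moduli space I would invoke the classical compactness argument: the Weitzenböck formula $D_a^\ast D_a\psi = \nabla_a^\ast \nabla_a \psi + \tfrac{s}{4}\psi + \tfrac{1}{2}F_a^+\!\cdot\!\psi$, combined with pointwise maximum principle applied to $|\psi|^2$, gives a uniform $C^0$ bound on $\psi$ depending only on $(g,\mu)$; this feeds into Uhlenbeck compactness for $a$ and elliptic bootstrapping to yield sequential compactness of $\M(A)$ modulo gauge. The main obstacle is really the transversality step — specifically, checking that $\mu$-variations span the cokernel at every irreducible $(a,\psi)$ and separately excluding reducibles using $b^+(X)\ge 1$; the index computation is then a mechanical application of Atiyah--Singer together with the symplectic/almost-complex identity on characteristic numbers.
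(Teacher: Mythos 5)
Your outline is correct and is exactly the standard argument: the paper states this proposition without proof as background, deferring to the references on Seiberg--Witten theory, and your three ingredients (Sard--Smale transversality in the $\mu$-parametrized moduli space with reducibles excluded via $b^+\ge 1$, the Atiyah--Singer computation $\Ind = \tfrac{1}{4}(c_1(\mathfrak{s})^2 - 2\chi - 3\sigma)$ specialized via $c_1(\mathfrak{s}) = c_1(X)+2A$ and $c_1(X)^2 = 2\chi+3\sigma$, and Weitzenb\"ock/maximum-principle compactness) are precisely how it is proved there. The index arithmetic checks out, so nothing further is needed.
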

\noindent The Seiberg-Witten invariant $\SW_X(A)$ is acquired by integrating a certain natural top-dimensional cohomology class over $\mathcal{M}(A)$. It is independent of the choice of $(g,\mu)$ if $b^+(X) \ge 2$. 

\subsection{Wall--Crossing} \label{subsec:wall_crossing} When $b^+(X) = 1$, two different Seiberg-Witten invariants arise depending on the choice of $(g,\mu)$. More precisely, we have invariants
\begin{equation} \label{eqn:symplectic_chamber}
\SW_X(A) \quad\text{if $(g,\mu)$ satisfies}\quad \frac{i}{2\pi} \int_Y \mu \wedge \tau_g > [\omega] \cdot A\end{equation}
\begin{equation}
\SW_X^-(A) \quad\text{if $(g,\mu)$ satisfies}\quad \frac{i}{2\pi} \int_Y \mu \wedge \tau_g < [\omega] \cdot A
\end{equation}
Here $\tau_g$ is the unique self-dual, $g$-harmonic $2$-form satisfying $[\tau_g] = [\omega]$ in $H^2(Y)$, and if $g$ is compatible with $\omega$ then $\tau_g = \omega$. We refer to the space of data satisfying as (\ref{eqn:symplectic_chamber}) as the \emph{symplectic} chamber and the other space of data as the \emph{non-symplectic} chamber (cf. \cite[p. 463]{tb4}).

\vspace{3pt}

The two invariants $\SW_X$ and $\SW^-_X$ are related by a well-known wall-crossing formula. Here is a simple version of this formula that we will use momentarily  (cf. Li-Liu \cite[Prop 1.1]{ll}).

\begin{thm}[Wall Crossing] \label{thm:wall_crossing} Let $X$ be a closed symplectic $4$-manifold with $b_1(X) = 0$ and $b^+(X) = 1$, and let $A \in H_2(X)$ satisfy $I(A) \ge 0$. Then
\begin{equation}
\SW_X(A) = \SW^-_X(A) \pm 1\end{equation}
\end{thm}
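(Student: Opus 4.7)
The plan is to exploit the fact that for $b^+(X)=1$ the Seiberg--Witten moduli space depends on the chamber of the perturbation data $(g,\mu)$, and that the wall separating the symplectic chamber from the non-symplectic chamber consists precisely of those $(g,\mu)$ for which the equations admit a reducible solution (i.e.\ one with $\psi \equiv 0$). The difference $\SW_X(A) - \SW_X^-(A)$ should then be a local, oriented contribution arising from these reducibles as one deforms from one chamber to the other. Concretely, I would pick a generic smooth path $(g_t,\mu_t)_{t\in[-1,1]}$ of perturbation data with $(g_{-1},\mu_{-1})$ in the non-symplectic chamber, $(g_1,\mu_1)$ in the symplectic chamber, crossing the wall $\{\tfrac{i}{2\pi}\int \mu\wedge\tau_g = [\omega]\cdot A\}$ transversally at a single interior time, say $t=0$. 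Away from $t=0$ only irreducibles occur, so the parametrized moduli space $\bigcup_t \mathcal{M}_t(A)$ provides a smooth oriented cobordism between $\mathcal{M}_{-1}(A)$ and $\mathcal{M}_1(A)$, and the difference of the two invariants equals the oriented count of a defect localized near $t=0$.

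Next I would analyze the parametrized moduli space in a neighborhood of the wall. Because $b_1(X)=0$ the Picard torus is trivial, so there is a unique gauge equivalence class of reducible solution on the wall. Using a Kuranishi/slice model for the Seiberg--Witten equations at that reducible --- whose deformation complex is controlled by the twisted Dirac operator together with the self-dual curvature piece --- and invoking the hypothesis $\Ind(A) = c_1(X)\cdot A + A^2 \ge 0$, the local picture reduces to counting the zeros of a transverse section of a finite-rank complex obstruction bundle over a complex projective space (obtained by quotienting the Dirac kernel by the residual $S^1$ gauge group). A standard Euler class computation on this projective space yields $\pm 1$, with sign determined by the direction in which $\tfrac{i}{2\pi}\int \mu\wedge\tau_g - [\omega]\cdot A$ changes across the wall.

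The main obstacle is the gluing/linearization analysis at the reducible on the wall: one needs a precise Kuranishi model and a careful orientation computation to conclude that the contribution is exactly $\pm 1$ rather than some larger integer. This is the core content of \cite{ll} (and of related wall-crossing work of Taubes and Kronheimer--Mrowka), and I would import it directly rather than redevelop it. It is precisely the hypothesis $b_1(X)=0$ that collapses the Picard torus to a point and reduces the wall-crossing formula to the clean $\pm 1$ statement above; in the general case one picks up a characteristic class expression integrated over the Picard torus, which here becomes trivial.
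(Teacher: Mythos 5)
Your proposal is correct: the paper itself gives no proof of this statement but simply imports it from Li--Liu \cite{ll}, which is exactly what you do after an accurate sketch of the standard wall-crossing mechanism (single reducible on the wall since $b_1(X)=0$ kills the Picard torus, local Kuranishi/obstruction-bundle analysis contributing $\pm 1$). So you take essentially the same approach as the paper, with more detail than it provides.
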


\vspace{3pt}

\subsection{Gromov-Taubes} \label{subsec:Gromov_Taubes} There is a deep alternate formulation of the Seiberg-Witten invariants using $J$-holomorphic curves due primarily to Taubes \cite{tb3,tb4}, who introduced the \emph{Gromov-Taubes invariants}
\[
\Gr_X:H_2(X) \to \Z \qquad A \mapsto \Gr_X(A)
\]
Given a choice of compatible complex structure $J$ on $X$, $\Gr_X(A)$ is a signed count of points in a certain $0$-dimensional moduli space $\mathcal{M}_A(J)$ of disconnected $J$-curves $C$ in homology class $A$ that pass through $k = I(A)/2$ generic points of $X$. Of course, $J$ must be chosen so that $\mathcal{M}_A(J)$ is transversely cut out in an appropriate sense.

\begin{thm}[Taubes] \label{thm:SW_is_Gr} The Seiberg-Witten and Gromov-Taubes invariants agree, i.e. $\SW_X = \Gr_X$.
\end{thm}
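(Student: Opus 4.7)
The plan is to follow Taubes' large-perturbation strategy. On $(X,\omega,J)$ with $g$ compatible, introduce the one-parameter family of perturbed Seiberg--Witten equations
\begin{equation}
D_a\psi = 0, \qquad F_a^+ = r\bigl(\sigma(\psi) - i\omega\bigr) + \mu
\end{equation}
where $r > 0$ is the parameter and $\psi = (\alpha,\beta)$ is decomposed via the splitting $S^+ = E \oplus (E \otimes K_X^{-1})$ induced by $J$, with $E$ the line bundle corresponding to $A$. The key estimate I would establish first, via the Weitzenbock formula combined with the equations, is that for large $r$ any solution has $|\alpha|^2 \to 1$ pointwise outside a thin neighborhood of a codimension-two set and $\beta \to 0$ at an exponential rate governed by $r$. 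This localization is the bridge between the gauge-theoretic and holomorphic-curve pictures.

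For the direction $\SW_X \to \Gr_X$, I would use the uniform $C^0$ bounds on $\alpha$ and decay of $\beta$ to analyze the zero loci $Z_r := \alpha_r^{-1}(0)$. The aim is to show that as $r\to\infty$ along a subsequence these zero sets converge, as integer-rectifiable currents of Poincar\'e dual class $A$, to a $J$-holomorphic subvariety $C$ with integer multiplicities on its components. The generic point constraints cutting down the SW moduli space to dimension zero translate, under the localization, to point constraints on $C$, so the cut-down moduli match dimension-by-dimension and one obtains a correspondence between SW solutions and weighted pseudoholomorphic divisors counted by $\Gr_X(A)$.

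For the reverse direction $\Gr_X \to \SW_X$, given a $J$-holomorphic divisor representing $A$, I would construct approximate SW solutions by prescribing $\alpha$ to vanish along the curve with the asymptotic model dictated by the localization analysis, cutting off to a global section, and then solving away the error. The linearization at the approximate solution becomes uniformly invertible on suitably weighted Sobolev spaces once $r$ is large enough, and a contraction-mapping argument produces an honest SW solution. Combined with the uniqueness of the asymptotic limit from the previous step, this upgrades the set-level correspondence to a bijection between moduli.

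The hardest step, and the one I would spend the most time on, is matching signs and contributions from degenerate configurations, especially embedded tori of square zero. Naive counts disagree there, because multiple covers of such tori contribute nontrivially to the SW side while being invisible to an unrefined Gromov count. Taubes' resolution is to \emph{define} $\Gr_X$ by a weighted sum in which each square-zero torus contributes a specific Alexander-polynomial-like factor summing over its multiple covers; proving this combinatorial identity matches the SW wall-crossings near reducibles is the genuine obstacle. The analytic core (localization, gluing, and transversality for generic $(g,\mu,J)$) is modular once the large-$r$ estimates are in place, but I would budget most of the work for the torus-contribution bookkeeping.
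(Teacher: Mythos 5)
This statement is not proved in the paper: it is quoted as a black-box citation of Taubes' work (\cite{tb3,tb4}), so there is no internal argument to compare against. Your outline is an accurate summary of the architecture of Taubes' actual proof: the large-$r$ perturbation $F_a^+ = r(\sigma(\psi)-i\omega)+\mu$ with the splitting $S^+ = E\oplus(E\otimes K_X^{-1})$, the a priori estimates forcing $|\alpha|\to 1$ away from $\alpha^{-1}(0)$ and exponential decay of $\beta$, convergence of the zero currents to a $J$-holomorphic subvariety for $\mathrm{SW}\Rightarrow\mathrm{Gr}$, the gluing/contraction-mapping construction for the converse, and the special weighting of multiply covered square-zero tori built into the definition of $\mathrm{Gr}$. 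You have also correctly identified where the real difficulty lives. That said, what you have written is a roadmap of a proof that occupies several long papers, not a proof: every step you name (the pointwise estimates from the Weitzenbock formula, the integer-rectifiability and regularity of the limiting current, uniform invertibility of the linearization on weighted spaces, and above all the torus bookkeeping) is itself a major theorem whose verification is the content of Taubes' work. One small imprecision: the subtlety with square-zero tori is not really about matching SW wall-crossing near reducibles, but about how the SW invariants of the classes $kT$ encode contributions from all multiple covers of a single embedded torus $T$; the weighting in the definition of $\mathrm{Gr}$ is engineered to reproduce exactly that generating function. For the purposes of this paper, citing Taubes, as the authors do, is the appropriate level of rigor.
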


Theorem \ref{thm:SW_is_Gr} is extremely powerful and has a number of surprising consequences. For example, we have the following effectiveness result.

\begin{prop}[Effective Classes] \label{prop:sw_in_ne} Let $Y$ be a smooth projective surface. Then every Seiberg-Witten non-zero class is effective, i.e. $\on{SW}(Y)\subseteq\neb(Y)$.
\end{prop}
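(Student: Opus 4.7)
The plan is a short deduction from Taubes' identification $\SW_Y = \Gr_Y$ recorded as Theorem \ref{thm:SW_is_Gr}.

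Suppose $A \in \SW(Y)$, so $\SW_Y(A) \equiv 1 \pmod 2$ and in particular $\SW_Y(A) \neq 0$. Theorem \ref{thm:SW_is_Gr} then gives $\Gr_Y(A) \neq 0$, which forces the Gromov--Taubes moduli space $\mathcal{M}_A(J)$ to be non-empty for generic $\omega$-compatible $J$ (and in particular forces $\Ind(A) \geq 0$). Picking any representative produces a disconnected $J$-holomorphic curve $C = \sum_i n_i C_i$ with embedded components $C_i$ and positive integer weights $n_i \in \Z_+$, with $[C] = A$ in $H_2(Y;\Z)$.

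It remains to upgrade $[C]$ to a class in the closure of the algebraic effective cone. Starting from a $J$ close to the reference integrable complex structure $J_0$ on $Y$ and deforming $J$ to $J_0$ along a generic path, Gromov compactness exhibits each component $C_i$ as the limit of $J_0$-holomorphic, hence algebraic, cycles $C_i'$ with $[C_i']=[C_i]$. Thus each $[C_i]$ is effective in the usual algebro-geometric sense, and $A = \sum_i n_i [C_i] \in \on{NE}(Y) \subseteq \neb(Y)$.

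The only conceptual subtlety I anticipate is the chamber dependence of the Seiberg--Witten invariants when $b^+(Y) = 1$, which covers every smooth rational projective surface of interest. The invariant $\SW_Y$ defined in \S\ref{subsec:SW_invariants} is the symplectic-chamber invariant (cf. \eqref{eqn:symplectic_chamber}), and it is precisely this chamber in which Taubes' theorem identifies $\SW_Y$ with $\Gr_Y$; thus no wall-crossing correction intervenes and the argument above applies uniformly across all smooth projective $Y$, irrespective of $b^+(Y)$.
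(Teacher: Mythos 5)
Your proposal is correct, but it runs the argument in the opposite direction from the paper, and the two routes use different technical mechanisms. The paper argues the contrapositive: if $A$ is non-effective and non-zero, then for the \emph{integrable} projective complex structure $J_0$ no $J_0$-holomorphic curve in class $A$ exists (such curves would be algebraic), so $\mathcal{M}_A(J_0)$ is empty — and an empty moduli space is automatically transversely cut out, so $J_0$ itself is an admissible choice for computing $\Gr_Y(A)=0=\SW_Y(A)$. You instead take the direct implication: a nonvanishing invariant forces $\mathcal{M}_A(J)\neq\emptyset$ for generic $J$, and you then degenerate $J\to J_0$ and invoke Gromov compactness to produce an algebraic cycle in class $A$. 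This works (energy is controlled by $[\omega]\cdot A$, and the limiting $J_0$-holomorphic stable map has image an algebraic cycle whose class, with positive multiplicities, is $A$), but it costs you a compactness argument and the slightly delicate point that the limit need not be a single curve per component $C_i$ — only the total class $A$ is guaranteed to land in $\on{NE}(Y)$, which is all you need. The paper's ``emptiness implies transversality'' trick avoids this entirely. Two minor points: you should handle $A=0$ separately (your argument produces no curve there; the paper notes $\Gr_Y(0)=1$ via the empty curve, and $0\in\neb(Y)$ trivially), and your closing remark on the $b^+=1$ chamber issue is correct and consistent with the paper's convention \eqref{eqn:symplectic_chamber}, though the paper leaves it implicit.
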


\begin{proof} Let $J$ be the projective complex structure on $Y$. Any $J$-holomorphic map $u:\Sigma \to Y$ from a closed (possibly disconnected) Riemann surface $\Sigma$ is, of course, algebraic. If $A \in H_2(Y)$ is  non-zero and non-effective, then no such curve can exist. In particular, the Gromov-Taubes moduli space $\mathcal{M}_A(J)$ is empty (and thus transverse), so $\SW_Y(A) = \Gr_Y(A) = 0$.

\vspace{3pt}

There is a slight technical point when $A = 0$. In this case, the empty curve is counted as the unique $J$-curve of homology class $0$, so $\Gr_Y(0) = 1$. This covers the statement in that case.
\end{proof}

\subsection{SW For Rational Surfaces} \label{subsec:SW_for_rational_surfaces} We can use Proposition \ref{thm:wall_crossing} and the wall-crossing formula in Theorem \ref{thm:wall_crossing} to compute the mod 2 Seiberg-Witten invariants of a rational surface. This calculation is key to \S \ref{sec:alg_cap_bir}.

\begin{prop}[Rational Surfaces] \label{prop:rational_surfaces} Let $Y$ be a smooth rational surface. Then
\[
\SW(Y) = \{A \in \neb(Y) \; : \; I(A) \ge 0\}
\]
\end{prop}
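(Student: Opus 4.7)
The plan is to establish both inclusions in the claimed equality.

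For the inclusion $\SW(Y) \subseteq \{A \in \neb(Y) : I(A) \ge 0\}$: if $A \in \SW(Y)$ then $\SW_Y(A)$ is odd and hence non-zero. Proposition \ref{prop:sw_in_ne} gives $A \in \neb(Y)$ directly. Moreover, $\SW_Y(A) \ne 0$ requires the Seiberg--Witten moduli to be non-empty, whose (generic) dimension is $\Ind(A) = c_1(Y) \cdot A + A^2 = I(A)$, forcing $I(A) \ge 0$. This direction is essentially automatic from material already in the section.

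For the reverse inclusion, fix $A \in \neb(Y)$ with $I(A) \ge 0$ and aim to show $\SW_Y(A) \equiv 1 \pmod{2}$. Since $Y$ is a smooth rational surface, $b_1(Y) = 0$ and $b^+(Y) = 1$, so Theorem \ref{thm:wall_crossing} yields $\SW_Y(A) = \SW^-_Y(A) \pm 1$. The goal thus reduces to proving $\SW^-_Y(A) \equiv 0 \pmod 2$, and I would do this by showing the stronger statement $\SW^-_Y(A) = 0$. To compute $\SW^-_Y(A)$, I would apply the charge-conjugation symmetry of Seiberg--Witten theory, which sends a spin-c structure to its conjugate and, for $b^+ = 1$, swaps the two chambers (the sign of $[\omega] \cdot K_Y$ for a chosen K\"ahler form controls the direction of the swap). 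Since the canonical spin-c $\mathfrak{s}_Y$ has $c_1(\mathfrak{s}_Y) = -K_Y$, the conjugate of $\mathfrak{s}_Y + A$ is $\mathfrak{s}_Y + (K_Y - A)$, and combining conjugation with Theorem \ref{thm:SW_is_Gr} on the other side yields
\[
\SW^-_Y(A) = \pm\, \SW^+_Y(K_Y - A) = \pm\, \Gr_Y(K_Y - A).
\]

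To finish, I would show $\Gr_Y(K_Y - A) = 0$ whenever $A \in \neb(Y) \setminus \{0\}$. By the Enriques classification, a smooth rational surface has $\kappa(Y) = -\infty$, so $K_Y$ is not pseudo-effective, i.e.\ $K_Y \notin \neb(Y)$. Were $K_Y - A \in \neb(Y)$, summing with $A \in \neb(Y)$ would give $K_Y = A + (K_Y - A) \in \neb(Y)$, a contradiction. Hence $K_Y - A \notin \neb(Y)$, and the Gromov--Taubes moduli in class $K_Y - A$ is empty for the projective complex structure, exactly as in the proof of Proposition \ref{prop:sw_in_ne}. The edge case $A = 0$ is handled separately: $\Gr_Y(0) = 1$ from the empty-curve convention, so $\SW_Y(0) = 1$ directly by Theorem \ref{thm:SW_is_Gr}.

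The main obstacle I anticipate is making the conjugation-chamber identification $\SW^-_Y(A) = \pm \SW^+_Y(K_Y - A)$ fully rigorous, since it requires tracking the action of charge conjugation on the chamber structure from \S \ref{subsec:wall_crossing} and using that $[\omega] \cdot K_Y < 0$ for a K\"ahler form (which exists on any rational surface, since $-K_Y$ lies in the interior of the movable cone). An alternative would be to cite a wall-crossing computation directly from Li--Liu \cite{ll} that expresses $\SW^-_Y(A)$ in Gromov--Taubes terms, bypassing the need to set up conjugation explicitly; this may also make the sign assertion unambiguous and streamline the write-up.
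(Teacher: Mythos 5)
Your first inclusion is the same as the paper's (effectivity from Proposition \ref{prop:sw_in_ne} plus non-negativity of the moduli dimension). For the reverse inclusion, both you and the paper reduce via the wall-crossing formula of Theorem \ref{thm:wall_crossing} to showing $\SW^-_Y(A)=0$, but you establish that vanishing by a genuinely different mechanism. The paper places the pair $(g,\mu)$ in the non-symplectic chamber with $g$ a positive-scalar-curvature metric --- which exists on every smooth rational surface by LeBrun \cite{lb} and Gromov--Lawson \cite{gl} --- and invokes the standard emptiness of the perturbed Seiberg--Witten moduli space for psc metrics. You instead use charge conjugation, which negates the perturbation, swaps the two chambers, and replaces $A$ by $K_Y-A$, together with $\Gr=\SW$ and the observation that $K_Y-A$ cannot be effective because $K_Y$ is not pseudo-effective on a rational surface ($\kappa(Y)=-\infty$); so the Gromov--Taubes moduli space is empty exactly as in Proposition \ref{prop:sw_in_ne}. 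Both routes use rationality essentially (psc metrics versus non-pseudo-effectivity of $K_Y$), and both are sound. Your route stays entirely within the algebro-geometric/Gromov--Taubes framework already set up in this section and avoids importing psc geometry, at the cost of justifying the chamber bookkeeping under conjugation; your identity $\SW^-_Y(A)=\pm\SW^+_Y(K_Y-A)$ is correct once the wall is written in the standard form $(2\pi c_1(\mathfrak{s})+\eta)\cdot\omega_g=0$, the sign is irrelevant mod $2$, and citing Li--Liu \cite{ll} for this, as you propose, would close that point. (Minor remark: your exclusion of $A=0$ from the main argument is unnecessary, since $K_Y-0=K_Y$ is still not pseudo-effective, though your separate treatment of it is also fine.)
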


The proof is a direct generalization of the calculation for $\P^2$, and requires the following lemma.

\begin{lemma} Every smooth, rational, projective surface $X$ admits a psc (positive scalar curvature) metric. \end{lemma}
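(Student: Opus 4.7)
The plan is to combine the classification of smooth rational projective surfaces with the classical surgery stability of positive scalar curvature. Recall that every smooth rational projective surface $X$ is obtained from a minimal rational surface by a finite sequence of blow-ups at points, and the minimal rational surfaces are $\P^2$ together with the Hirzebruch surfaces $\bF_n$ for $n\geq 0$.

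First I would verify psc on the minimal models. The projective plane carries the Fubini--Study metric, which has constant positive scalar curvature. Each Hirzebruch surface $\bF_n$ is an $S^2$-bundle over $S^2$, and is diffeomorphic to $S^2\times S^2$ when $n$ is even and to $\P^2\#\ol{\P^2}$ when $n$ is odd; in either case a warped product of round metrics on base and fibre (made sufficiently pinched that the base curvature dominates) produces a psc metric. Alternatively this step can be subsumed into the surgery argument below once psc is known on $\P^2$.

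Next I would invoke the Gromov--Lawson (and Schoen--Yau) surgery theorem: the class of closed smooth manifolds admitting a psc metric is preserved under surgeries of codimension at least three, and in particular under connected sums in dimension $\geq 3$. Since the scalar curvature of a Riemannian metric does not depend on the orientation, $\ol{\P^2}$ admits a psc metric as well. Topologically, the blow-up of a smooth complex surface at a point is precisely the operation of taking a connected sum with $\ol{\P^2}$. An induction on the number of blow-ups, starting from the minimal model, then yields a psc metric on any smooth rational projective surface $X$.

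The only nontrivial analytic input is the Gromov--Lawson surgery theorem, which is the ``main obstacle'' in the sense that every other ingredient is either topological (the Enriques--Kodaira classification in the rational case, and the identification of blow-up with connected sum with $\ol{\P^2}$) or entirely explicit (Fubini--Study, warped products on $S^2$-bundles). Since that theorem is by now completely standard, I do not anticipate any genuine difficulty; the proof is essentially a matter of threading together these classical facts.
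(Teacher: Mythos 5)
Your proof is correct and follows essentially the same route as the paper: establish psc on the minimal rational surfaces (the paper cites LeBrun for this, while you construct the metrics explicitly via Fubini--Study and warped products) and then apply the Gromov--Lawson surgery/connected-sum theorem to handle blow-ups, which are smoothly connected sums with $\ol{\P^2}$.
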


\begin{proof} Every minimal rational surface $M$ has a psc metric \cite[Thm 1]{lb}, e.g. $\P^2$. In particular, $\bar{\P}^2$ also has a psc metric. Thus by the work of Gromov-Lawson \cite[Thm 1]{gl}, the connect sum $X = M \# k\bar{\P}^2$ has a psc metric for any $k \ge 0$. This covers all rational surfaces. \end{proof}

\begin{proof} (Proposition \ref{prop:rational_surfaces}) Let $Y$ be the smooth rational surface above with Kahler form $\omega$. Every rational projective surface $Y$ satisfies $b^+(Y) = 1$ and $b_1(Y) = 0$. 

\vspace{3pt}

By Proposition \ref{prop:sw_in_ne}, every Seiberg-Witten non-zero class $A$ is effective. Furthermore, every such class $A$ must satisfy $I(A)$ since the moduli space $\mathcal{M}(A)$ must have dimension $I(A) \ge 0$. Thus
\[
\SW(Y) \subseteq \{A \in \neb(Y) \; : \; I(A) \ge 0\}
\]
We thus must prove inclusion in the other direction.

\vspace{3pt}

Thus let $A \in H_2(Y)$ be an effective class with $I(A) \ge 0$. Let $(g,\mu)$ be a pair of a psc metric and a $C^0$-small self-dual $1$-form $\mu$. The pair is in the non-symplectic chamber. Indeed, $[\omega] \cdot A > 0$ since $\omega$ is ample and $A$ is effective, while $\int \mu \wedge \tau_g \simeq 0$. The $\mu$-perturbed Seiberg-Witten moduli space is empty since $g$ has psc \cite[Cor 2.2.6 and Cor 2.2.18]{ni}. Thus the Seiberg-Witten invariant $\SW^-(A)$ in this chamber vanishes. By the wall-crossing formula of Theorem \ref{thm:wall_crossing}, we thus conclude that
\[\SW^+_Y(A) = \SW^-_Y(A) \pm 1 = 1 \mod 2\]
This concludes the proof. \end{proof}

Here are a few examples of the above calculation for specific rational surfaces.

\begin{example}[Projective Plane] The homology $H_2(\P^2)$ is generated by the hyperplane class $H$ and the effective classes are $\NE(\P^2) = \text{Cone}(H)$. Furthermore, the anti-canonical is $-K = 3H$ so
\[
I(kH) = kH \cdot 3H + kH \cdot H = (k^2 + 3k)
\]
Thus $I(A) \ge 0$ for any effective class and so by Proposition \ref{prop:rational_surfaces}, $\SW(\P^2) = \text{Cone}(H)$. 
\end{example}

\begin{example}[Line Times Line] The effective cone $\NE(\P^1 \times \P^1)$ is generated by the two classes $D_1 = [\P^1 \times p]$ and $D_2 = [p \times \P^1]$. These intersect as follows.
\[
D_1 \cdot D_1 = D_2 \cdot D_2 = 0 \qquad D_1 \cdot D_2 = 1
\]
The anti-canonical divisor $-K$ is $2D_1 + 2D_2$. This is an ample class, so again $I(A) \ge 0$ for any $A$ and we acquire $\SW(\P^1 \times \P^1) = \text{Cone}(D_1,D_2)$.\end{example}

\subsection{Embedded Contact Homology} \label{subsec:ECH} Here we review embedded contact homology as a symplectic field theory, as presented in \cite{mh} (also see \cite{mh2}).

\begin{definition} A \emph{contact $3$-manifold} $(Y,\xi)$ is a $3$-manifold $Y$ with a $2$-plane bundle $\xi \subset TY$ that is the kernel $\xi = \ker(\alpha)$ of a contact form. A \emph{contact form} $\alpha$ is a $1$-form satisfying
\[\alpha \wedge d\alpha > 0 \quad\text{everywhere}\]
The \emph{Reeb vector-field} $R$ of $\alpha$ is the unique vector-field satisfying $\alpha(R) = 1$ and $d\alpha(R,\cdot) = 0$, and a \emph{Reeb orbit} is a closed orbit (modulo reparametrization) of $R$.
\end{definition}

The \emph{embedded contact homology}, or ECH for short, of a closed contact $3$-manifold $(Y,\xi)$ is a $\Z/2$-graded $\Z/2$-module denoted by
\[\ECH(Y,\xi)  = \bigoplus_{[\Gamma] \in H_1(Y;\Z)}\ECH(Y,\xi;[\Gamma])\]
The ECH group comes equipped with a degree $-2$ \emph{U-map} and a distinguished \emph{empty set} class.
\[U:\ECH(Y,\xi;[\Gamma]) \to \ECH(Y,\xi;[\Gamma]) \qquad [\emptyset] \in \ECH(Y,\xi;[0])\]
The $\Z/2$ grading on $\ECH(Y,\xi;[0])$ can be canonically enhanced to a $\Z/2m$-grading where $[\emptyset]$ has grading $0$ and $m$ is defined by
\[m := \min{\langle c_1(\xi);[\Sigma]\rangle \; : \; [\Sigma] \in H_2(Y;\Z)}\]
The simplest example of ECH groups are those of the 3-sphere.

\begin{prop} \label{prop:ECH_of_3_sphere} (c.f. \cite{mh2}) The embedded contact homology $\ECH(S^3,\xi)$ of the $3$-sphere is given by
\[\ECH(S^3,\xi) = \Z/2[U^{-1}]\]
as a $\Z/2[U]$-module, where $|U^{-1}| = 2$ and $U$ acts in the obvious way.
\end{prop}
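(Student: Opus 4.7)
The plan is to compute directly using an irrational ellipsoid model. Up to contactomorphism we may take $\alpha$ to be the Liouville form restricted to $\partial E(a,b)\subset\C^2$ for $a/b\notin\Q$; this makes the Reeb flow nondegenerate without changing $\ECH(S^3,\xi)$ or the $U$-map, both of which depend only on $\xi$ and a generic base point.

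The first step is to enumerate the generators and compute their gradings. The Reeb vector field on $\partial E(a,b)$ has exactly two embedded orbits $\gamma_1,\gamma_2$, both elliptic, so in the unique homology class $[0]\in H_1(S^3)=0$ the ECH generators are the orbit sets $\gamma_1^{m_1}\gamma_2^{m_2}$ indexed by $(m_1,m_2)\in\Z_{\geq 0}^2$. The standard ECH index formula for elliptic orbits over $\C^2$ shows that each such generator has even grading, and the irrationality of $a/b$ makes the symplectic actions $m_1 a + m_2 b$ pairwise distinct. Ordering by increasing action places exactly one generator into each grading $2k$ with $k\geq 0$, with the empty orbit set in grading $0$. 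Since the ECH differential has degree $-1$ and the chain complex is concentrated in even degrees, it vanishes identically, so $\ECH(S^3,\xi)$ is $\Z/2$ in each non-negative even degree and zero otherwise.

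Next I would pin down the $U$-map. By the degree calculation, $U$ can only send the $k$-th generator to the $(k-1)$st generator or to zero. That the coefficient is nonzero for every $k\geq 1$ is the delicate step; the cleanest route is to appeal to Taubes's isomorphism $\ECH(Y,\xi)\cong\HM^{-*}(Y)$, which intertwines the two $U$-maps, together with the standard calculation $\HM^{*}(S^3)\cong\Z/2[U]$ for the unique self-conjugate spin$^c$ structure. Combining this with the grading tally above yields $\ECH(S^3,\xi)\cong\Z/2[U^{-1}]$ as a $\Z/2[U]$-module with $|U^{-1}|=2$ and $U\cdot U^{-k}=U^{-(k-1)}$, which is the claimed identification (with $[\emptyset]$ corresponding to $1$).

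The main obstacle is verifying the nonvanishing of $U$ between consecutive generators. A direct count of index-$2$ holomorphic currents through a marked point in $\R\times\partial E(a,b)$ is technically heavy, which is why the Seiberg--Witten detour is the natural path. An alternative, purely symplectic route is to use the known explicit computation of $\cech_k$ for the ball: the sequence of capacities is strictly increasing in $k$, and this strict increase is equivalent to nonvanishing of $U$ between consecutive ECH chain generators once the chain-level picture above is in hand.
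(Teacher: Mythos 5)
Your argument is correct, and it is essentially the standard computation that the paper itself defers to by citing \cite{mh2}: perturb to an irrational ellipsoid so that there are two elliptic orbits, observe that the generators sit one per nonnegative even grading (forcing the differential to vanish), and pin down the $U$-map via Taubes's isomorphism with $\widehat{HM}{}^{-*}(S^3)\cong\Z/2[U]$. The paper offers no independent proof, so there is nothing further to compare.
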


Given a choice of contact form $\alpha$ for $(Y,\xi)$, one can enhance the ECH groups of $Y$ to a family of \emph{filtered ECH groups} $\ECH^L(Y,\alpha;[\Gamma])$ parametrized by $L \in [0,\infty)$ equipped with natural maps
\begin{equation} \label{eqn:filtered_ECH_inclusions} \iota_L^K:\ECH^L(Y,\alpha;[\Gamma]) \to \ECH^K(Y,\alpha;[\Gamma]) \quad\text{and}\quad \iota_L:\ECH^L(Y,\alpha;[\Gamma]) \to \ECH(Y,\xi;[\Gamma])\end{equation} 
Each filtered ECH group comes equipped with a U-map and empty set class, and these structures are compatible with the maps (\ref{eqn:filtered_ECH_inclusions}).
\[U^L:\ECH^L(Y,\alpha;[\Gamma]) \to \ECH^L(Y,\alpha;[\Gamma]) \qquad [\emptyset]^L \in \ECH^L(Y,\xi;[0])\]
Furthermore, the inclusions $\iota^K_L$ respect composition and the ordinary ECH is the colimit of the filtered ECH groups via the maps $\iota_L$. 

\vspace{3pt}

We can give a simple definition of the ECH capacities in terms of the formal structure of ECH described above.
\begin{definition} The \emph{k-th ECH capacity} $c_k(Y,\alpha)$ of a closed contact $3$-manifold is defined by
\[c_k(Y,\alpha) = \min{L \; :\; [\emptyset] = U^k \circ \iota_L(\sigma) \text{ for }\sigma \in \ECH^L(Y,\alpha;[0])}\]
The \emph{$k$-th ECH capacity} $c_k(X,\lambda)$ of a Liouville domain $(X,\lambda)$ is the $k$-th ECH capacity of its boundary $(\partial X,\lambda|_{\partial X})$ as a contact manifold.
\end{definition}

The ECH capacities are (non-normalized) capacities on the category of Liouville domains.

\begin{prop} \label{prop:ECH_capacities_axioms} The ECH capacities $c_k(\cdot)$ satisfy the following axioms.
\begin{itemize}
	\item[(a)] (Inclusion) If $X \to X'$ is a symplectic embedding of Liouville domains, then $c_k(X,\lambda) \le c_k(X',\lambda')$.
	\item[(b)] (Scaling) If $(X,\lambda)$ is a Liouville domain then $c_k(X,C \cdot\lambda) = C \cdot c_k(X,\lambda)$ for any constant $C > 0$.
\end{itemize}
\end{prop}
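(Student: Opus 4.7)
The scaling axiom (b) is purely formal. Under $\alpha\mapsto C\alpha$ the Reeb vector field rescales as $R_{C\alpha}=R_\alpha/C$, so the Reeb orbits of $C\alpha$ coincide setwise with those of $\alpha$ but have their actions (periods) multiplied by $C$. Consequently the symplectic action of every ECH generator (orbit set) rescales by $C$, inducing tautological isomorphisms of filtered groups $\ECH^L(Y,C\alpha;[\Gamma])\cong\ECH^{L/C}(Y,\alpha;[\Gamma])$ that are compatible with every structure: the filtration inclusions $\iota^K_L$, the maps $\iota_L$ into unfiltered ECH, the $U$-map, and the empty-set class. Tracking the minimum $L$ in the definition of $c_k$ through these identifications gives $c_k(X,C\lambda)=C\cdot c_k(X,\lambda)$.

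The inclusion axiom (a) requires the ECH cobordism maps reviewed in \cite{mh}. Given a symplectic embedding $\varphi:(X,\lambda)\to(X',\lambda')$ of Liouville domains, the plan is to realise the closure of the complement $W:=X'\setminus\varphi(\on{int} X)$ as an exact symplectic cobordism with positive end $(\partial X',\lambda'|_{\partial X'})$ and negative end $(\partial X,\lambda|_{\partial X})$. The outward-pointing Liouville vector field of $\lambda'$ on $\partial X'$ and the inward-pointing Liouville vector field of $\lambda$ on $\varphi(\partial X)$ (transported by $\varphi$) produce the required collars, and a standard Moser/primitive-adjustment argument on $W$ ensures honest exactness of the cobordism in the sense needed for the Hutchings--Taubes machinery.

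To such a cobordism the machinery of \cite{mh} associates, for every $L\ge 0$, cobordism maps
\[\Phi^L(W):\ECH^L(\partial X',\lambda'|_{\partial X'};[0])\to\ECH^L(\partial X,\lambda|_{\partial X};[0])\]
satisfying (i) compatibility with the filtration inclusions $\iota^K_L$ and $\iota_L$, (ii) commutation with the $U$-map, and (iii) preservation of the empty-set class (this third property is where exactness of the cobordism is used, as the empty orbit set is the unique action-zero generator and no non-ghost curves can connect it to itself in an exact cobordism). Granting these maps, the axiom drops out formally: if $L=c_k(X',\lambda')$ and $\sigma\in\ECH^L(\partial X',\lambda'|_{\partial X'};[0])$ satisfies $U^k\circ\iota_L(\sigma)=[\emptyset]$, then $\sigma':=\Phi^L(W)(\sigma)$ satisfies $U^k\circ\iota_L(\sigma')=[\emptyset]$ in $\ECH(\partial X,\xi)$, so $c_k(X,\lambda)\le L$. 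Taking the infimum over admissible $L$ concludes the proof.

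The genuine difficulty, which we do not reprove, lies entirely in constructing $\Phi^L(W)$ and verifying properties (i)--(iii); this rests on Taubes' isomorphism between ECH and monopole Floer cohomology, under which ECH cobordism maps correspond to Seiberg--Witten cobordism maps. Since this is exactly the content of \cite{mh}, we treat the cobordism maps as a black box, and the proof reduces to the formal bookkeeping above.
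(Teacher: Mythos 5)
The paper does not actually prove this proposition; it is quoted from \cite{mh4}, and your argument follows the same standard route: the scaling part is correct and purely formal, and the reduction of monotonicity to the existence of filtered cobordism maps that commute with $U$, respect the inclusions $\iota^K_L$, and preserve $[\emptyset]$ is exactly right. The one genuine gap is the claim that a ``Moser/primitive-adjustment argument'' always upgrades $W = X'\setminus\varphi(\on{int}X)$ to an honestly \emph{exact} cobordism. Along $\varphi(\partial X)$ both $\lambda'$ and $\varphi_*\lambda$ are primitives of $\omega'$, so their difference is closed, but the obstruction to adjusting the primitive is the class $[\varphi^*\lambda'-\lambda]\in H^1(\partial X;\R)$, which need not vanish. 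For example, fiberwise translation of $D^*T^2$ by a closed non-exact $1$-form is a symplectic embedding into a slightly larger copy of itself for which $\varphi^*\lambda_{\mathrm{can}}-\lambda_{\mathrm{can}}$ is closed and non-exact, so no primitive of $\omega'$ on the complement restricts to the prescribed contact forms on both ends. Your step therefore fails for general Liouville domains, which is the generality in which the proposition is stated.

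The statement is still true at that level of generality, and the fix used in \cite{mh4} is to invoke cobordism maps for \emph{weakly exact} cobordisms ($\omega|_W$ exact, primitive unconstrained on the boundary), or equivalently the strong cobordism maps of Theorem \ref{thm:ECH_as_SFT} applied to the relative class $[\Sigma]=0$. The key observation is that for nullhomologous orbit sets $\Gamma_\pm$ --- the only ones relevant to the capacities, which are defined in the class $[\Gamma]=[0]$ --- any primitive $\mu$ of $\omega|_W$ satisfies $\int_{\Gamma_\pm}\mu=\int_{\Gamma_\pm}\alpha_\pm$ because $\mu-\alpha_\pm$ is closed on the boundary and $\Gamma_\pm$ bounds; hence $\rho[\Sigma]=0$ and the map still respects the action filtration without any exactness of the collar data. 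You should either restrict the statement to the case $H^1(\partial X;\R)=0$ (which covers the star-shaped domains actually used in this paper, where $\partial X\cong S^3$ and your adjustment goes through verbatim) or replace the exactness claim by this weakly exact argument.
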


The ECH groups are the homology of an ECH chain group $\text{ECC}(Y,\alpha,J)$ depending on a choice of non-degenerate\footnote{A non-degenerate contact form is one where the differential of the Poincare return map along any orbit has no $1$-eigenvalues.} contact form $\alpha$ and a complex structure $J$ on the symplectization of $Y$ satisfying certain conditions. The chain group is freely generated over $\Z/2$ by \emph{orbit sets}
\[\Gamma = \{(\gamma_i,m_i)\}_{i=1}^k \qquad \gamma_i \text{ is an embedded Reeb orbit and }m_i \in \Z_+\]
satisfying the condition that $m_i = 1$ if the orbit $\gamma_i$ is hyperbolic. Given an element $x$ of $\text{ECC}(Y,\alpha,J)$ and an orbit set $\Gamma$, we denote the $\Gamma$-coefficient of $x$ by $\langle x,\Gamma\rangle$. The differential
\[\partial:\text{ECC}(Y,\alpha,J) \to \text{ECC}(Y,\alpha,J)\] is defined by a holomorphic curve count. More precisely, if $\Gamma_+ = \{(\gamma_i,m_i)\}_1^k$ and $\Gamma_- = \{(\eta_i,n_i)\}_1^l$ are admissible orbit sets, then the $\Gamma_-$-coefficient of $\partial \Gamma_+$ is given by
\[\langle \partial \Gamma_+,\Gamma_-\rangle = \# \mathcal{M}_1(Y,J)/\R\]
Here $\# \mathcal{M}_1(Y,J)/\R$ is a count of (possibly disconnected) holomorphic curves in the symplectization of $Y$ that have ECH index $1$ with positive ends at $\Gamma_+$ and negative ends at $\Gamma_-$. The \emph{ECH index} $I(C)$ of a homology class in $C \in H_2(Y,\Gamma_+ \cup \Gamma_-)$ is defined by 
\begin{equation} \label{eqn:ECH_index}
I(C) = \langle c_\tau(\xi),C\rangle + Q_\tau(C,C) + \sum_{i=1}^k \sum_{j=1}^{m_i} \CZ_\tau(\gamma^i_j) - \sum_{i=1}^l \sum_{j = 1}^{n_i} \CZ_\tau(\eta^i_j)
\end{equation}
Here $c_\tau(\xi)$ is the relative 1st Chern class, $Q_\tau(C,C)$ is the relative intersection form and $\CZ_\tau$ is the Conley-Zehnder index (all relative to a trivialization $\tau$ of the contact structure).

\vspace{4pt}

Embedded contact homology has a vaguely TQFT-like structure, whereby certain types of cobordisms between contact manifolds induce maps on the (filtered) ECH groups.

\begin{definition} A \emph{strong symplectic cobordism} $X$ between contact manifolds $Y_\pm$ with contact form, denoted by
\[(X,\omega):(Y_+,\alpha_+) \to (Y_-,\alpha_-)\]
is a symplectic manifold $(X,\omega)$ with oriented boundary $\partial X = Y_+ - Y_-$ such that $\omega|_{Y_\pm} = \pm d\alpha_\pm$. The \emph{area class} $[\omega,\alpha_\pm] \in H^2(X,\partial X)$ of $(X,\omega)$ is the class of the relative de Rham cycle
\[
(\omega, \alpha_+ - \alpha_-) \in \Omega^2(X) \oplus \Omega^1(Y_+ \cup -Y_-)
\]
\end{definition}

We use $[\Sigma]:[\Gamma_+] \to [\Gamma_-]$ to denote a relative class in $H_2(X,\partial X)$ whose image under the boundary map $\partial:H_2(X,\partial X) \to H_1(\partial X)$ is given by
\[
[\Gamma_+] \oplus [\Gamma_-] \in H_2(Y_+) \oplus H_2(Y_-) \simeq H(\partial X)
\]
For convenience, we use $\rho[\Sigma] \to \R$ denote the pairing of $[\Sigma]$ with the area class $[\omega,\alpha_\pm]$. Explicitly, we have the formula
\[\rho[\Sigma] = \int_{\Sigma} \omega - \int_{\partial_+\Sigma} \alpha_+ + \int_{\partial_-\Sigma} \alpha_-\]
With the above notation, we can state the following result of Hutchings regarding the functoriality of ECH with respect to strong symplectic cobordisms.

\begin{thm}[Hutchings \cite{mh}] \label{thm:ECH_as_SFT} A strong symplectic cobordism $X:Y_+ \to Y_-$ and let $[\Sigma]:[\Gamma_+] \to [\Gamma_-]$ be a class in $H_2(X,\partial X)$. Then there is a canonical, ungraded map
\begin{equation}
\ECH^L(X;[\Sigma]):\ECH^L(Y_+,\alpha_+;[\Gamma_+]) \to \ECH^{L + \rho[\Sigma]}(Y_-,\alpha_-;[\Gamma_-])
\end{equation}
\begin{equation}
\ECH(X;[\Sigma]):\ECH(Y_+,\xi_+;[\Gamma_+]) \to \ECH(Y_-,\xi_-;[\Gamma_-])
\end{equation}
These maps are compatible with composition, and satisfy the following axioms.
\begin{itemize}
	\item[(a)] (Curve Counting) There exists a chain map $\Phi$ inducing $\ECH(X;[\Sigma])$, of the form
     \[\Phi^L:\text{ECC}_L(Y_+,\alpha_+;[\Gamma_+]) \to \text{ECC}^{L + \rho[\Sigma]}(Y_-,\alpha_-;[\Gamma_-])\]
     that ``counts curves'' in the following sense: if $\Gamma_\pm$ are orbit sets in $Y_\pm$ such that
     \[\langle \Phi_A(\Gamma_+),\Gamma_-\rangle = 1\]
     then there is a holomorphic current\footnote{This is a formal positive integer combination of embedded holomorphic curves.} $C$ of ECH index $1$ asymptotic at $\pm \infty$ to $\Gamma_\pm$.
	\item[(b)] (Filtration) The maps commute with the inclusion maps $\iota^K_L$ and $\iota_L$, e.g.
	\[\begin{tikzcd} 
     \ECH^L(Y_+,\alpha_+;[\Gamma_+]) \arrow{rr}{\ECH^L(X;[\Sigma])} \arrow{d}{} & \; & \ECH^{L + \rho[\Sigma]}(Y_-,\alpha_-;[\Gamma_-]) \arrow{d}{}\\
     \ECH^K(Y_+,\alpha_+;[\Gamma_+]) \arrow{rr}{\ECH^K(X;[\Sigma])}  & \; & \ECH^{K + \rho[\Sigma]}(Y_-,\alpha_-;[\Gamma_-]) \end{tikzcd}\]
	\item[(c)] (U-Map) The maps commute with the $U$-maps, e.g.
	\[U^{L + \rho[\Sigma]} \circ \ECH^L(X;[\Sigma]) = \ECH^L(X;[\Sigma]) \circ U^L\]
     \item[(d)] (Seiberg--Witten) Let $(P,\xi)$ be a contact $3$-manifold. Consider a pair of strong symplectic cobordisms and their composition, denoted by
     \[N:\emptyset \to P \qquad X:P \to \emptyset \quad\text{and}\quad Y = N \cup_Z X:\emptyset \to \emptyset\] 
     Fix homology classes $[A] \in H_2(N,Z)$ and $[B] \in H_2(X,Z)$ with $\partial[A] = \partial[B]$. Then
     \[\ECH(X,[B]) \circ U^k \circ \ECH(N,[A]) = \sum_{[C] \in S} \SW_Y([C])\]
     Here $S \subset H_2(X)$ is shorthand for the set of homology classes satisfying
     \[[C] \cap N = [A] \qquad [C] \cap X = [B] \quad\text{and}\quad I([C]) = 2k\]
\end{itemize}
\end{thm}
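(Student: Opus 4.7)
The plan is to follow Hutchings' construction in \cite{mh}, which uses Taubes's isomorphism between filtered ECH and a filtered version of Seiberg--Witten Floer cohomology to transfer cobordism maps from the Seiberg--Witten side to ECH. Concretely, I would define $\ECH^L(X;[\Sigma])$ as the image under this isomorphism of the standard Seiberg--Witten cobordism map on $X$, associated to the spin-c structure determined by $[\Sigma]$. Functoriality under composition is built into Seiberg--Witten theory, so the compatibility with composition follows automatically, and the unfiltered map $\ECH(X;[\Sigma])$ is obtained by passing to the colimit $L \to \infty$.

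Next I would verify the axioms. Axiom (a) (Curve Counting) is the crux of Taubes's SW$=$Gr correspondence in the cobordism setting: under the ECH/SWF identification, Seiberg--Witten solutions on $X$ in the given spin-c structure correspond to ECH index $1$ holomorphic currents asymptotic to the relevant orbit sets, which yields the chain-level description $\Phi^L$. Axiom (b) (Filtration) reduces to the standard energy estimate: for a holomorphic current $C$ in $X$ asymptotic to $\Gamma_\pm$, applying Stokes's theorem to the relative de Rham cycle $(\omega, \alpha_+ - \alpha_-)$ along $C$ gives
\[ 0 \le \int_C \omega = \mathcal{A}(\Gamma_+) - \mathcal{A}(\Gamma_-) + \rho[\Sigma], \]
where $\mathcal{A}$ denotes total symplectic action. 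Hence $\Phi^L$ shifts filtration by exactly $\rho[\Sigma]$. Axiom (c) ($U$-map compatibility) follows from a standard moving-base-point chain homotopy, since the $U$-map is defined by a count of ECH index $2$ curves through a marked point which can be chosen in the interior of $X$.

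Axiom (d) (Seiberg--Witten) is the technical heart of the theorem. For the composed cobordism $Y = N \cup_Z X$ with both outer ends empty, functoriality identifies the composition $\ECH(X;[B]) \circ U^k \circ \ECH(N;[A])$ on the SWF side with a map $\Z/2 \to \Z/2$ that counts Seiberg--Witten solutions on the closed manifold $Y$ in spin-c structures $[C]$ satisfying $[C] \cap N = [A]$ and $[C] \cap X = [B]$; the insertion of $U^k$ between the two halves imposes passage through $k$ generic points, cutting the moduli space down to dimension $0$, which matches the index condition $I([C]) = 2k$. Summing over compatible $[C]$ yields precisely $\sum_{[C] \in S} \SW_Y([C])$ by the gluing formula for Seiberg--Witten invariants along $Z$. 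The main obstacle, and the part most deserving of care, is the precise bookkeeping of spin-c structures and dimension shifts in this gluing formula, along with ensuring the filtered constructions are well-defined independently of auxiliary data; these technical details are established in \cite{mh}, so my proof would amount to quoting that construction and checking the four axioms against it as above.
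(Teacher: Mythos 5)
Your proposal follows essentially the same route as the paper's own (sketch of a) proof: both define the cobordism maps by intertwining the filtered Taubes isomorphism with Seiberg--Witten/monopole Floer cobordism maps for the spin-c structure determined by $[\Sigma]$, deduce composition, (c) and (d) on the Seiberg--Witten side, and obtain (a)--(b) from the instanton-to-holomorphic-current compactness argument together with the energy identity involving $\rho[\Sigma]$, deferring the technical details to \cite{mh} and \cite{ht2013}. This matches the paper, which likewise only outlines the argument and cites those references for the full construction.
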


The analogue functoriality result for exact symplectic cobordisms is well established \cite[Theorem 1.9]{ht2013} and has been used extensively, e.g. to define the ECH capacities \cite{mh4}. Non-exact cobordisms and the foundations provided by \cite{mh} have been used to define Gromov-Taubes invariants for non-symplectic manifolds \cite{g1,g2}. 

\begin{remark}[Proof of Theorem \ref{thm:ECH_as_SFT}] Since a detailed treatment of Theorem \ref{thm:ECH_as_SFT} has yet to appear in the literature outside of \cite{mh}, we include a brief discussion of its proof. It is similar to the exact case in \cite{ht2013} with some small modifications.

 The basic strategy of \cite{ht2013} and \cite{mh} is to establish a filtered version of the Taubes isomorphism between filtered ECH and an energy filtered version of Kronheimer-Mrowka's monopole Floer homology (MFH) groups \cite[\S 3]{ht2013}. Cobordism maps on filtered ECH can then be \emph{defined} so that they intertwine the analogous maps on filtered MFH via these isomorphisms \cite[\S 5.1]{ht2013}. Theorem \ref{thm:ECH_as_SFT}(c)-(d) follow more or less immediately from this strategy \cite[Cor 5.3]{ht2013}.

The proof in \cite[\S 6]{ht2013} of the analogue of Theorem \ref{thm:ECH_as_SFT}(a)-(b) uses a well-known argument for producing instantons counted in MFH cobordism maps from holomorphic curves \cite[\S 6.2]{ht2013} and an SFT/Gromov compactness argument \cite[\S 6.4]{ht2013}. In the non-exact setting, the required compactness can be guaranteed by only considering cobordism maps $\on{MFH}(X,\mathfrak{s})$ in MFH induced by a symplectic cobordism $(X,\omega)$ equipped with a specific spin-c structure $\mathfrak{s}$, determined by a fixed relative homology class $[\Sigma]:[\Gamma_+] \to [\Gamma_-]$ with $\mathfrak{s} = \mathfrak{s}_\omega + PD[\Sigma]$. The energy of the instantons and holomorphic curves involved in $\on{MFH}(X,\mathfrak{s})$ obey a uniform bound in terms of the actions of the ends $\Gamma_\pm$ and $\rho[\Sigma]$. In particular, the moduli space of curves admits a compactification in the SFT topology and the arguments of \cite[\S 6]{ht2013} can be slightly modified to handle this case. \end{remark}

\subsection{From ECH to SW} \label{subsec:ECH_to_SW} We now conclude the section by applying the formal structure of the ECH groups in \S \ref{subsec:ECH} to estimate for the ECH capacities of a star-shaped domain embedded into closed symplectic manifolds.

\begin{prop} \label{prop:embedding_ECH_bound} Let $(X,\lambda) \subset \R^4$ be a star-shaped domain with restricted Liouville form $\lambda$ and let $(Y,\omega)$ be a closed symplectic $4$-manifold. Fix an embedding
\[\iota:(X,d\lambda) \to (Y,\omega)\]
Then the ECH capacities of $X$ satisfy
\begin{equation} \label{eqn:embedding_ECH_bound}
c_k(X) \le \inf_{[\Sigma] \in \SW(Y)}\{\langle \omega,[\Sigma]\rangle \; : \; I([\Sigma]) \ge 2k \}
\end{equation}
\end{prop}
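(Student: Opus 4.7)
The plan is to formalize the neck-stretching heuristic of \S \ref{subsec:intuitive_argument} by applying the Seiberg--Witten axiom Theorem \ref{thm:ECH_as_SFT}(d) to the decomposition $Y = N \cup X$ induced by $\iota$, where $N := Y \setminus \iota(X)^\circ$. Fix $[\Sigma] \in \SW(Y)$ with $I([\Sigma]) \ge 2k$ and set $k' := I([\Sigma])/2$ (an integer, since $I(A) \equiv c_1 \cdot A + A^2 \equiv 2\, c_1 \cdot A \equiv 0 \pmod{2}$ by the Wu formula). Because the ECH capacities are monotone in $k$ --- if $U^{k'}\iota_L \sigma = [\emptyset]$ then $U^k\iota_L\bigl((U^L)^{k'-k}\sigma\bigr) = [\emptyset]$ by the $U$-map/filtration compatibility of Theorem \ref{thm:ECH_as_SFT}(c) --- it suffices to prove $c_{k'}(X) \le \langle \omega, [\Sigma]\rangle$. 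After a standard perturbation we may assume $\partial X$ is smooth with non-degenerate contact form.

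View $X$ as a Liouville cobordism $X : \partial X \to \emptyset$ and $N$ as the complementary cobordism $N : \emptyset \to \partial X$. Since $X$ is star-shaped it is diffeomorphic to $B^4$ and $\partial X \cong S^3$, so $H_2(X) = H_2(X, \partial X) = H_1(\partial X) = 0$; Mayer--Vietoris then identifies $H_2(N, \partial X) \cong H_2(Y)$ and forces $[\Sigma]$ to split uniquely as $[A] \in H_2(N,\partial X) \leftrightarrow [\Sigma]$ together with $[B] = 0 \in H_2(X,\partial X)$, with $\partial [A] = \partial [B] = 0$. Setting $L := \langle \omega, [\Sigma]\rangle$, the additivity of the area class under composition of cobordisms (equivalently, a Stokes calculation using $d\lambda|_{\partial X} = \omega|_{\partial X}$ together with $H_1(\partial X) = 0$ to relate $\rho[A]$ to the integral over a cycle representative of $[\Sigma]$) yields $\rho[A] + \rho[B] = L$.

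Set $\sigma := \ECH^0(N, [A])([\emptyset]) \in \ECH^L(\partial X; [0])$, using filtration axiom (b). Axiom (d) gives
\[
\ECH(X, 0) \circ U^{k'} \circ \ECH(N, [A])([\emptyset]) = \sum_{[C] \in S} \SW_Y([C]) = \SW_Y([\Sigma]) \equiv 1 \pmod{2},
\]
since the conditions $[C] \cap N = [A]$, $[C] \cap X = 0$ together with $H_2(X) = 0$ force $S = \{[\Sigma]\}$. The ECH index formula \eqref{eqn:ECH_index} shows that $\ECH(N, [A])$ shifts absolute ECH grading by $2k'$ while $\ECH(X, 0)$ preserves grading (as $[B] = 0$ and $H_2(X) = 0$), so the composition through the closed manifold has total shift $0$, matching the target $\ECH(\emptyset) = \Z/2$ concentrated in grading $0$. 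By Proposition \ref{prop:ECH_of_3_sphere} the grading-$2k'$ part of $\ECH(\partial X; [0])$ is the one-dimensional $\Z/2$-span of $U^{-k'}$, so the nonvanishing above pins down $\iota_L(\sigma) = U^{-k'}$ and hence $U^{k'}(\iota_L \sigma) = [\emptyset]$. This witnesses $c_{k'}(X) \le L$, and taking the infimum over $[\Sigma]$ gives \eqref{eqn:embedding_ECH_bound}.

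The main obstacle is the grading bookkeeping of the third paragraph: one must carefully verify via \eqref{eqn:ECH_index} that the cobordism maps $\ECH(N, [A])$ and $\ECH(X, 0)$ have the asserted grading shifts --- computing $c_\tau[A] + Q_\tau([A],[A])$ relative to a framing of $\partial X$ and confirming that the boundary correction terms cancel across the gluing to recover $c_1(Y)\cdot[\Sigma] + [\Sigma]^2 = I([\Sigma])$. A secondary technical issue is extending from smooth star-shaped domains to arbitrary ones, handled by exhausting $X$ by smooth star-shaped subdomains with non-degenerate contact boundary and invoking the Inclusion axiom of Proposition \ref{prop:ECH_capacities_axioms}.
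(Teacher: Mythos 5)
Your proposal is correct and follows essentially the same route as the paper: decompose $Y$ into the cobordisms $N:\emptyset\to\partial X$ and $X:\partial X\to\emptyset$, apply the Seiberg--Witten axiom of Theorem \ref{thm:ECH_as_SFT}(d) together with the fact that $[\Sigma]$ is the unique class with the prescribed restrictions, and use Proposition \ref{prop:ECH_of_3_sphere} to identify the resulting nonzero class with $U^{-k'}$. The grading step you flag as the main obstacle is exactly what the paper resolves, via the curve-counting axiom (a) and additivity of the ECH index across the splitting.
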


\begin{remark} This result is based on the proofs in \cite[\S 2.2]{mh}. \end{remark}

\begin{proof} Let $(Z,\alpha)$ be the contact boundary of $(X,\lambda)$ and let $[\Sigma] \in H_2(Y)$ be any $\Z$-homology class satisfying the constraints laid out in (\ref{eqn:embedding_ECH_bound}).
\[ \SW_Y([\Sigma]) = 1 \mod 2 \; \text{ and } \; I([\Sigma]) \ge 2k\]
It suffices to demonstrate the following inequality for any such $[\Sigma]$.
\[c_k(X) \le A := \langle \omega,[\Sigma]\rangle\]
Since $c_k(X) \le c_j(X)$ for $j = I([\Sigma])/2$, we can assume that $k = j = I([\Sigma])/2$. Furthermore, it is equivalent to show that for all $\epsilon > 0$ sufficiently small, there exists a class
\begin{equation} \label{eqn:eta_formula} \eta \in \ECH^{A + \epsilon}(Z,\xi;[0]) \quad\text{with}\quad U^k\iota_A\eta = [\emptyset] \in \ECH(Z,\xi;[0])\end{equation}

To find an $\eta$ that satisfies (\ref{eqn:eta_formula}), we consider the splitting of $Y$ into $X$ (or rather, the image $\iota(X)$) and $N = Y \setminus X$. If we denote the contact boundary of $X$ by $(Z,\xi)$, we can interpret this as pair of strong symplectic cobordisms
\[N:\emptyset \to Z \qquad X:Z \to \emptyset\]
Since $X$ is diffeomorphic to a $4$-ball, the pair of maps
\[
H_2(Y) \xrightarrow{- \cap X} H_2(X,\partial X) \quad\text{and}\quad H_2(Y) \xrightarrow{- \cap P} H_2(P,\partial P)
\]
are, respectively, the $0$ map and an isomorphism. Let $[S] = [\Sigma] \cap X$ be the intersection of $[\Sigma]$ with $X$. Note that we have
\[
A = \langle [\omega],[\Sigma]\rangle = \rho[S] + \rho[0] = \rho[S] \]
Now we let $\epsilon > 0$ be small and arbitrary, and define the desired class $\eta$ by
\[
\eta = \ECH^A(P;[S])[\emptyset] \in \ECH^{A + \epsilon}(Z,\xi;[0]) \quad\text{where}\quad [\emptyset] \in \ECH^\epsilon(\emptyset;[0]) \simeq \Z/2[\emptyset]
\]

We would like to show that $U^k \iota_{A + \epsilon} \eta = [\emptyset]$. To start, pick a chain map lifting the ECH cobordism map as in Thm. \ref{thm:ECH_as_SFT}(a). That is,
\[
\Phi:\Z/2 \to \text{ECC}^{\epsilon + A}(Z,\alpha;[0]) \quad \text{with} \quad [\Phi(\emptyset)] = \eta
\]
If $\Gamma_-$ is any orbit set such that $\langle \Phi(\emptyset),\Gamma_-\rangle = 1$, then by Theorem \ref{thm:ECH_as_SFT}(a) we know that there is a holomorphic current $C$ of ECH index $0$ with empty positive boundary and negative boundary $\Gamma_-$. If we let $C' \subset Z$ be a surface with positive boundary $\Gamma_-$, so that $|\Gamma_-| = I(C')$, then by the additivity of the ECH index we have
\[2k =I([\Sigma]) = I(C) + I(C') = I(C') = |\Gamma_-|\]
Thus we know that $\eta$ is homogenous of grading $2k$, and so $U^k \circ \iota_{A + \epsilon}(\eta)$ is grading $0$. In particular, by Proposition \ref{prop:ECH_of_3_sphere}, we have
\[U^k\iota_{A + \epsilon}\eta \in ECH_0(Z,\xi;[0]) \simeq ECH_0(S^3;[0]) = \Z/2[\emptyset]\]
On the other hand, by Theorem \ref{thm:ECH_as_SFT}(b) and (d), we know that
\[\ECH(X;[0]) \circ U^k \circ \iota_{A + \epsilon} \eta = \ECH^A(X;[0]) \circ U^k \circ \ECH(X;[S]) [\emptyset] = c\]
Here $c \in \Z/2$ is the sum over $[C]$ with $[C] \cap X = [0]$ and $[C] \cap P = [S]$ of $\SW_Y([C]) \mod 2$. Since $[\Sigma]$ is the unique such class and $\SW_Y([\Sigma]) = 1 \mod 2$, we find that $c = 1$. Thus, $U^k\iota_{A + \epsilon}\eta$ is non-zero and we must have
\[
U^k\iota_{A + \epsilon}\eta = [\emptyset]
\]
This proves that for every $\epsilon$, there is a class $\eta \in ECH^{A + \epsilon}(Z,\xi;[0])$ satisfying (\ref{eqn:eta_formula}), and thus concludes the proof.\end{proof}

\begin{remark} The proof of Proposition \ref{prop:embedding_ECH_bound} generalizes immediately to Liouville domains $(X,\lambda)$ that satisfy the following criteria.
\begin{itemize}
	\item[(a)] The map $H_2(\partial X) \xrightarrow{\iota_*} H_2(X)$ is $0$.
	\item[(b)] The contact manifold $(\partial X,\xi)$ has torsion chern class, i.e. $c_1(\xi) = 0 \in H^2(\partial X;\Q)$.
	\item[(c)] The empty set $[\emptyset]$ is the unique class of ECH grading $0$ in the image of the $U$-map.
\end{itemize}
The conclusion of Proposition \ref{prop:embedding_ECH_bound} must be appropriately modified so that (\ref{eqn:embedding_ECH_bound}) is a minimum over all classes $[\Sigma]$ such that $[\Sigma] \cap X = 0$. In practice, the most difficult criterion to verify is (c). This holds, for instance, when $[\emptyset]$ is the unique ECH index $0$ class. It is also believed to hold for circle bundles over a $2$-sphere (c.f.~the unpublished thesis of Ferris \cite{mf} and the forthcoming work of Nelson--Weiler \cite{nw}).
\end{remark}

\section{Algebraic capacities and birational geometry} \label{sec:alg_cap_bir} We now construct of the algebraic capacities (\S \ref{subsec:cons_alg_cap}) and prove Theorem \ref{thm:main} (\S \ref{subsec:ech_alg}). 

\begin{conventions} In this section, all surfaces will be projective normal algebraic surfaces over the complex numbers, not necessarily smooth, unless otherwise specified. \end{conventions}

Let $\bK\in\{\Z,\Q,\R\}$. We work in the N\'eron--Severi group $\on{NS}(Y)\subseteq H^2(Y,\Z)$ of Weil $\Z$-divisors regarded up to algebraic equivalence. We denote $\on{NS}(Y)_\bK:=\on{NS}(Y)\otimes_\Z\bK$. We say that a $\Z$-divisor $D$ on a surface $Y$ is $\Q$\emph{-Cartier} if some integer multiple of $D$ is Cartier; that is, the sheaf $\mO(D)$ is a line bundle. $Y$ is said to be $\Q$\emph{-factorial} if every Weil $\Z$-divisor on $Y$ is $\Q$-Cartier. Every toric surface is $\Q$-factorial. A $\Q$-Cartier $\R$-divisor $D$ on $Y$ is \emph{nef} if $D\cdot C\geq0$ for all curves $C\subseteq Y$. Denote by $\on{Nef}(Y)_\bK$ the classes in $\on{NS}(Y)_\bK$ corresponding to nef divisors.

\subsection{Construction of algebraic capacities} \label{subsec:cons_alg_cap}

Let $Y$ be a $\Q$-factorial projective surface and let $A$ be an ample $\R$-divisor on $Y$. We recall the optimisation problems of \cite{bwo,bwr,bwt} that are designed to emulate ECH capacities in the context of algebraic geometry.

\begin{definition}[{\cite[\S4.5]{bwo} or \cite[Def.~2.2]{bwr}}] \label{def:alg_cap} The $k$\emph{th algebraic capacity} of $(Y,A)$ are given by
\begin{equation} \label{eqn:alg_cap_def} \calg_k(Y,A):=\inf_{D\in\on{Nef}(Y)_\Z}\{D\cdot A:\chi(D)\geq k+\chi(\mO_Y)\}\end{equation}
\end{definition}

\begin{remark} Note that it follows from Kleiman's criterion for nef-ness that this infimum in (\ref{eqn:alg_cap_def}) is always achieved. \end{remark}

The \emph{index} of a $\Z$-divisor $D$ on $Y$ is given by $I(D):=D\cdot (D-K_Y)$. When $Y$ is smooth or has at worst canonical singularities \cite{ypg} we have Noether's formula
\begin{equation} \label{eqn:noether} \chi(D)=\chi(\mO_Y)+\frac{1}{2}I(D)\end{equation}
Furthermore, if $\omega_A$ is the Kahler class induced by $A$ via the embedding into $\pr H^0(k\mO(A))$ for $k \gg 0$ (which is defined because $A$ is ample) we may write
\begin{equation} \label{eqn:area_vs_intersection} D\cdot A = \langle \omega_A,D\rangle = \int_D\omega_A\end{equation}
In these cases, we can alternatively write the algebraic capacities as
\begin{equation} \label{eqn:calg_intermiediate}\calg_k(Y,A)=\inf_{D\in\on{Nef}(Y)_\Z}\{\langle \omega_A,D\rangle \; : \; I(D)\geq 2k\}\end{equation}
which is very similar to the upper bound for $\cech_k$ in Proposition \ref{prop:embedding_ECH_bound}.

\subsection{Relating ECH capacities and algebraic capacities} \label{subsec:ech_alg}

We seek to prove the following result.

\begin{thm}\label{thm:sw_nef} Suppose $Y$ is a smooth rational surface, and let $A$ be an ample $\R$-divisor on $Y$. Then
$$\inf_{D\in\on{SW}(Y)}\{D\cdot A:I(D)\geq 2k\}=\inf_{D\in\on{Nef}(Y)_\Z}\{D\cdot A:I(D)\geq 2k\}=:\calg_k(Y,A)$$
\end{thm}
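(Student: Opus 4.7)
The plan is to prove the two inclusions between the underlying feasible sets and compare infima. One direction is essentially immediate: any $D \in \on{Nef}(Y)_\Z$ is pseudo-effective since the nef cone is contained in the closure of the effective cone, so $D \in \neb(Y)$; combined with the hypothesis $I(D) \ge 2k \ge 0$, Proposition~\ref{prop:rational_surfaces} gives $D \in \on{SW}(Y)$. Hence
\[
\{D \in \on{Nef}(Y)_\Z : I(D) \ge 2k\} \subseteq \{D \in \on{SW}(Y) : I(D) \ge 2k\},
\]
so the infimum over the smaller (nef) set dominates the infimum over the larger (SW) one.

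For the reverse inequality, the strategy is to show that any $D \in \on{SW}(Y)$ with $I(D) \ge 2k$ can be replaced by a nef $\Z$-divisor $D^\star$ satisfying $I(D^\star) \ge 2k$ and $D^\star \cdot A \le D \cdot A$, via an iterative ``negative-curve contraction'' procedure. By Proposition~\ref{prop:sw_in_ne} (in its effective formulation from the proof), $D$ is either zero or effective; the case $D = 0$ only arises for $k \le 0$, in which both infima are easily seen to equal $0$, so I may assume $D$ is a nonzero effective $\Z$-divisor. If $D$ is already nef, take $D^\star := D$. Otherwise there exists an irreducible curve $C$ with $D \cdot C < 0$, and such a $C$ must appear in the support of $D$ (for any irreducible curve outside the support meets every component of $D$ non-negatively), so $D - C$ remains an effective $\Z$-divisor.

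The key calculation is that this replacement strictly decreases the pairing with $A$ while weakly increasing the index. Ampleness of $A$ gives $C \cdot A > 0$, so $(D - C) \cdot A < D \cdot A$. For the index,
\[
I(D - C) - I(D) \;=\; -2\, D \cdot C + C^2 + C \cdot K_Y \;\ge\; -2\, D\cdot C - 2 \;\ge\; 0,
\]
where the first inequality uses adjunction $C^2 + C \cdot K_Y = 2 p_a(C) - 2 \ge -2$, and the second uses the integrality $D \cdot C \in \Z_{<0}$, which forces $D \cdot C \le -1$. Writing $D = \sum a_i F_i$ in irreducible components, each iteration decreases $\sum a_i$ by exactly one, so the procedure terminates in finitely many steps at a divisor $D^\star$ that is nef by construction (no negative curve remains), effective, and satisfies $I(D^\star) \ge I(D) \ge 2k$ with $D^\star \cdot A \le D \cdot A$. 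Therefore $\calg_k(Y,A) \le D^\star \cdot A \le D \cdot A$ for every admissible $D \in \on{SW}(Y)$, which yields the desired inequality $\calg_k(Y,A) \le \inf_{D \in \on{SW}(Y)}\{D \cdot A : I(D) \ge 2k\}$ and completes the equality.

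The main subtlety is the index estimate $I(D - C) \ge I(D)$: it depends on the adjunction bound $C^2 + C \cdot K_Y \ge -2$ (the non-negativity of the arithmetic genus) conspiring with the integrality gap $D \cdot C \le -1$ so that the two constants $-2$ exactly cancel. Termination, by contrast, is automatic from working with $\Z$-divisors and strictly decreasing a non-negative integer multiplicity sum.
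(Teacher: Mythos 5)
Your proof is correct, and the second (harder) direction takes a genuinely different and noticeably simpler route than the paper. The paper constructs the preferable nef divisor via the isoparametric transform $\ip_Y(D)=D-\sum_{D\cdot D_i<0}\lc D\cdot D_i/D_i^2\rc D_i$ of \cite{phys}, whose index estimate (Lemma \ref{lem:iso_index_pref}) fails in the presence of $(-1)$-curves; this forces an induction on the number of blow-downs to a minimal model, with $(-1)$-curves handled separately by writing $D=\pi_E^*\ol{D}+mE$ and passing to the contracted surface. You instead subtract a single copy of a single negative curve per step, and the index monotonicity $I(D-C)-I(D)=-2D\cdot C+(C^2+C\cdot K_Y)\ge 2-2=0$ follows uniformly from adjunction ($p_a(C)\ge 0$ for irreducible $C$ on a smooth surface) together with the integrality gap $D\cdot C\le -1$; there is no special case for $(-1)$-curves and no MMP induction, and termination is immediate from the decreasing coefficient sum. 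The supporting details are all in order: you correctly invoke genuine effectiveness of $\on{SW}$-classes (from the Gromov--Taubes proof of Proposition \ref{prop:sw_in_ne}, not merely pseudo-effectivity), correctly observe that a negative curve must be a component of $D$ so that $D-C$ stays effective, and correctly note that area only decreases since $A$ is ample. What the paper's heavier machinery buys is that $\ip_Y$ preserves $h^0$ exactly (Theorem \ref{thm:phys}), which matters for the $h^0$-based formulations elsewhere in \cite{bwo}; for the index-based statement of Theorem \ref{thm:sw_nef} your elementary argument suffices and is self-contained.
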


By combining Proposition  \ref{prop:embedding_ECH_bound}, the formula (\ref{eqn:calg_intermiediate}) and Theorem \ref{thm:sw_nef}, we immediately acquire the main result, which we state again for completeness.

\begin{thm}\label{thm:main} Suppose that $X \to Y$ is a symplectic embedding of a star-shaped domain $X$ into a smooth rational projective surface $Y$ with a ample $\R$-divisor $A$ and symplectic form $\omega_A$ satisfying $[\omega_A] = \text{PD}[A]$. Then
\begin{equation} \label{eqn:main} \tag{$\star$}
\cech_k(X)\leq\calg_k(Y,A)\end{equation}
\end{thm}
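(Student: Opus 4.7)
The plan is essentially to chain together the three ingredients that have already been assembled in the paper: Proposition \ref{prop:embedding_ECH_bound}, the alternative formula (\ref{eqn:calg_intermiediate}) for $\calg_k$, and Theorem \ref{thm:sw_nef}. The strategy is that all the heavy lifting has been delegated to the preparatory results, so the final statement should follow by essentially formal manipulation.

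First I would invoke Proposition \ref{prop:embedding_ECH_bound} on the embedding $\iota: (X,d\lambda) \to (Y,\omega_A)$, noting that since $X$ is a star-shaped domain it is in particular a Liouville domain of the required type. This yields the estimate
\[
\cech_k(X) \le \inf_{[\Sigma] \in \SW(Y)}\bigl\{\langle \omega_A, [\Sigma]\rangle : I([\Sigma]) \ge 2k\bigr\}.
\]
Next I would use the hypothesis $[\omega_A] = \mathrm{PD}[A]$ to rewrite the pairing on the right-hand side as an intersection number on $Y$, via the standard identity $\langle \omega_A, [\Sigma]\rangle = A \cdot [\Sigma]$ recorded in (\ref{eqn:area_vs_intersection}). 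This converts the bound into
\[
\cech_k(X) \le \inf_{D \in \SW(Y)}\bigl\{D \cdot A : I(D) \ge 2k\bigr\}.
\]

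Finally, I would apply Theorem \ref{thm:sw_nef}, which identifies the right-hand side with the corresponding infimum over integral nef classes, and hence with $\calg_k(Y,A)$ by formula (\ref{eqn:calg_intermiediate}) (this last step implicitly uses that $Y$ is smooth, so Noether's formula (\ref{eqn:noether}) lets us replace the Euler characteristic condition $\chi(D) \ge k + \chi(\mathcal{O}_Y)$ by the index condition $I(D) \ge 2k$). Combining these gives the desired inequality $\cech_k(X) \le \calg_k(Y,A)$.

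There is really no serious obstacle in this final assembly; the genuine technical content lives inside Proposition \ref{prop:embedding_ECH_bound} (the neck-stretching / Seiberg--Witten argument that bounds $\cech_k$ by Seiberg--Witten non-zero classes) and Theorem \ref{thm:sw_nef} (the algebro-geometric computation identifying the Seiberg--Witten-constrained infimum with the nef-constrained infimum). If one were writing this from scratch, the hard part would be Theorem \ref{thm:sw_nef}: showing that one can always replace a mod-$2$ Seiberg--Witten non-zero class by a nef class of no greater $A$-degree and equal or larger index, which by Proposition \ref{prop:rational_surfaces} amounts to a birational/minimal-model argument in $\neb(Y)$. However, given the statements listed earlier in the excerpt, the proof of Theorem \ref{thm:main} itself is a clean two-line deduction.
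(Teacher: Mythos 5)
Your proposal is correct and matches the paper's own argument exactly: the paper likewise derives Theorem \ref{thm:main} by chaining Proposition \ref{prop:embedding_ECH_bound}, the rewriting (\ref{eqn:calg_intermiediate}) of $\calg_k$ via Noether's formula and (\ref{eqn:area_vs_intersection}), and Theorem \ref{thm:sw_nef}. You also correctly locate the genuine technical content in those preparatory results rather than in this final assembly.
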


\begin{remark} We only require an upper bound of the Seiberg-Witten quantity by $\calg_k$ for the purposes of this paper. However, Theorem \ref{thm:sw_nef} is satisfying because it demonstrates that the algebraic capacities are (as obstructions) just as sensitive as the Seiberg-Witten theoretic quantities. \end{remark}

We treat the case of smooth rational surfaces using the Minimal Model Program. To begin, recall that the nef cone is contained within the effective cone, i.e.
\[\on{Nef}(Y)_{\Z} \subseteq \neb(Y)\]
We calculated the Seiberg-Witten theory of a rational surface in Proposition \ref{prop:rational_surfaces}. That calculation implies the inequality of Theorem \ref{thm:sw_nef}
\[
\inf_{D\in\on{SW}(Y)}\{D\cdot A:I(D)\geq 2k\} = \inf_{D\in\neb(Y)}\{D\cdot A:I(D)\geq 2k\}
\]
This immediately implies that we have an inequality in one direction.
\begin{equation} \label{eqn:inf_SW_vs_NEF} \inf_{D\in\on{SW}(Y)}\{D\cdot A:I(D)\geq 2k\}\leq\inf_{D\in\on{Nef}(Y)_{\Z}}\{D\cdot A:I(D)\geq 2k\}\end{equation}
For the converse inequality, we will show for that each Seiberg--Witten nonzero divisor there is a nef divisor that is `preferable' from the perspective of the optimisation problems above. For this purpose, we adopt the following terminology.

\begin{definition} \label{def:pref} Let $Y$ be a $\Q$-factorial surface. We say that a Weil $\Q$-divisor $D_0$ is 
\begin{itemize}
\item[(a)] \emph{index-preferable} to another Weil $\Q$-divisor $D$ if $I(D_0)\geq I(D)$ and
\item[(b)] \emph{area-preferable} $D$ if $D_0\cdot A\leq D\cdot A$ for all ample $\R$-divisors $A$ on $Y$.
\end{itemize}
A Weil $\Q$-divisor $D_0$ that is both area- and index-preferable will simply be called \emph{preferable}. Note that $D_0$ is area-preferable to $D$ if and only if $D-D_0$ is effective.
\end{definition}

To construct preferable divisors in general we will use the \emph{isoparametric transform} $\ip_Y$ of \cite{phys}. This takes an effective divisor $D$ to a new divisor $\ip_Y(D)$ given by
\begin{equation} \label{eqn:ip_def} \ip_Y(D) := D-\sum_{D\cdot D_i<0}\lc\frac{D\cdot D_i}{D_i^2}\rc D_i\end{equation}
Here the sum is over prime divisors $D_i$ with $D\cdot D_i<0$ and, in particular, $\ip_Y(D) = D$ if $D$ is nef. We denote by $\ip^n_Y(D)$ the result of iterating $\ip^n_Y$ $n$ times. In \cite{phys}, the following result is proven.

\begin{thm}[{\cite[Thm.~1.1 + 1.2]{phys}}] \label{thm:phys} For any effective divisor $D$ on a smooth surface $Y$ we have
$$h^0(D)=h^0(\ip_Y(D))$$
Then for all sufficiently large $n \gg 0$, we have $\ip^n_Y(D) = \ip^\infty_Y(D)$ for some nef $\ip^\infty_Y(D) \in \on{Nef}(Y)_\Z$. 
\end{thm}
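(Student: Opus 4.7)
The plan is to handle the two assertions separately: first the $h^0$-invariance, then the eventual stabilization at a nef divisor.

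For the identity $h^0(D)=h^0(\ip_Y(D))$, the starting observation is that any prime divisor $D_i$ with $D\cdot D_i<0$ must have $D_i^2<0$ (otherwise its class lies in the closure of the nef cone and cannot meet an effective class negatively) and must appear in the fixed part of $|D|$. I would formalize this via the short exact sequence
\[0 \to \mO_Y(D - D_i) \to \mO_Y(D) \to \mO_{D_i}(D\cdot D_i) \to 0.\]
The right-hand term is a line bundle of strictly negative degree on the integral curve $D_i$, so its global sections vanish and the long exact sequence gives $h^0(D)=h^0(D-D_i)$. Iterating this and using $(D-jD_i)\cdot D_i = D\cdot D_i - jD_i^2$, the intersection with $D_i$ stays negative for every $j<\lc D\cdot D_i/D_i^2\rc$, so the argument can be repeated throughout the entire subtraction. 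Since each $D_i$ with $D\cdot D_i<0$ is a component of $D$ the set $\{D_i : D\cdot D_i<0\}$ is finite, and one applies the above to each in turn to obtain $h^0(D)=h^0(\ip_Y(D))$.

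For the stabilization statement, my approach would be to exploit Zariski decomposition. Write $D = P+N$ with $P$ a nef $\Q$-divisor and $N = \sum a_i E_i$ an effective $\Q$-divisor whose support has negative-definite intersection matrix; the support $\on{supp}(N)$ is a fixed finite collection of prime divisors $E_i$ with $E_i^2<0$. The key observation is that the divisors $D - \ip_Y^n(D)$ are all effective and, by induction on $n$, supported on $\on{supp}(N)$: at each step we subtract only negative prime divisors already lying in this support (as anything outside would contradict the uniqueness of Zariski decomposition applied to $\ip_Y^n(D)$). Hence the whole iteration takes place in the finite-dimensional lattice spanned by $\on{supp}(N)$.

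To force termination inside this lattice, I would track the intersection $\ip_Y^n(D) \cdot A$ against a fixed ample divisor $A$: this is a positive real sequence, bounded below by $P\cdot A$, which strictly decreases at every non-trivial iteration (each application subtracts at least one prime component of $N$, which pairs positively with $A$, by an integer multiple). Combined with the discreteness of the lattice of possible $\ip_Y^n(D)$, this forces the sequence to stabilize after finitely many steps at some $\ip_Y^\infty(D)$. By construction $\ip_Y^\infty(D)\cdot D_i\geq 0$ for every prime $D_i$, so $\ip_Y^\infty(D)$ is nef. The main obstacle is confirming that no prime divisor outside $\on{supp}(N)$ can ever acquire negative intersection along the iteration; this is precisely what the Zariski-decomposition framework controls, since the negative-definiteness of the intersection matrix on $\on{supp}(N)$ keeps the whole process trapped in that finite set of curves.
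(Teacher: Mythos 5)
This theorem is imported by the paper as \cite[Thm.~1.1~+~1.2]{phys} and not proved there, so your argument can only be judged on its own terms. Your first half is correct and is the standard argument: a prime $D_i$ with $D\cdot D_i<0$ satisfies $D_i^2<0$ and lies in the fixed part of $|D|$, the restriction sequence kills $H^0$ of the negative-degree quotient on the integral curve $D_i$, and the bookkeeping $(D-jD_i)\cdot D_i=D\cdot D_i-jD_i^2<0$ for $j<\lc D\cdot D_i/D_i^2\rc$ lets you iterate; you should also note that subtracting copies of $D_1$ only makes the intersection with a distinct $D_2$ more negative (since $D_1\cdot D_2\ge 0$), so the components can indeed be handled ``in turn.''

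The gap is in the second half, in the inductive claim that every curve meeting $\ip_Y^n(D)$ negatively lies in $\on{supp}(N)$. Your justification is circular: writing $\ip_Y^n(D)=P+(N-E_n)$ with $E_n=D-\ip_Y^n(D)$, you cannot invoke uniqueness of the Zariski decomposition of $\ip_Y^n(D)$ unless you already know $N-E_n$ is effective, i.e.\ that the transform never oversubtracts past $N$ --- which is essentially the claim being proved. If $N-E_n$ fails to be effective, a curve outside $\on{supp}(N)$ could a priori meet $\ip_Y^n(D)$ negatively, and your ``trapped in a finite lattice'' framing collapses. Fortunately the Zariski decomposition is an unnecessary detour: since $h^0(\ip_Y^n(D))=h^0(D)\ge 1$, every iterate is linearly equivalent to an effective divisor, hence $\ip_Y^n(D)\cdot A\ge 0$ for a fixed ample $\Z$-divisor $A$; each nontrivial step subtracts a nonzero effective $\Z$-divisor, so $\ip_Y^n(D)\cdot A$ drops by at least $1$ (as $A\cdot C\ge 1$ for every curve $C$). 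This terminates in at most $D\cdot A$ steps, and at a fixed point no curve has negative intersection, so the limit is a nef $\Z$-divisor. Replacing your second paragraph with this degree count closes the argument.
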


We will need to know what $\ip_Y$ does to area and index. For area, the answer is quite simple.

\begin{lemma} \label{lem:iso_area_pref} Let $D$ be effective and $A$ be ample. Then $A \cdot \ip_Y(D) \le A \cdot D$.
\end{lemma}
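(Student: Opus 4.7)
The plan is to show directly that $D-\ip_Y(D)$ is itself an effective divisor; intersecting with the ample class $A$ will then immediately give the inequality $A\cdot\ip_Y(D)\le A\cdot D$.

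By the definition \eqref{eqn:ip_def} of the isoparametric transform,
\[ D-\ip_Y(D) = \sum_{D\cdot D_i<0}\lc\frac{D\cdot D_i}{D_i^2}\rc D_i, \]
which is a $\Z$-linear combination of prime divisors. To prove effectivity it therefore suffices to check that each ceiling $\lc (D\cdot D_i)/D_i^2\rc$ is a positive integer, i.e.\ that the ratio $(D\cdot D_i)/D_i^2$ is strictly positive whenever $D\cdot D_i<0$.

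The key observation is the standard fact that if $D$ is effective and $D_i$ is a prime divisor satisfying $D\cdot D_i<0$, then $D_i$ must be one of the components of $D$ and, moreover, $D_i^2<0$. Indeed, any two distinct prime divisors meet nonnegatively, so if $D_i$ did not appear as a component of $D$ we would have $D\cdot D_i\geq0$. Thus we may write $D=m_iD_i+E$ with $m_i\geq 1$ and $E$ effective with $D_i\not\subseteq\on{supp}(E)$, which gives
\[ D\cdot D_i = m_iD_i^2 + E\cdot D_i \;\geq\; m_iD_i^2, \]
forcing $D_i^2<0$ since the left-hand side is negative. Consequently $(D\cdot D_i)/D_i^2>0$, and its ceiling is a positive integer.

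Therefore $D-\ip_Y(D)$ is a nonnegative $\Z$-linear combination of prime divisors, hence effective. Pairing against the ample $\R$-divisor $A$ and using $A\cdot C\geq 0$ for every effective curve class $C$ yields $A\cdot(D-\ip_Y(D))\geq 0$, which is the desired inequality. The only substantive step is verifying the sign of $D_i^2$, which as above is a direct consequence of the effectivity hypothesis on $D$; no further input is required.
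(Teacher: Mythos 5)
Your proof is correct and follows essentially the same route as the paper: both arguments reduce to showing that each coefficient $\lc (D\cdot D_i)/D_i^2\rc$ is positive because $D\cdot D_i<0$ together with effectivity of $D$ forces $D_i^2<0$, and then pairing the resulting effective difference $D-\ip_Y(D)$ with the ample class $A$. The only difference is that you spell out the standard decomposition $D=m_iD_i+E$ justifying $D_i^2<0$, which the paper simply asserts.
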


\begin{proof} If $D_i$ is a prime divisor with $D_i \cdot D < 0$ and $D$ is effective, then $D_i^2 < 0$. Thus the coefficients of the sum in (\ref{eqn:ip_def}) are positive. Since $A$ is ample, $A \cdot D_i < 0$. These two facts imply the result.
\end{proof}

The answer for the index is more complicated. For this, we need the following lemma.

\begin{lemma} \label{lem:iso_index_pref} Let $Y$ be a smooth surface with $D$ an effective divisor on $Y$. Suppose $C_1,\dots,C_n$ is a collection of curves intersecting $D$ negatively. Then either one of the $C_i$ is a $(-1)$-curve or
$$I(D')\geq I(D) \quad\text{where}\quad D'=D-\sum_{i=1}^n\lc\frac{D\cdot C_i}{C_i^2}\rc C_i$$
In particular, $I(\ip_Y(D))\geq I(D)$ if no $(-1)$-curve intersects $D$ negatively.
\end{lemma}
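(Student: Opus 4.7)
The plan is to expand $I(D')-I(D)$ directly, apply the adjunction formula on the smooth surface $Y$, and exhibit the resulting expression as a sum of manifestly non-negative contributions.

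First I would record the integer inequalities forced by the hypotheses. Because $D$ is effective and $D\cdot C_i<0$, each $C_i$ must appear in $\on{supp}(D)$ with positive coefficient, which forces $C_i^2<0$: writing $D=n_iC_i+E_i$ with $n_i>0$ and $E_i$ effective not containing $C_i$ gives $D\cdot C_i=n_iC_i^2+E_i\cdot C_i$ with $E_i\cdot C_i\geq 0$. Setting $x_i:=-D\cdot C_i$, $y_i:=-C_i^2$, and $a_i:=\lc x_i/y_i\rc$, these are positive integers, and integrality of the ceiling yields the key inequality $(a_i-1)y_i\leq x_i-1$.

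Next I would compute $I(D')-I(D)$ from the definitions. Expanding $D'=D-\sum_ia_iC_i$ via bilinearity and substituting the adjunction identity $K_Y\cdot C_i=2g_i-2-C_i^2$ (with $g_i:=p_a(C_i)\geq 0$), then collecting diagonal and off-diagonal contributions of the squared sum, yields
\[I(D')-I(D)=\sum_ia_i\bigl[\,2x_i-(a_i-1)y_i+2g_i-2\,\bigr]+\sum_{i\neq j}a_ia_j(C_i\cdot C_j).\]
The off-diagonal sum is $\geq 0$ since the $C_i$ are distinct prime divisors on the smooth surface $Y$, so $C_i\cdot C_j\geq 0$ for $i\neq j$. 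For each diagonal bracket the ceiling inequality gives
\[2x_i-(a_i-1)y_i+2g_i-2\ \geq\ 2x_i-(x_i-1)+2g_i-2\ =\ x_i+2g_i-1\ \geq\ 0,\]
using $x_i\geq 1$ and $g_i\geq 0$. Hence $I(D')\geq I(D)$, and the final clause follows by taking $\{C_i\}$ to be all prime divisors intersecting $D$ negatively, which is precisely the definition (\ref{eqn:ip_def}) of $\ip_Y(D)$.

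I expect the only real obstacle to be bookkeeping---applying adjunction correctly and tracking signs when the $C_i^2$ are negative---rather than anything conceptually deep. The essential ingredient is the ceiling inequality $(a_i-1)y_i\leq x_i-1$, which uses the integrality of $x_i,y_i,a_i$ to absorb the quadratic self-intersection $(a_i-1)y_i$ against the linear $2x_i$. Notably, the argument actually shows $I(D')\geq I(D)$ unconditionally, so the $(-1)$-curve alternative in the statement is automatically absorbed into the stronger second disjunct; the caveat presumably reflects how $(-1)$-curves are handled separately in the surrounding minimal-model arguments.
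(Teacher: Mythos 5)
Your proof is correct, and it takes a genuinely different route from the paper's. The paper argues by induction on the number of curves, treating one curve $C$ with $C^2=-r$ at a time: it rewrites $K_Y$ via the contraction $\pi\colon Y\to\ol{Y}$ of $C$ and the discrepancy $\tfrac{2-r}{r}$ (implicitly using $C\cdot K_Y=r-2$, i.e.\ that $C$ is a smooth rational curve), obtains only the strict bound $I(D')>I(D)-2$, and then invokes the evenness of $I(\cdot)$ to conclude $I(D')\geq I(D)$. Your argument expands $I(D')-I(D)$ for all curves simultaneously, uses adjunction with arbitrary arithmetic genus $p_a(C_i)\geq 0$, and makes each diagonal term $a_i\bigl[2x_i-(a_i-1)y_i+2g_i-2\bigr]$ non-negative outright via the ceiling inequality $(a_i-1)y_i\leq x_i-1$, with the off-diagonal terms $a_ia_j\,C_i\cdot C_j\geq 0$ handled for free. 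This buys several things: no induction, no parity trick, no contraction or discrepancy computation, validity for non-rational negative curves, and --- as you observe --- the conclusion $I(D')\geq I(D)$ holds even when some $C_i$ is a $(-1)$-curve, so the disjunction in the statement is subsumed (the paper excludes $(-1)$-curves only because they are handled by blow-down elsewhere in the proof of Theorem \ref{thm:sw_nef}). The one hypothesis you should make explicit is that the $C_i$ are distinct prime divisors, which is what justifies $C_i\cdot C_j\geq 0$, $p_a(C_i)\geq 0$, and the containment $C_i\subseteq\on{supp}(D)$; this matches the setting of the isoparametric transform in (\ref{eqn:ip_def}), where the sum runs over prime divisors.
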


\begin{proof} Suppose $n=1$ so that there is only one curve $C$. If $C^2=-1$ we are done, so let $C^2=-r$ for $r\geq2$. Let $D\cdot C=-\ell$ so that
$$D'=D-\lc\frac{\ell}{r}\rc C=:D-mC$$
Let $\pi\colon Y\to\ol{Y}$ be the contraction of $C$ to the singular surface $\ol{Y}$. We can compute
\begin{align*}
I(D')&=(D-mC)\cdot(D-mC-K_Y)\\
&=I(D)-2mD\cdot C+(-mC)\cdot(-mC-K_Y) \\
&=I(D)+2m\ell+(-mC)\cdot(-mC-\pi^*K_{\ol{Y}}-\frac{2-r}{r}C) \\
&=I(D)+2m\ell-m^2r-(2-r)m
\end{align*}
Now observe that $1>m-\frac{\ell}{r}\geq 0$ by definition and so $\ell+r>rm$. Furthermore, $r \ge 2$ and $m \ge 1$. Using these facts, we can compute the following lower bound.
\[2m\ell-mr(m+\frac{2-r}{r}) > 2m\ell-(\ell+r)(m+\frac{2-r}{r}) \]
\[= m\ell+\ell\cdot\frac{r-2}{r}-mr+r-2 \geq m\ell-r(m-1)-2  > (m-1)\ell-2 \geq -2\]
In particular, $I(D') > I(D) - 2$. However $I(\cdot)$ is even and so we must have $I(D')\geq I(D)$.

Now induct on the number of curves. Suppose the formula holds for a set of $n$ curves meeting an effective divisor negatively. Suppose curves $C_1,\dots,C_n,C$ intersect $D$ negatively. If any of the curves is a $(-1)$-curve then we are done. Assume not. Notate
$$D\cdot C=-\ell,\;\;\; C^2=-r,\;\;\;\lc\frac{D\cdot C}{C^2}\rc=m \;\;\text{and}\;\; F=\sum_{i=1}^{n-1}m_iC_i$$
so that $D'=D-F-mC$. Compute
\begin{align*}
I(D-F-mC)&= \\
&=I(D-F)+2mF\cdot C-2mD\cdot C+I(-mC) \\
&\geq I(D-F)+2m\ell-mr(m+\frac{2-r}{r}) \\
&>I(D-F)+(m-1)\ell-2 \\
&\geq I(D-F)-2
\end{align*}
where we used that $F\cdot C\geq0$ since $F$ is effective and supported away from $C$. By inductive assumption $I(D-F)\geq I(D)$ and so we have $I(D')>I(D)-2$. Since $I(\cdot)$ is even we can conclude that $I(D')\geq I(D)$ as desired.
\end{proof}

\begin{proof}[Proof of Thm.~\ref{thm:sw_nef}] We simply need to show that for any divisor in $\SW(Y)$, there exists a preferable nef divisor. In other words, we must construct a map
\[\cN_Y\colon\on{SW}(Y)\to\on{Nef}(Y)_\Z\]
taking a Seiberg--Witten nonzero divisor to a preferable nef $\Z$-divisor. We now construct these maps by induction on the number of blow ups necessary to make $Y$ from a minimal surface. 

\vspace{4pt}

For minimal rational surfaces the existence of an $\cN_Y$ is clear. In the cases of $\pr^2$ and $\pr^1\times\pr^1$, we have $\on{SW}(Y)=\on{Nef}(Y)_\Z$. Hirzebruch surfaces, on the other hand, have no $(-1)$-curves. Thus we can set $\cN_Y(D) = \ip_Y^n(D)$ for $n \gg 0$. Lemmas \ref{lem:iso_index_pref} and \ref{lem:iso_area_pref} imply that the result is preferable.

\vspace{4pt}

Now assume that such a function exists for all rational surfaces expressible as $b-1$ blowups of a minimal rational surface. Let $Y$ be a surface expressed as $b$ blowups of a minimal rational surface, and for any $(-1)$-curve $E\subseteq Y$ denote the contraction by $\pi_E\colon Y\to\ol{Y}_E$. We define $\cN_Y(D)$ by the following procedure.
\begin{enumerate}
\item[(a)] If $D\cdot C\geq0$ for all curves $C\subseteq Y$ then $D$ is nef and we define $\cN_Y(D)=D$.
\item[(b)] If $D\cdot E\leq 0$ for some $(-1)$-curve $E$, write $D=\pi_E^*\ol{D}+mE$ for some $\ol{D}\in\on{SW}(\ol{Y}_E)$ and for some $m\geq0$. The inductive hypothesis implies that there exists a nef $\Z$-divisor $\ol{D}_0$ preferable to $\ol{D}$. Define $\cN(D)=\pi_E^*\ol{D}_0$.
\item[(c)] If $D\cdot E>0$ for all $(-1)$-curves $E$ on $Y$ but $D\cdot C<0$ for some $(-r)$-curve $C$ with $r\geq 2$, recursively apply (a)-(c) to $\ip_Y(D)$ instead of $D$ and define $\cN_Y(D)$ as the result.
\end{enumerate}
This procedure terminates: if $\ip_Y^n(D)$ eventually intersects a $(-1)$-curve negatively then (b) outputs a nef divisor. If $\ip_Y^n(D)$ does not intersect a $(-1)$-curve nonpositively for any $n$ then after a finite number of steps we reach $\ip^\infty_Y(D)\in\on{Nef}(Y)_\Z$ by Theorem \ref{thm:phys}, which is returned by (a). Note that the application of Theorem \ref{thm:phys} is valid by Proposition \ref{prop:sw_in_ne}.

\vspace{4pt}

We claim that $\cN_Y(D)$ is nef and preferable to $D$. Indeed, all three steps (a)-(c) only improve the area and index constraints. This claim is trivial for (a) and follows from Lemmas \ref{lem:iso_area_pref} and \ref{lem:iso_index_pref} for (c). (b) produces a preferable nef $\Z$-divisor since $\pi_E^*\ol{D}$ is preferable to $D=\pi_E^*\ol{D}+mE$ from direct calculation (noting that $m\geq0$), and then $\pi_E^*\ol{D}_0$ is nef and preferable to $\pi_E^*\ol{D}$ since $\ol{D}_0$ is preferable to $\ol{D}$. \end{proof}

\section{Toric Surfaces} \label{sec:toric_surfaces} We now apply Theorem \ref{thm:main} to the study of embeddings into projective toric surfaces. We begin with a review of toric surfaces (\S \ref{subsec:toric_surfaces}) and toric domains (\S \ref{subsec:toric_domains}). We then demonstrate that the algebraic capacities on toric surfaces are uniquely characterized by a set of axioms (\S \ref{subsec:axioms_of_calg}). Finally, we discuss the main applications: obstructing embeddings of concave toric domains into toric surfaces, and monotonicity of the Gromov width under inclusion of moment polygons (\S \ref{subsec:emb_to_toric}).

\subsection{Toric varieties} \label{subsec:toric_surfaces} We start with a brief review of toric varieties. Our main reference is \cite{cls}.

\begin{definition} A \emph{(projective normal) toric variety} $Y$ of dimension $n$ over $\C$ is a projective normal variety with a $(\C^\times)^n$-action acting faithfully and transitively on a Zarisiki open subset of $Y$.
\end{definition}

Every toric variety $Y$ can be described (uniquely, up to isomorphism) by either a \emph{fan} $\Sigma \subset \R^n$ \cite[Def 3.1.2 and Cor 3.1.8]{cls} or a \emph{moment polytope} $\Omega \subset \R^n$ \cite[Def 2.3.14]{cls}. A fan $\Sigma$ for $Y$ can be recovered from a moment polytope $\Omega$ for $Y$ by passing to the \emph{inner normal fan} $\Sigma(\Omega)$ of $\Omega$ \cite[Prop 3.1.6]{cls}. We will focus on the polytope perspective, since it will be more important in this paper.

\begin{definition} \label{def:moment_polytope} A \emph{moment polytope} $\Omega \subset \R^n$ is a convex polytope with rational vertices and open interior. We denote the corresponding toric variety by $Y_\Omega$.
\end{definition}
\noindent Note that given a scalar $S > 0$ in $\Q$ or an affine map $T:\Z^2 \to \Z^2$, we can scale $\Omega$ to $S\Omega$ or apply $T$ to acquire $T\Omega$. There are naturally induced isomorphisms of varieties $Y_{S\Omega} \simeq Y_\Omega$ and $Y_{T\Omega} \simeq Y_\Omega$. 

\begin{definition} A \emph{smooth vertex} $v \in \Omega$ of a moment polytope is a vertex such that there exists a neighborhood  $U \subset \R_{\ge 0}^n$ of $0$, a neighborhood $V \subset \Omega$ of $v$, a scaling $S$ and a $\Z$-affine isomorphism $T$ such that $ST(U) = V$ and $ST(v) = 0$. Otherwise a vertex is \emph{singular}.
\end{definition}

\vspace{4pt}

On a projective toric variety, each face $F \subset \Omega$ determines a $\Q$-Cartier divisor $D_F$. Every torus invariant divisor is in the span of these divisors $D_F$, and every divisor class is represented by a torus-invariant divisor \cite[4.1.3]{cls}. Furthermore, every moment polytope $\Omega$ for a toric variety $Y_\Omega$ is associated to a natural divisor $A_\Omega$ given as a combination of these face divisors.

\begin{definition} \label{def:associated_ample} The \emph{associated divisor} $A_\Omega$ of the moment polytope $\Omega$ is defined as
\[A_\Omega = \sum_F a_F D_F\]
Here for each face $F \subset \Omega$, we define $u_F \in \Z^n$ and $a_F \in \Q$ by the following conditions.
\[\langle u_F,x\rangle = -a_F \text{ for }x \in F \qquad u_F \text{ is primitive in $\Z^n$, inward to $\Omega$ and normal to $F$}\]
Note that the equation $\langle u_F,x\rangle=-a_F$ defines a hyperplane that we denote by $\Pi_F$.
\end{definition}

\begin{lemma} The associated divisor $A_\Omega$ of a moment polytope $\Omega$ has the following properties.
\begin{itemize}
	\item[(a)] (Ample) $A_\Omega$ is an ample divisor, and so defines an projective embedding to projective space.
	\begin{equation} \label{eqn:associated_ample_embedding} |kD_\Omega|:Y_\Omega\to\pr H^0(Y_\Omega,kA_\Omega) \quad \text{ for }k \gg 0\end{equation}
	\item[(b)] (Translation/Scaling) Let $T \in \GL_n(\Z)$, $V \in \Z^n$ and $S \in \Q$. Then
	\[D_{T\Omega} = D_{\Omega} \qquad D_{\Omega + V} = D_\Omega + P_V \qquad D_{S\Omega} = S \cdot D_\Omega\]
	Here $P_V$ is a principle divisor depending on $V$.
\end{itemize}
\end{lemma}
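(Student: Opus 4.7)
The plan is to deduce (a) from the classical polytope-to-ample-divisor dictionary on projective toric varieties, and to verify (b) by direct computation from Definition \ref{def:associated_ample} together with the standard description of torus-invariant principal divisors as characters.

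For (a), the polytope $\Omega$ is by Definition \ref{def:moment_polytope} a full-dimensional rational polytope whose inner normal fan is precisely the fan of $Y_\Omega$, so the divisor $A_\Omega$ of Definition \ref{def:associated_ample} is the classical ``polytope divisor'' in the sense of \cite{cls}. Its piecewise-linear support function on $\Sigma(\Omega)$ equals $\psi_{A_\Omega}(v)=\min_{x\in\Omega}\langle v,x\rangle$, which is strictly convex across every interior wall because adjacent vertices of $\Omega$ are distinct. The toric Kleiman criterion then forces $A_\Omega$ to be ample, so some integer multiple $kA_\Omega$ is very ample and defines the embedding in (\ref{eqn:associated_ample_embedding}).

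For (b), each identity is a bookkeeping exercise in how the data $u_F$ and $a_F$ transform under the three operations, using only the defining equation $\langle u_F,x\rangle=-a_F$ on $F$ together with primitivity and inward-orientation of $u_F$. Under $S\in\Q_{>0}$, the face $SF$ of $S\Omega$ has primitive inner normal $u_F$ and $\langle u_F,Sx\rangle=-Sa_F$ forces $a_{SF}=Sa_F$, giving $A_{S\Omega}=SA_\Omega$. Under $T\in\GL_n(\Z)$, the face $TF$ of $T\Omega$ has primitive inner normal $(T^{-1})^\top u_F$ and the same offset $a_F$; combined with the canonical identification $Y_{T\Omega}\simeq Y_\Omega$ sending $D_{TF}\mapsto D_F$, this yields $A_{T\Omega}=A_\Omega$. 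For the translation by $V\in\Z^n$, the face $F+V$ keeps the normal $u_F$ but has new offset $a_{F+V}=a_F-\langle u_F,V\rangle$, hence
\[A_{\Omega+V}=A_\Omega-\sum_F\langle u_F,V\rangle D_F.\]
The correction term is precisely the principal divisor $\on{div}(\chi^V)$ of the character $\chi^V$ associated to the lattice point $V$, by the standard toric formula \cite{cls}, so we may take $P_V:=-\on{div}(\chi^V)$.

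There is no substantive obstacle in this lemma: part (a) is the classical polytope/ample correspondence cited as a black box, and part (b) is direct computation. The only place to be careful is with sign and duality conventions between the character lattice (where $\Omega$ and $V$ live) and the cocharacter lattice (where $u_F$ lives), but once these are pinned down the identities are immediate from Definition \ref{def:associated_ample}.
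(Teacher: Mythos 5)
Your proposal is correct and follows essentially the same route as the paper, which simply cites \cite[Prop.\ 6.1.10]{cls} for (a) and \cite[Ex.\ 4.2.5(a)]{cls} for the translation identity, and asserts that scaling and $\GL_n(\Z)$-invariance follow from Definition \ref{def:associated_ample}. You have merely written out the computations the paper leaves implicit (the transformation rules for $u_F$ and $a_F$, and the identification of the translation correction with $-\on{div}(\chi^V)$), and these are all accurate.
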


\begin{proof} For (a), see \cite[Prop 6.1.10]{cls}. For (b), see \cite[\S 4.2, Ex 4.2.5(a)]{cls} for the translation property. The scaling and linear map properties follow from Definition \ref{def:associated_ample}. \end{proof}

More generally, any $\mathbb{T}^n$-equivariant $\Q$-divisor $D=\sum a_FD_F$ is associated to a half-space arrangement consisting of the half-spaces $H_F$ and a (possibly empty) polytope $P_F$ given by
\[H_F =\{x \in \R:\langle u_F,x\rangle\geq-a_F\} \qquad P_F = \cap_F \Pi_F\]
The dimension of the space of sections $h^0(D)$ is given by the number of lattice points $|P_F \cap \Z^n|$ \cite[\S 7.1, p. 322]{cls}. A divisor is ample if and only if $\partial H_F \cap P_D$ is an open subset of $\partial H_F$ for each $F$, and nef if $\partial H_F \cap P_D$ is non-empty for each $F$.

\vspace{4pt}

We are primarily interested in \emph{toric surfaces} , i.e. projective toric varieties of complex dimension $2$. In this case, the embedding (\ref{eqn:associated_ample_embedding}) gives $Y$ the structure of a symplectic orbifold by restriction of the Kahler form on $\P^N$. Every toric surface is an orbifold \cite[Thm.~3.1.19]{cls} since every two-dimensional fan is simplicial (dually, every polygon is simple).

\subsection{Toric domains} \label{subsec:toric_domains} We next review the theory of toric domains. Let $\omega_{\text{std}}$ denote the standard symplectic form on $\C^n$ and let $\mu$ denote the standard moment map
\[\mu:\C^n \to \R^n_{\ge 0} \qquad (z_1,\dots,z_n) \mapsto (\pi|z_1|^2,\dots,\pi|z_n|^2)\]

\begin{definition} A \emph{toric domain} $(X_\Omega,\omega)$ is the inverse image $\mu^{-1}(\Omega)$ of a closed subset $\Omega \subset [0,\infty)^2$ with open interior, equipped with the symplectic form $\omega_{\text{std}}|_{X_\Omega}$ and moment map $\mu|_{X_\Omega}$.

\vspace{4pt}

A toric domain $X_\Omega$ is \emph{convex} if $\Omega = C \cap [0,\infty)^n$ where $C \subset \C^n$ is a convex and contains $0$ in its interior, \emph{concave} if the compliment $\R^2_+ \setminus \Omega$ is convex in $\C^n$ and \emph{free} if $\Omega$ is convex and contained in $\R_+^n \subset \R_{\ge 0}^n$ (i.e. disjoint from the coordinate axes). Finally, $\Omega$ is \emph{rational} if it is a moment polytope in the sense of Definition \ref{def:moment_polytope} (i.e. a polytope with rational vertices).
\end{definition}

A fundamental fact in this paper is that a convex rational domain $X$ can be compactified to toric surfaces $Y$ by collapsing the boundary $\partial Y$ so that it becomes the associated ample divisor $A$ of $Y$. More precisely, we have the following result.

\begin{lemma} Let $\Omega$ be a rational, convex domain polytope with toric variety $(Y_\Omega,A_\Omega)$ and toric surface $X_\Omega$. Then there is a $\mathbb{T}^n$-equivariant symplectomorphism
\[Y_\Omega \setminus \on{supp}(A_\Omega) \simeq \intr{X}_\Omega\]
\end{lemma}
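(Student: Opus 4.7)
The plan is to identify both $\intr{X}_\Omega$ and $Y_\Omega\setminus\on{supp}(A_\Omega)$ with a common model: $\intr{\Omega}\times T^n$ equipped with the symplectic form $\sum_j dr_j\wedge d\theta_j$ in action-angle coordinates $(r_j,\theta_j)$, with $T^n$ acting on itself by rotation and moment map given by projection to $\intr{\Omega}$. Composing the two identifications will yield the desired $\mathbb{T}^n$-equivariant symplectomorphism.

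For the domain side, since $\Omega\subset[0,\infty)^n$ has nonempty interior in $\R^n$, we have $\intr{\Omega}\subset(0,\infty)^n$, so $\intr{X}_\Omega=\mu^{-1}(\intr{\Omega})\subset(\C^\times)^n$. The polar change of coordinates $z_j=\sqrt{r_j/\pi}\,e^{i\theta_j}$ produces a $\mathbb{T}^n$-equivariant symplectomorphism from $(\intr{X}_\Omega,\omega_{\on{std}})$ to the model, under which $\mu$ becomes projection to $\intr{\Omega}$.

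For the variety side, observe that $\on{supp}(A_\Omega)=\bigcup_F D_F$ is the union of all $\mathbb{T}^n$-invariant prime divisors of $Y_\Omega$, i.e.\ precisely the complement of the open torus orbit. The orbifold singularities of $Y_\Omega$ all lie at torus-fixed points (preimages of vertices of $\Omega$), so the open orbit is smooth and $\omega_{A_\Omega}$ restricts there to a genuine $\mathbb{T}^n$-invariant Kähler form. The Atiyah-Guillemin-Sternberg / Guillemin-Abreu theory for symplectic toric manifolds then supplies a moment map $\nu$ for this $\mathbb{T}^n$-action together with an equivariant symplectomorphism of the open orbit with $\intr{P}\times T^n$ in action-angle form, where $P=\nu(Y_\Omega)$.

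The main obstacle is verifying that $P=\Omega$ on the nose, not merely up to $\Z^n$-translation. This is handled by the normalization built into Definition \ref{def:associated_ample}: the classical correspondence sending a $\mathbb{T}^n$-invariant $\Q$-divisor $\sum a_F D_F$ to the polytope $\{x:\langle u_F,x\rangle\geq-a_F\}$ is set up so that the polytope attached to $A_\Omega$ is $\Omega$ itself. The remaining bookkeeping is to match the two natural moment maps on $Y_\Omega$—one arising from the Fubini-Study pullback under $|kA_\Omega|$ and rescaled by $1/k$, the other from the toric construction attached to the polytope—which is a standard computation.
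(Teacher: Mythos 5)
There is a genuine gap: both of your identifications with the action--angle model $\intr{\Omega}\times T^n$ are wrong on the strata lying over the coordinate hyperplanes, and those strata are exactly where the content of the lemma lives. Since $\Omega$ is a convex \emph{domain} polytope, it contains a neighborhood of $0$ in $[0,\infty)^n$, so $\intr{X}_\Omega$ (the interior of $X_\Omega$ as a subset of $\C^n$) is \emph{not} $\mu^{-1}(\intr{\Omega})$: it also contains all points with some $z_j=0$ whose moment image lies on a face of $\Omega$ contained in a coordinate hyperplane. For the standard simplex, $\intr{X}_\Omega$ is the open ball $B^\circ(1)$, which contains the origin and is certainly not inside $(\C^\times)^n$. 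Symmetrically, $\on{supp}(A_\Omega)$ is \emph{not} the union of all invariant prime divisors: by Definition \ref{def:associated_ample} the faces $F$ lying on coordinate hyperplanes through the origin have $a_F=0$, so the corresponding $D_F$ are absent from $\on{supp}(A_\Omega)$. Again for $\P^2$, $\on{supp}(A_\Omega)$ is just the hyperplane at infinity, and $\P^2\setminus H\simeq\C^2$ is much larger than the open torus orbit. Your construction therefore produces a symplectomorphism only between two proper dense open subsets of the spaces in the statement.

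The missing step is extending that symplectomorphism over the degenerate orbits (the preimages of the coordinate hyperplanes), and this is not automatic from action--angle coordinates, which break down precisely there; it is the Delzant-type uniqueness problem for non-compact symplectic toric manifolds. The paper's proof handles this by first checking that both sides have the same moment image $\Omega^\circ=\Omega\setminus(\partial\Omega\cap\R_+^n)$ — note this set retains the faces on the coordinate hyperplanes, unlike $\intr{\Omega}$ — using exactly the observation about which $a_F$ vanish, and then invoking the Karshon--Lerman classification \cite{kl}, where the residual cohomological obstruction dies because $X^\circ_\Omega$ is contractible. If you want to salvage your approach you would need to supply such an extension argument (or cite a uniqueness theorem that applies to non-compact toric manifolds with the same moment image), at which point you are essentially reproducing the paper's proof.
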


\begin{proof} Let $\mu:Y_\Omega \to \R_{\ge 0}^n$ and $\nu:X_\Omega \to \R_{\ge 0}^n$ denote the moment maps of $Y_\Omega$ and $X_\Omega$. Define $\Omega^\circ$ to be the complement $\Omega \setminus (\partial\Omega \cap \R_+^n)$. First note that $\Omega^\circ$ is the moment image of both $Y_\Omega \setminus \text{supp}(A_\Omega)$ under $\mu$ and $\intr{X}_\Omega$ under $\nu$. For $\intr{X}_\Omega$ this is clear, and true for any convex domain. 

For $Y_\Omega$, write the associated ample divisor as $A_\Omega = \sum_F a_F \cdot D_F$. By examination of Definition \ref{def:associated_ample}, we see that $a_F = 0$ if and only if $F$ is on a plane passing through $0$. Since $\Omega = K \cap \R_{\ge 0}^n$ for some convex $K$, we know that $\Omega$ intersects each coordinate hyperplane $H_i = \{x \in \R^n|x_i = 0\}$ along a single face $F_i$ and every other face $F_i$ is not contained in a plane containing the origin (essentially by convexity). Thus $a_{F_i} = 0$ for each $i$ and $a_F \neq 0$ for every other face $F$. Thus $Y_\Omega \setminus \text{supp}(A_\Omega) = \mu^{-1}(\Omega^\circ)$.

\vspace{-15pt}
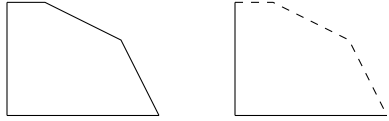
\begin{figure}[h]
\caption{Moment polytopes for $Y_\Omega$ and $Y_\Omega\setminus D_\Omega$}
\begin{center} \label{fig:poly}
\vspace{5pt}
\begin{tikzpicture}[scale=0.5]
\node (o) at (0,0){};
\node (a) at (4,0){};
\node (b) at (0,3){};

\node (v1) at (1,3){};
\node (v2) at (3,2){};
\node (v3) at (4,0){};

\draw (o.center) to (a.center);
\draw (o.center) to (b.center);

\draw (b.center) to (v1.center) to (v2.center) to (v3.center);


\node (o) at (0+6,0){};
\node (a) at (4+6,0){};
\node (b) at (0+6,3){};

\node (v1) at (1+6,3){};
\node (v2) at (3+6,2){};
\node (v3) at (4+6,0){};

\draw (o.center) to (a.center);
\draw (o.center) to (b.center);

\draw[dashed] (b.center) to (v1.center) to (v2.center) to (v3.center);
\end{tikzpicture}
\end{center}
\end{figure}

Now that we have shown that $X^\circ_\Omega$ and $Y_\Omega \setminus \text{supp}(A_\Omega)$ have the same moment images, we just apply an open version of Delzant's theorem, e.g. the result of Kershon-Lerman \cite[Thm 1.3]{kl}. Note that, in that result, there is a homological obstruction $\mathfrak{o}$ to the equivalence of two spaces with the same moment image
\[\mathfrak{o} \in H^2(X^\circ_\Omega;R) = H^2(Y_\Omega \setminus \text{supp}(A_\Omega);R)\]
for some abelian group $R$. This obstruction necessarily vanishes since $X^\circ_\Omega$ is contractible.
\end{proof}

Note that (essentially by definition) a moment polytope $\Omega \subset \R^n$ is equivalent to a convex, rational polytope $\R_{\ge 0}^n$ by scalings and $\GL_n(\Z)$-affine maps if and only if $\Omega$ has a smooth vertex.

\begin{example} Considering ellipsoids $X_\Omega=E(a,b)$ and the corresponding toric varieties $\pr(1,a,b)$, we recover the (well-)known compactifications
$$\pr^2\setminus H=B(1)^\circ\text{ and }\pr(1,a,b)\setminus H=E(a,b)^\circ$$
where $H=\mO(1)$ is a hyperplane section in each variety respectively.
\end{example}

\subsection{Axioms of $\calg$ for toric surfaces} \label{subsec:axioms_of_calg} This section is devoted to proving that the algebraic capacities of toric surfaces satisfy a set of important formal properties.

\begin{thm} \label{thm:axioms_of_calg} Let $Y_\Omega$ be a projective toric surface with moment polytope $\Omega$ and associated ample divisor $A_\Omega$. Then the $k$th algebraic capacity satisfies the following axioms.
\begin{itemize}
	\item[(a)] (Scaling/Affine Maps) If $S > 0$ is a constant and $T:\Z^2 \to \Z^2$ is an affine isomorphism, then
	\[\calg_k(Y_{S\Omega}, A_{S\Omega}) = S \cdot \calg_k(Y_\Omega,A_\Omega) \quad\text{and}\quad \calg_k(Y_{T\Omega}, A_{T\Omega}) = \calg_k(Y_\Omega,A_\Omega)\]
	\item[(b)] (Inclusion) If $\Omega \subset \Delta$ is an inclusion of moment polytopes, then
	\[\calg_k(Y_\Omega,A_\Omega) \le \calg_k(Y_\Delta,A_\Delta)\]
	\item[(c)] (Blow Up) If $\pi:Y_{\widetilde{\Omega}} \to Y_\Omega$ is a birational toric morphism with one exceptional fiber $E$ and associated ample divisor $A_{\widetilde{\Omega}} = \pi^*A_\Omega - \epsilon E$ for $\epsilon > 0$ small, then
	\[\calg_k(Y_{\widetilde{\Omega}},A_{\widetilde{\Omega}})\leq\calg_k(Y_\Omega,A_\Omega)\]
	\item[(d)] (Embeddings) If $X \subset \R^4$ be a star-shaped domain that symplecically embeds into $Y_\Omega$, then
	\[\cech_k(X) \le \calg_k(Y_\Omega,A_\Omega)\]
	\item[(e)] (Domains) If $\Omega$ is a (convex or free) domain polytope and $X_\Omega$ is the associated toric domain, then
	\[\cech_k(X_\Omega) = \calg_k(Y_\Omega,A_\Omega)\]
\end{itemize}
Furthermore, axioms (a)-(e) uniquely characterize the algebraic capacities $\calg_k$ on toric surfaces.
\end{thm}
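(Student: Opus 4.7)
My approach is to verify axioms (a)--(e) one at a time, leveraging Theorem \ref{thm:main_intro} for (d) and Theorem \ref{thm:bwo_main} for (e), and then to deduce uniqueness by reducing every polarized toric surface with a smooth vertex to a convex domain polytope, on which (e) pins down the value.

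Axioms (a), (c), (d), and (e) should be mostly immediate. For (a), the varieties $Y_{S\Omega}$ and $Y_{T\Omega}$ are canonically isomorphic to $Y_\Omega$ with $A_{S\Omega} = S\cdot A_\Omega$ and $A_{T\Omega}$ equal to $A_\Omega$ up to a principal divisor; the nef cone, the index $I(D) = D\cdot(D - K_Y)$, and the constraint $\chi(D)\geq k + \chi(\mathcal{O}_Y)$ are all preserved under these isomorphisms, so the infimum scales by $S$ or stays fixed. For (c), given any $D\in\on{Nef}(Y_\Omega)_\Z$ with $\chi(D)\geq k+\chi(\mathcal{O}_{Y_\Omega})$, the pullback $\pi^*D$ is nef with $\chi(\pi^*D) = \chi(D)$ (toric resolutions are rational, so $R^i\pi_*\mathcal{O} = 0$ for $i\geq 1$), and a projection-formula calculation gives
\[\pi^*D\cdot A_{\widetilde{\Omega}} = \pi^*D\cdot(\pi^*A_\Omega - \epsilon E) = D\cdot A_\Omega\]
since $\pi^*D\cdot E = 0$. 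Axiom (d) is exactly Theorem \ref{thm:main_intro}, while (e) for convex domain polytopes is Theorem \ref{thm:bwo_main} of \cite{bwo}; the free case should follow from the same lattice-point description of both sides.

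The real work lies in axiom (b), which I expect to be the main obstacle. My plan is to introduce a common toric refinement $Z$ of the fans of $Y_\Omega$ and $Y_\Delta$, equipped with toric birational morphisms $p\colon Z\to Y_\Omega$ and $q\colon Z\to Y_\Delta$. The inclusion $\Omega\subseteq\Delta$, translated via support functions on $\Sigma_Z$, says that $F := q^*A_\Delta - p^*A_\Omega$ is an effective torus-invariant $\Q$-Cartier divisor on $Z$. Given any feasible $D\in\on{Nef}(Y_\Delta)_\Z$, the pullback $q^*D$ is nef on $Z$ with the same Euler characteristic. The task is to construct from $q^*D$ a nef $\Z$-divisor $D'$ on $Y_\Omega$ that is preferable in the sense of Definition \ref{def:pref}: index-preferable so that $\chi(D')\geq k+\chi(\mathcal{O}_{Y_\Omega})$, and area-preferable with $D'\cdot A_\Omega\leq D\cdot A_\Delta$. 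The natural candidate is a suitable modification of $p_*(q^*D)$, with nefness enforced by iterating the isoparametric transform of Theorem \ref{thm:phys} (as already exploited in \S \ref{sec:alg_cap_bir}), and the area bound coming from the projection formula $p_*(q^*D)\cdot A_\Omega = q^*D\cdot p^*A_\Omega$ together with the effectiveness of $F$. The main technical point to verify is that the isoparametric adjustments on $Z$ descend compatibly to $Y_\Omega$ while preserving both the $\chi$-constraint and the area bound.

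For uniqueness, let $c$ be any function on polarized toric surfaces satisfying (a)--(e), and let $(Y_\Omega,A_\Omega)$ be a toric surface with a smooth vertex of $\Omega$. Applying (a) to translate that vertex to the origin, smoothness forces the two adjacent edges of $\Omega$ to lie along the coordinate axes, and convexity then yields $\Omega\subseteq\R_{\geq 0}^2$, realizing $\Omega$ as a convex domain polytope in the sense of \S \ref{subsec:toric_domains}. Axiom (e) then gives $c(Y_\Omega,A_\Omega) = \cech_k(X_\Omega) = \calg_k(Y_\Omega,A_\Omega)$. This establishes uniqueness on the class of toric surfaces relevant to the applications Theorems \ref{thm:app_1}--\ref{thm:app_3}; the remaining case of toric surfaces without any smooth vertex should be reached via a separate limiting argument combining (b) and (c) along a sequence of blow-ups approximating $\Omega$ by polytopes with smooth vertices.
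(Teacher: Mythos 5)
Your plan for (b) is where the proposal diverges most from the paper, and it is also where the genuine gap lies. The paper does essentially no algebraic geometry for (b): it applies an affine map so that both $\Omega\subset\Delta$ land in $(0,\infty)^2$ and become free polytopes, invokes axiom (e) (which, crucially, is stated for free polytopes as well as domain polytopes, via Theorem 4.18 of \cite{bwo}), and then uses monotonicity of $\cech_k$ under the inclusion of toric domains $X_\Omega\subset X_\Delta$ to get $\calg_k(Y_\Omega,A_\Omega)=\cech_k(X_\Omega)\le\cech_k(X_\Delta)=\calg_k(Y_\Delta,A_\Delta)$. Your alternative route through a common refinement $Z$ and the isoparametric transform is left incomplete precisely at the step you flag: $p_*(q^*D)$ need not be nef, and Lemma \ref{lem:iso_index_pref} only guarantees $I(\ip_Y(D))\ge I(D)$ when no $(-1)$-curve meets $D$ negatively, so you cannot simply ``iterate the isoparametric transform'' without the careful case analysis (contracting $(-1)$-curves separately) that the paper needs even in the proof of Theorem \ref{thm:sw_nef}. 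Moreover that machinery is developed for smooth surfaces, while $Y_\Omega$ here may be singular. As written, (b) is not proved; the fix is simply to use (a) and (e) as the paper does, noting that the free case of (e) is exactly what makes the reduction work when $\Omega$ does not contain a smooth vertex at the origin.

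Two smaller points. First, axiom (d) is not ``exactly Theorem \ref{thm:main_intro}'': that theorem assumes $Y$ smooth, whereas the toric surface $Y_\Omega$ here may be singular. The paper handles this by observing that the compact image of $X$ misses the singular fixed points, passing to a toric resolution $Y_{\widetilde\Omega}$ obtained by small corner cuts, and then applying (b) or (c); you need this extra step. Second, for (e) there is a bookkeeping issue you gloss over: Theorem \ref{thm:bwo_main} is stated with nef $\Q$-divisors and $h^0$, while $\calg_k$ uses nef $\Z$-divisors and $\chi$; reconciling these requires Demazure vanishing and a rounding argument (Lemmas \ref{lem:nefq_van} and \ref{lem:round_down_divisor}). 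Your treatment of (a), (c), and of uniqueness in the smooth-vertex case matches the paper's.
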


\begin{proof} We will need some of these properties to prove the others, so we must proceed in a particular order. We first prove (a), (c) and (e) which are mutually independent. We then apply these properties to acquire (b) and apply Theorem \ref{thm:main} to acquire (d).

\vspace{5pt}

\emph{(a) - Scaling/Affine Maps.} First, note that a toric domain transforms as $Y_{S\Omega} = Y_\Omega$ and the divisor transforms as $A_{S\Omega} = S \cdot A_\Omega$. So the scaling axiom follows from Definition \ref{def:alg_cap}. 

Next, we must show invariance if $T$ is either linear or a translation. If $T \in \text{GL}_2(\Z)$ is linear, then $T$ is an automorphism on the Lie algebra $\R^2 \simeq \mathfrak{t}^2$ of $\mathbb{T}^2$ induced by a group automorphism of $\mathbb{T}^2$. Thus $(Y_\Omega,A_\Omega)$ and $(Y_{T\Omega},A_{T\Omega})$ are identical after pulling back by this automorphism, and the algebraic capacities must agree. If $T$ is a translation then $Y_\Omega = Y_{T\Omega}$ and $A_\Omega = A_{T\Omega} + R$ where $R$ is a principle divisor determined by $T$. On the other hand, $A_\Omega \cdot D$ for a divisor $D$ depends only on the divisor class of $A_\Omega$, and so invariance follows from Definition \ref{def:alg_cap}.

\vspace{5pt}

\emph{(c) - Blow Up.} Let $D$ be a nef $\Q$-divisor on $Y$ that achieves the optimum defining $\calg_k(Y,A)$, i.e.
\[\calg_k(Y,A)=D\cdot A\text{ and }\chi(D)\geq k+1\]
Consider the proper transform $\pi^*D$ of $D$ on $\wt{Y}$, which is nef. This has $\chi(\pi^*D)=\chi(D)\geq k+1$. Therefore, the algebraic capacities satisfy
\[\calg_k(Y_{\widetilde{\Omega}},A_{\widetilde{\Omega}})\leq\pi^*D\cdot A_{\widetilde{\Omega}} = \pi^*D\cdot (\pi^*A_\Omega - \epsilon E) = D \cdot A_\Omega = \calg_k(Y_\Omega,A_\Omega)\]

\vspace{5pt}

\emph{(e) - Domains.} This is simply a restatement of Theorem 4.15 and Theorem 4.18 of \cite{bwo}, which state that if $\Omega$ is is a convex domain polytope or a convex free polytope, then
\begin{equation} \label{eqn:axiom_domain_pf} \cech_k(X_\Omega) = \inf_{D\in\on{nef}(Y_\Omega)_\Q}\{D\cdot A_\Omega:h^0(D) \ge k + 1\}\end{equation}
This result is phrased in terms of $\Q$-divisors, and also uses global sections instead of the Euler characteristic. However, since $Y_\Omega$ is toric we have Demazure vanishing.
\begin{lemma}[{\cite[Thm.~9.3.5.]{cls}}] \label{lem:nefq_van} Suppose $Y$ is a toric surface and $D$ is a nef $\Q$-divisor. Then
$$h^p(D) = 0\text{\textnormal{ for all $p>0$}}$$
\end{lemma}
\noindent Thus $h^0(D) = \chi(D)$. Moreover, we have the following Lemma (see \cite[Lem. 2.1]{bwr}).

\begin{lemma} \label{lem:round_down_divisor} Let $D$ be a nef $\Q$-divisor on $Y_\Omega$. Then there exists a nef $\Z$-divisor with
\[h^0(D') = h^0(D) \qquad A_\Omega \cdot D' \le A_\Omega \cdot D\]
\end{lemma}

\begin{proof} Without loss of generality assume $D$ is a torus-invariant divisor and let $D=\sum a_F D_F$. Consider the round-down of $D$, defined by
$$\lf D\rf:=\sum\lf a_F\rf D_F$$
which is a $\Z$-divisor with $P_D \cap \Z^n = P_{\lf D\rf} \cap \Z^n$. The difference $D-\lf D\rf$ is effective and so $\lf D\rf\cdot A\leq D\cdot A$. Unfortunately, $\lf D\rf$ may not be nef. 

\vspace{3pt}

To fix this, we modify $\lf D\rf$ to a nef divisor $D'$ by translating some of the hyperplanes $H_F = \{x|\langle u_F,x\rangle \ge -\lf a_F\rf\}$ (see \S \ref{subsec:toric_surfaces}) for $\lf D\rf$ inwards if necessary. (Here we are using the nef criterion discussed in \S \ref{subsec:toric_surfaces}.) This is equivalent to subtracting some integer multiple of the prime divisor $D_F$ and hence only reduces the area. We must also translate each hyperplane only until it meets a lattice point in $P_{\lf D\rf}$ for $\lf D\rf$, so that $h^0(D') = h^0(\lf D\rf)$. Note that every lattice point in $\Z^n$ is in one of the translates of $H_F$, for each $F$, so we can always perform this translation process while ensuring that $P_{D'} \cap \Z^n = P_{D} \cap \Z^n = P_{\lf D\rf} \cap \Z^n$. In particular, $h^0(D') = h^0(D)$.
\end{proof}

Lemmas \ref{lem:nefq_van} and \ref{lem:round_down_divisor} together imply that the following two infima are equal.
\[\inf_{D\in\on{nef}(Y_\Omega)_\Q}\{D\cdot A_\Omega:h^0(D) \ge k + 1\} = \inf_{D\in\on{nef}(Y_\Omega)_\Z}\{D\cdot A_\Omega:\chi(D) \ge k + \chi(\mathcal{O}_Y)\}\]
In view of (\ref{eqn:axiom_domain_pf}) and Definition \ref{def:alg_cap}, we conclude that $\cech_k(X_\Omega) = \calg_k(Y_\Omega,A_\Omega)$.

\vspace{5pt}

\emph{(b) - Inclusion.} Let $\Omega \subset \Delta$ be an inclusion of moment polytopes. By the application of an affine transformation $T:\Z^2 \to \Z^2$ to both $\Omega$ and $\Delta$, we may assume that $\Omega$ and $\Delta$ are in $(0,\infty)^2 \subset \R^2$, and thus are convex free polytopes. By (e) and the fact that $X_\Omega \subset X_\Delta$, we have
\[\calg_k(Y_\Omega,A_\Omega) = \cech_k(X_\Omega) \le \cech_k(X_\Delta) = \calg_k(Y_\Delta,A_\Delta)\]

\vspace{5pt}

\emph{(d) - Embeddings.} Let $X \to Y_\Omega$ be a symplectic embedding of a star-shaped domain. If $Y_\Omega$ has no singularities (i.e. no singular fixed points), this is simply Theorem \ref{thm:main}. Otherwise, since $X$ is a smooth and compact, its image misses the singular fixed points. Thus we can take a toric resolution $\pi:Y_{\widetilde{\Omega}} \to Y_\Omega$, where $\widetilde{\Omega}$ is acqurired from $\Omega$ by cutting off small triangles from the singular corners. For sufficiently small cuts, $Y_{\widetilde{\Omega}}$ inherits an embedding $X \to Y_{\widetilde{\Omega}}$ and thus we have
\[\cech_k(X) \le \calg_k(Y_{\wt{\Omega}},A_{\wt{\Omega}}) \le \calg_k(Y_\Omega,A_\Omega)\]
Here we apply either the blow up axiom (c) or the inclusion axiom (b). 

\vspace{5pt}

\emph{Uniqueness.} Finally, to argue that these axioms uniquely determine $\calg_k$, let $d^\text{alg}_k$ be another family of numerical invariants satisfying axioms (a)-(e). The blow up and inclusion axioms imply that $\calg_k$ and $d^\text{alg}_k$ agree if and only if they agree on all polytopes $\Omega$ such that $Y_\Omega$ is non-singular. Any such polytope is equivalent to a domain polytope by scaling and affind transformation, so by (a) we merely need to check those polytopes. Then (e) implies that the invariants must agree for those polytopes. \end{proof}

\begin{remark} Theorem \ref{thm:main} and the blow up property (c) can be used together to give an indendent proof of the upper bound of the ECH capacities by the algebraic capacities in Theorem 4.15 of \cite{bwo}. However, we are not aware of a proof that establishes a lower bound which is not essentially equivalent to the one provided in \cite{bwo}. A fundamentally different proof could potentially shed light on an approach to Conjecture \ref{conj:equality}.  
\end{remark}

\subsection{Embeddings to toric surfaces} \label{subsec:emb_to_toric} We now prove the main applications of the paper, which are easy consequences of the axioms in Theorem \ref{thm:axioms_of_calg}. We start by showing that the algebraic capacities are complete obstructions for embeddings of the interiors of concave toric domains into a toric surfaces, in terms of $\cech$ and $\calg$.

\begin{thm} \label{thm:concave_to_tor_surface} Let $X_\Delta$ be a concave toric domain and let $(Y_\Omega,A_\Omega)$ be a projective toric surface with a smooth fixed point. Then
\[X^\circ_\Delta \text{ symplectically embeds into }Y_\Omega \quad \iff \quad \cech_k(X_\Delta) \le \calg_k(Y_\Omega,A_\Omega)\]
\end{thm}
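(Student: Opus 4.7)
The plan is to prove the two directions separately. The forward implication is a direct consequence of axiom (d) of Theorem \ref{thm:axioms_of_calg}, while the reverse implication combines the axioms with the result of Cristofaro-Gardiner \cite{cg} on embeddings of concave toric domains into convex toric domains.

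For the forward direction, suppose $X^\circ_\Delta$ symplectically embeds into $Y_\Omega$. For any $r \in (0,1)$, the rescaled concave domain $X_{r\Delta}$ is a compact star-shaped domain whose closure lies inside $X^\circ_\Delta$, and hence it also embeds into $Y_\Omega$. Axiom (d) of Theorem \ref{thm:axioms_of_calg} then gives $\cech_k(X_{r\Delta}) \le \calg_k(Y_\Omega, A_\Omega)$, and by the scaling property of ECH capacities (Proposition \ref{prop:ECH_capacities_axioms}(b)) we have $\cech_k(X_{r\Delta}) = r \cdot \cech_k(X_\Delta)$. Letting $r \to 1$ yields the desired inequality.

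For the reverse direction, I would first exploit the smooth fixed point assumption. By axiom (a) applied to $\Omega$, we may apply a $\Z$-affine transformation (a translation composed with an element of $\GL_2(\Z)$) without changing any $\calg_k$, arranging the smooth vertex of $\Omega$ to lie at the origin with its two edges along the positive coordinate axes. Since $\Omega$ is convex, this forces $\Omega \subset \R^2_{\ge 0}$, so $\Omega$ becomes a convex rational domain polytope and $X_\Omega$ is a convex rational toric domain. The identification $X^\circ_\Omega \simeq Y_\Omega \setminus \mathrm{supp}(A_\Omega)$ established in Section \ref{subsec:toric_domains} then provides a symplectic embedding $X^\circ_\Omega \hookrightarrow Y_\Omega$.

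Next, axiom (e) yields $\cech_k(X_\Omega) = \calg_k(Y_\Omega, A_\Omega)$ for all $k$, so the hypothesis becomes $\cech_k(X_\Delta) \le \cech_k(X_\Omega)$. I would then invoke the theorem of Cristofaro-Gardiner \cite{cg}, which states that ECH capacities give a complete obstruction for embeddings of a concave toric domain into a convex toric domain, to produce an embedding $X^\circ_\Delta \hookrightarrow X_\Omega$. A routine compact exhaustion argument allows this embedding to be arranged so that its image lies inside $X^\circ_\Omega$, and composition with $X^\circ_\Omega \hookrightarrow Y_\Omega$ yields the desired $X^\circ_\Delta \hookrightarrow Y_\Omega$. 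The main obstacle I anticipate is the reduction step in which the smooth vertex hypothesis is used to arrange $\Omega$ as a convex domain polytope; once this geometric setup is justified, the remainder is a formal combination of the axioms of Theorem \ref{thm:axioms_of_calg} with the known embedding result from \cite{cg}.
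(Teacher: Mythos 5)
Your proposal is correct and follows essentially the same route as the paper: the forward direction via exhausting $X^\circ_\Delta$ by compact star-shaped domains and applying axiom (d), and the reverse direction by normalizing $\Omega$ to a convex domain polytope using the smooth vertex (the paper also allows a rational scaling $S$ here, consistent with its definition of smooth vertex), invoking axiom (e) to rewrite $\calg_k(Y_\Omega,A_\Omega)$ as $\cech_k(X_\Omega)$, and then applying Cristofaro-Gardiner's theorem together with the inclusion $X^\circ_\Omega \subset Y_\Omega$.
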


\begin{proof} Suppose that $X^\circ_\Delta \to Y_\Omega$ is a symplectic embedding, and let $X_i$ be an exhaustion of $X^\circ_\Delta$ by star-shaped domains. Then
\[\cech_k(X_\Delta) = \lim_{i \to \infty} \cech_k(X_i) \le \calg_k(Y_\Omega,A_\Omega)\] 

On the other hand, suppose that $\cech_k(X_\Delta) \le \calg_k(Y_\Omega,A_\Omega)$. Since $Y_\Omega$ has a torus fixed point, we can scale by an $S > 0$ and apply an affine map $T:\Z^2 \to \Z^2$ so that $TS(\Omega)$ is a convex domain polygon for convex toric domain $X_{TS(\Omega)}$. Applying axioms (a) and (e) of Theorem \ref{thm:axioms_of_calg}, we acquire
\[\cech_k(X_{S\Delta}) \le \calg_k(Y_{TS(\Omega)},A_{TS(\Omega)}) = \cech_k(X_{TS(\Omega)})\]
Now we apply a well-known result \cite[Thm. 1.2]{cg} of Cristofaro-Gardiner stating that a concave toric domain $X_{S\Delta}$ embeds into a convex toric domain $X_{TS(\Omega)}$ if and only if the ECH capacities of $X_{S\Delta}$ are bounded by those of $X_{TS(\Omega)}$. Thus we acquire a symplectic embedding
\[X_{S\Delta}^\circ \to X_{TS(\Omega)}^\circ \subset Y_{TS(\Omega)} \simeq Y_{S\Omega}\]
Since scaling the moment image merely scales the symplectic form accordingly, we thus acquire a symplectic embedding $X_\Delta^\circ \to Y_\Omega$. \end{proof}

\begin{cor} \label{co:gromov_monotonicity} Let $\Omega \subset \Delta$ be an inclusion of moment polygons, each of which has a smooth vertex. Then the Gromov widths satisfy
\[c_G(Y_\Omega) \le c_G(Y_\Delta)\]
In particular, $c_G$ is monotonic with respect to inclusions of the moment polytope for smooth toric surfaces.
\end{cor}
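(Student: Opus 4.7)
The plan is to reduce the monotonicity of the Gromov width to a direct combination of two earlier results, both of which are now in hand: Theorem \ref{thm:concave_to_tor_surface} (the sharp obstruction for embeddings of concave toric domains into toric surfaces) and the inclusion axiom (b) of Theorem \ref{thm:axioms_of_calg}. The key observation is that a standard symplectic ball $B(r) \subset \C^2$ is itself a concave toric domain, with moment image the triangle with vertices $(0,0)$, $(\pi r^2,0)$, $(0,\pi r^2)$. Therefore Theorem \ref{thm:concave_to_tor_surface} applies to $B(r)$, and the Gromov width of any toric surface $Y_\Sigma$ with a smooth fixed point is governed by a comparison of ECH capacities against the algebraic capacities of $Y_\Sigma$.

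Concretely, I would first fix $r$ such that $B(r)$ symplectically embeds into $Y_\Omega$. Passing to $B(r)^\circ$ and using the ``only if'' direction of Theorem \ref{thm:concave_to_tor_surface} (applied to the concave toric domain $B(r)$ and the toric surface $Y_\Omega$, which has a smooth fixed point by hypothesis), we deduce
\[
\cech_k(B(r)) \leq \calg_k(Y_\Omega, A_\Omega) \qquad \text{for every } k \geq 1.
\]
Next, I would apply axiom (b) of Theorem \ref{thm:axioms_of_calg} to the inclusion $\Omega \subset \Delta$ to obtain
\[
\calg_k(Y_\Omega, A_\Omega) \leq \calg_k(Y_\Delta, A_\Delta) \qquad \text{for every } k \geq 1.
\]
Chaining these two inequalities and invoking the ``if'' direction of Theorem \ref{thm:concave_to_tor_surface} for $Y_\Delta$ (again using that $\Delta$ has a smooth vertex), we obtain a symplectic embedding $B(r)^\circ \hookrightarrow Y_\Delta$.

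The final step is a standard exhaustion argument: since $B(r)^\circ \hookrightarrow Y_\Delta$ symplectically, any $B(r') \hookrightarrow B(r)^\circ$ with $r' < r$ also embeds into $Y_\Delta$. Taking the supremum over all such $r'$ yields $c_G(Y_\Delta) \geq \pi r^2$ whenever $B(r) \hookrightarrow Y_\Omega$, and taking the supremum over $r$ on the left gives $c_G(Y_\Omega) \leq c_G(Y_\Delta)$. There is no real obstacle here: the work has been done in Theorem \ref{thm:concave_to_tor_surface} and Theorem \ref{thm:axioms_of_calg}(b), and the mild subtlety about open versus closed balls is handled by the exhaustion. The same template (swapping $B(r)$ for $X_{r\Xi}$) proves the $\Xi$-width version in Corollary \ref{cor:Xi_width_monotonicity} verbatim.
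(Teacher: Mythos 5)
Your proposal is correct and follows essentially the same route as the paper: use the inclusion axiom (b) of Theorem \ref{thm:axioms_of_calg} to chain $\cech_k(B(r)) \le \calg_k(Y_\Omega,A_\Omega) \le \calg_k(Y_\Delta,A_\Delta)$ and then invoke the converse direction of Theorem \ref{thm:concave_to_tor_surface} to re-embed the (open) ball into $Y_\Delta$. The only cosmetic difference is that you obtain the first inequality from the ``only if'' direction of Theorem \ref{thm:concave_to_tor_surface}, whereas the paper cites the embedding axiom (d) directly---these are equivalent, since the former is proved from the latter.
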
 

\begin{proof} Let $B(r) \to Y_\Omega$ be a symplectic embedding of a closed ball of symplectic radius $r$. Then by the embedding axiom and inclusion axiom in Theorem \ref{thm:axioms_of_calg}, we have 
\[\cech_k(B(r)) \le \calg_k(Y_\Omega,A_\Omega) \le \calg_k(Y_\Delta,A_\Delta)\]
Thus by Theorem \ref{thm:concave_to_tor_surface}, we have an embedding $B^\circ(r) \to Y_\Delta$ of the open ball of symplectic radius $r$, so $r \le c_G(Y_\Delta)$. Taking the sup over all such embeddings $B(r) \to Y_\Omega$ yields $c_G(Y_\Omega) \le c_G(Y_\Delta)$.
\end{proof}

In fact, we can prove a more general result than Corollary \ref{co:gromov_monotonicity}. Namely, given a moment image $\Xi$ for a concave toric domain and a symplectic manifold $Y$, define the \emph{$\Xi$-width} $c_\Xi(Y)$ by
\[c_\Xi(Y) := \text{sup} \{r \; : \; X_{r\Xi} \text{ embeds symplectically into }Y\}\]
Then by the same argument as in Corollary \ref{co:gromov_monotonicity}, we have the following result.

\begin{cor} \label{cor:Xi_width_monotonicity} Let $\Omega \subset \Delta$ be an inclusion of moment polygons, each of which has a smooth vertex. Then
\[c_\Xi(Y_\Omega) \le c_\Xi(Y_\Delta)\]
\end{cor}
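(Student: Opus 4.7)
The plan is to adapt the argument of Corollary \ref{co:gromov_monotonicity} essentially verbatim, substituting the ball $B(r)$ with the concave toric domain $X_{r\Xi}$. The two external inputs that do all the real work are Theorem \ref{thm:concave_to_tor_surface} (used in both directions) and the inclusion axiom (b) of Theorem \ref{thm:axioms_of_calg}; the rest is bookkeeping between closed and open domains.

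First, I would fix an arbitrary $r > 0$ with $X_{r\Xi}$ symplectically embedding into $Y_\Omega$. Since $Y_\Omega$ has a smooth vertex and $X_{r\Xi}$ is a concave toric domain, the forward implication of Theorem \ref{thm:concave_to_tor_surface} (applied to any star-shaped exhaustion of $X_{r\Xi}^\circ$, or directly by the same exhaustion argument used in the proof of that theorem) gives
\[
\cech_k(X_{r\Xi}) \le \calg_k(Y_\Omega, A_\Omega) \quad \text{for every } k \ge 1.
\]
Next, I would apply the inclusion axiom (b) of Theorem \ref{thm:axioms_of_calg} to the inclusion $\Omega \subset \Delta$ to get $\calg_k(Y_\Omega, A_\Omega) \le \calg_k(Y_\Delta, A_\Delta)$ for all $k$, so that
\[
\cech_k(X_{r\Xi}) \le \calg_k(Y_\Delta, A_\Delta) \quad \text{for every } k \ge 1.
\]

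Since $Y_\Delta$ also has a smooth vertex, the reverse implication of Theorem \ref{thm:concave_to_tor_surface} now yields a symplectic embedding $X_{r\Xi}^\circ \hookrightarrow Y_\Delta$. This open embedding restricts to a (closed) symplectic embedding $X_{r'\Xi} \hookrightarrow Y_\Delta$ for every $r' < r$, and therefore $c_\Xi(Y_\Delta) \ge r'$ for all $r' < r$. Taking the supremum first over such $r'$ and then over $r < c_\Xi(Y_\Omega)$ produces the desired inequality $c_\Xi(Y_\Omega) \le c_\Xi(Y_\Delta)$.

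There is no genuine obstacle here: with Theorem \ref{thm:concave_to_tor_surface} and Theorem \ref{thm:axioms_of_calg} in hand, the corollary is a formal consequence. The only care needed is the passage between $X_{r\Xi}$ and $X_{r\Xi}^\circ$ (handled by shrinking $r$ slightly) and the use of the smooth-vertex hypothesis on \emph{both} polygons, which is what allows us to invoke Theorem \ref{thm:concave_to_tor_surface} for $Y_\Omega$ and $Y_\Delta$ respectively.
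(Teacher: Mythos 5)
Your proposal is correct and is essentially the paper's own argument: the paper proves Corollary \ref{cor:Xi_width_monotonicity} by running the proof of Corollary \ref{co:gromov_monotonicity} verbatim with $B(r)$ replaced by $X_{r\Xi}$, i.e.\ the embedding/inclusion axioms of Theorem \ref{thm:axioms_of_calg} followed by the reverse direction of Theorem \ref{thm:concave_to_tor_surface}. Your extra care with the open-versus-closed domain passage (shrinking $r$ to $r'$) is a minor refinement of a point the paper glosses over, not a different route.
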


\begin{remark} It seems that one can also execute the proof of Corollary \ref{cor:Xi_width_monotonicity} using only the fact that a ball $B(r)$ embeds into $X_\Omega$ if and only if it embeds into $Y_\Omega$ (see \cite[Thm~1.2]{chmp}) and the inclusion axiom (b) of Theorem \ref{thm:axioms_of_calg}. However, this would not cover any singular surfaces, and furthermore the stronger Corollary \ref{cor:Xi_width_monotonicity} requires the results of this paper. \end{remark}

A consequence of Theorem \ref{thm:concave_to_tor_surface} is that the $\Xi$-width of a convex toric domain $X_\Omega$ where $\Omega$ has rational slopes agrees with the $\Xi$-width of the toric surface $Y_\Omega$.

\begin{cor} \label{cor:xi_width_agree} Suppose $\Omega$ is a convex domain with rational slopes. Then
$$c_\Xi(X_\Omega)=c_\Xi(Y_\Omega)$$
\end{cor}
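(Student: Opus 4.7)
The plan is to prove the equality by establishing the two inequalities separately. The inequality $c_\Xi(X_\Omega)\leq c_\Xi(Y_\Omega)$ is essentially tautological: by the Lemma of \S\ref{subsec:toric_domains}, the interior $X_\Omega^\circ$ is equivariantly symplectomorphic to $Y_\Omega\setminus\on{supp}(A_\Omega)\subseteq Y_\Omega$. Given any embedding $X_{r\Xi}\hookrightarrow X_\Omega$, rescaling produces an embedding $X_{r'\Xi}\hookrightarrow X_\Omega^\circ\hookrightarrow Y_\Omega$ for every $r'<r$. Taking the supremum over such $r'$ and then over admissible $r$ gives the desired bound.

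For the reverse inequality $c_\Xi(Y_\Omega)\leq c_\Xi(X_\Omega)$, suppose we have a symplectic embedding $X_{r\Xi}\hookrightarrow Y_\Omega$. Since $\Xi$ is the moment image of a concave toric domain, $X_{r\Xi}$ is star-shaped about the origin, so axiom (d) of Theorem \ref{thm:axioms_of_calg} applies to yield $\cech_k(X_{r\Xi})\leq\calg_k(Y_\Omega,A_\Omega)$ for every $k$. Because $\Omega$ is a convex domain polytope (containing the origin as a smooth vertex, since it sits inside $[0,\infty)^2$) with rational slopes, axiom (e) of Theorem \ref{thm:axioms_of_calg} gives $\calg_k(Y_\Omega,A_\Omega)=\cech_k(X_\Omega)$. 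Chaining these inequalities produces
\[\cech_k(X_{r\Xi})\leq\cech_k(X_\Omega)\quad\text{for all }k\geq 0.\]

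Finally, I would invoke the sharpness theorem of Cristofaro-Gardiner \cite[Thm.~1.2]{cg}, which says that a concave toric domain embeds into (the interior of) a convex toric domain precisely when its ECH capacities are dominated. Applied to $X_{r'\Xi}$ and $X_\Omega$ with $r'<r$ (noting $\cech_k(X_{r'\Xi})\leq\cech_k(X_{r\Xi})\leq\cech_k(X_\Omega)$ by continuous scaling of the concave toric domain), this provides embeddings $X_{r'\Xi}\hookrightarrow X_\Omega$ for all $r'<r$. Hence $c_\Xi(X_\Omega)\geq r'$ for every $r'<r$, which by taking suprema yields $c_\Xi(X_\Omega)\geq c_\Xi(Y_\Omega)$ and completes the proof.

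The only subtle point I anticipate is verifying that axiom (e) of Theorem \ref{thm:axioms_of_calg} remains valid when $\Omega$ has rational slopes but possibly irrational vertices (so $A_\Omega$ is a genuine $\R$-divisor rather than a $\Q$-divisor). This should be handled either by inspecting that the proof of axiom (e), which ultimately rests on Theorems 4.15 and 4.18 of \cite{bwo}, is already set up for $\R$-coefficients, or by an approximation argument: approximate $\Omega$ by a sequence of rational-vertex polygons $\Omega_n\to\Omega$ and pass to the limit, using continuity of both $\cech_k$ and $\calg_k$ under such rescalings (guaranteed by the scaling and inclusion axioms in Proposition \ref{prop:ECH_capacities_axioms} and Theorem \ref{thm:axioms_of_calg}(a)--(b)).
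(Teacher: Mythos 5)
Your argument is correct and is essentially the paper's intended one: the paper states this corollary as an immediate consequence of Theorem \ref{thm:concave_to_tor_surface} (whose proof is exactly your chain of axiom (d), axiom (e), and Cristofaro-Gardiner's sharpness theorem, with the same $r'<r$ limiting device), and your easy direction via $X_\Omega^\circ\simeq Y_\Omega\setminus\on{supp}(A_\Omega)$ is the same tautology the paper relies on. The subtlety you flag about irrational vertices is real but is not addressed in the paper either, and your proposed approximation/scaling remedy is the standard fix.
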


\subsection{Gromov width and lattice width}

We use Corollary \ref{co:gromov_monotonicity} to provide a combinatorial upper bound for the Gromov width of a toric surface as conjectured in \cite{ahn}. We recall the definition of the lattice width.

\begin{definition} The \emph{lattice width} $w(\Omega)$ of a moment polytope is defined by
\[w(\Omega) := \underset{l \in \Z^n \setminus 0}{\text{min}}\Big( \underset{p,q \in \Omega}{\text{max}} \; \langle l,p - q\rangle\Big)\]
\end{definition}

\begin{cor} \label{cor:gromov_combo_bound} Let $\Omega$ be a moment polygon with a smooth vertex. Then $c_G(X_\Omega) \le w(\Omega)$.
\end{cor}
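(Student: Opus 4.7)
\medskip

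\noindent\textbf{Proof proposal.} The plan is to reduce the estimate to the monotonicity statement of Corollary \ref{co:gromov_monotonicity} applied to an explicit enclosing rectangle, whose associated toric surface is $\P^1 \times \P^1$ with an explicit symplectic form.

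First, let $w := w(\Omega)$ and let $l \in \Z^2 \setminus 0$ realize the minimum in the definition of the lattice width; we may assume $l$ is primitive, since replacing $l$ by $l/\gcd$ only decreases $\max_{p,q \in \Omega}\langle l, p-q \rangle$. A primitive vector can be extended to a $\Z$-basis of $\Z^2$, so there is a $T \in \GL_2(\Z)$ sending $l$ to $e_2$. Since $Y_{T\Omega + v} \simeq Y_\Omega$ as symplectic manifolds, smooth vertices are preserved by $\Z$-affine maps, and both $c_G(Y_\Omega)$ and $w(\Omega)$ are invariant under such maps, I may assume after this reduction that $\Omega \subset \R \times [0, w]$ and that $w = \max_{p,q \in \Omega}\langle e_2, p-q\rangle$.

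Since $\Omega$ is a bounded polygon with rational vertices, pick integers $a \le b$ so that $\Omega \subset [a,b] \times [0, w]$ and set $\Delta := [a,b] \times [0, w]$. The direction $e_1$ gives $\max_{p,q \in \Omega}\langle e_1, p-q\rangle \le b - a$, and because $w(\Omega) = w$ is a minimum, we must have $b - a \ge w$. The rectangle $\Delta$ is Delzant (all four corners are smooth in the sense of the paper), and its associated polarized toric surface is $(\P^1 \times \P^1, A_\Delta)$ where $A_\Delta$ gives the two $\P^1$-factors symplectic areas $b-a$ and $w$, respectively. The classical Gromov width of this polarized $\P^1\times\P^1$ equals $\min(b-a, w) = w$; I would justify this either by invoking non-squeezing together with the standard embedding of a ball into the polydisk $P(b-a, w) = X_\Delta$, or equivalently by noting that $X_\Delta$ is a convex toric domain with rational slopes and applying Corollary \ref{cor:xi_width_agree} (with $\Xi$ the standard simplex) to get $c_G(Y_\Delta) = c_G(X_\Delta) = w$.

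With this in hand, Corollary \ref{co:gromov_monotonicity} applied to the inclusion $\Omega \subset \Delta$ of moment polygons with smooth vertices gives
\[
c_G(Y_\Omega) \;\le\; c_G(Y_\Delta) \;=\; w \;=\; w(\Omega),
\]
which is the desired bound. The main technical point to watch out for is the preservation of the ``smooth vertex'' hypothesis across each reduction: after the $\Z$-affine change of coordinates $\Omega$ still has a smooth vertex (since $\GL_2(\Z)$ and translations preserve smoothness of vertices), and $\Delta$ automatically has smooth vertices as an axis-aligned rectangle. The only substantive non-combinatorial input is the Gromov width of the polarized $\P^1\times\P^1$, which is both classical and (via Corollary \ref{cor:xi_width_agree}) a direct consequence of the axioms developed earlier in the paper; I expect no other obstacle.
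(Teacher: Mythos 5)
Your proposal is correct and follows essentially the same route as the paper's proof: reduce by a $\GL_2(\Z)$ transformation so the width direction becomes a coordinate direction, enclose $\Omega$ in an axis-aligned rectangle, apply the monotonicity of Corollary \ref{co:gromov_monotonicity}, and evaluate the Gromov width of the resulting $\P^1\times\P^1$. The only (harmless) differences are that you use the exact bounding box and observe $b-a\ge w$ directly, where the paper simply takes a very tall rectangle, and that you spell out the justification of $c_G(Y_\Delta)=w$ via non-squeezing or Corollary \ref{cor:xi_width_agree}, which the paper asserts without comment.
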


\begin{proof} We implement the heuristic argument in \cite[Rmk 3.13]{ahn} rigorously. Let $l \in \Z^2 \setminus 0$ be the vector such that 
\[w(\Omega) = \sup_{p,q \in \Omega} |\langle l,p - q\rangle|\]
We can choose an element $A \in \GL_2(\Z)$ such that $(A^T)^{-1}(l) = e = (1,0)$ is the $x$-basis vector. This implies that
\[\langle e,A(p - q)\rangle = \langle (A^{-1})^Tl,A(p - q)\rangle = \langle l, p - q\rangle = w(\Omega) = w(A\Omega) \]
Thus the lattice width of $A\Omega$ is achieved in the direction of $e$. We can thus fit $A\Omega$ in a rectangle $R$ of width $a_1 = w(\Omega)$ and very large height $a_2 \gg a_1$. Since $A \Omega \subset R$, we apply Corollary \ref{cor:gromov_combo_bound} to acquire the inequality
\[c_G(Y_{\Omega}) = c_G(Y_{A\Omega}) \le c_G(Y_{R})\]
On the other hand, $Y_R \simeq \P^1(a_1) \times \P^1(a_2)$ and since $a_2 \gg a_1$, we have that $c_G(Y_R) = a_1 = w(\Omega)$.
\end{proof}

\end{document}